\documentclass[11pt]{amsart}
\usepackage{amsmath}
\usepackage{amsthm}
\usepackage{amssymb}
\usepackage{mathrsfs}
\usepackage{ifthen}
\usepackage{graphicx}
\usepackage[T1]{fontenc} %skandit
\usepackage{enumerate}
\usepackage{hyperref}
\usepackage{float}
\usepackage{tikz}
\usepackage{enumitem}
\usepackage{fullpage}
\usepackage{enumerate}
\usepackage{enumitem}
\usepackage{xcolor}
\usepackage{todonotes}
\usepackage{multicol}
\usepackage{setspace}
 \nonstopmode \numberwithin{equation}{section}

\theoremstyle{plain}

\newtheorem{Thm}{Theorem}[section]
\newtheorem{cor}[Thm]{Corollary}

\newtheorem{rem}[Thm]{Remark}

\theoremstyle{definition}

\newtheorem{defn}[Thm]{Definition}

\newtheorem{matrixcondition}[Thm]{Condition}

\newcommand{\Tr}{\operatorname{Tr}}

\allowdisplaybreaks

\title{Spectrum of Random Matrices with
Exploding Moments}
\author{Indrajit Jana}
\address{Indian Institute of Technology, Bhubaneswar}
\email{ijana@iitbbs.ac.in}
\thanks{Indrajit Jana - \textit{Email: ijana@iitbbs.ac.in;}
}

\author{Sunita Rani}
\address{Indian Institute of Technology, Bhubaneswar}
\email{s21ma09007@iitbbs.ac.in}
\thanks{Sunita Rani - \textit{Email: s21ma09007@iitbbs.ac.in;} (Corresponding author).
}

\begin{document}
\date{}

\begin{abstract} We study the central limit theorem (CLT) for linear eigenvalue statistics of several types of matrix models, whose entries are having exploding moments, i.e., moments of the entries are increasing with the size of the matrix. In particular, we study elliptic, centrosymmetric, circulant, and inter-correlated block matrices. The CLTs are established using asymptotic Wick formula. 

Keywords: exploding moment, central limit theorem, elliptic, circulant, centrosymmetric, inter-correlated block

MSC2020 Subject Classification: 15B52; 60BXX; 60FXX
\end{abstract}

\maketitle

%%%%%%%%%%%%%%%%%%%%%%%%%%%%%%%%%%%%%%%%%%%%%%%%%%%%%%%%%%%%%%%%%%%%%%%%%%%
%%%%%%%%%%%%%%%%%%%%%%%%%%%%  Introduction  %%%%%%%%%%%%%%%%%%%%%%%%%%%%%%%
%%%%%%%%%%%%%%%%%%%%%%%%%%%%%%%%%%%%%%%%%%%%%%%%%%%%%%%%%%%%%%%%%%%%%%%%%%%

\section{Introduction} 
%%%%%%%%%%%%%%%%%%%%%%%%%%%%%%%%%%%%%%%%%%%%%%%%%%%%%%%%%%%%%%%%%%%%%%%%%%%
%%%%%%%%%%%%%%%%%%%%%%%%%%%%  Introduction  %%%%%%%%%%%%%%%%%%%%%%%%%%%%%%%
%%%%%%%%%%%%%%%%%%%%%%%%%%%%%%%%%%%%%%%%%%%%%%%%%%%%%%%%%%%%%%%%%%%%%%%%%%%

We consider different types of random matrix models with exploding moments. We study the asymptotic behavior of normalized traces and the fluctuations of the associated linear eigenvalue statistic for monomial test functions. In particular, we prove a central limit theorem (CLT) for these generalized moments under suitable growth conditions. It is observed that although Gaussian fluctuations may still occur, the normalization and the variance depend on the rate at which the moments increase. The CLT is proved by establishing an asymptotic Wick formula for the product of linear eigenvalue statistics. 

We carry out this analysis for several structured random matrix models, including elliptic, centrosymmetric, circulant, inter-correlated block matrices,  %elliptic, inter-correlated, centrosymmetric, and circulant matrices, 
whose CLTs were established in \cite{o2016central, jana2024spectrum, adhikari2018universality, jana2025cltles} when the entries of the matrices were light-tailed. We believe that the techniques used for circulant matrices with exploding moments can be utilized to prove CLT results for some other types of structured random matrices with exploding moments, such as reverse circulant, symmetric circulant, Toeplitz, and Hankel matrices.

Fluctuation results for heavy-tailed random matrices have been studied in various frameworks; see, for example, \cite{BenaychGeorgesGuionnet2014, male2017limiting, benaych2016fluctuations}. Our proof relies on the techniques demonstrated by Male et al. \cite{male2017limiting, benaych2014central}, where they established CLT for exploding moment and heavy tailed Wigner type random matrices. However, when the matrix model is changed from Wigner to other types such as Elliptic, Centrosymmetric, the internal combinatorics to establish Wick's formula ---  the central part of the CLT proof --- changes significantly. Moreover, it also impacts the final covariance kernel. Besides, we use a slightly generalized definition of exploding moment, where the rate of explosion is controlled by an additional parameter $\alpha>0.$ Additionally, we also consider a strongly structured matrix model, namely the circulant random matrix. In this latter case, the techniques used for proving the CLT are different from the techniques used in the former cases. The stringent structure of the circulant matrices, which was analyzed in the context of CLT by Adhikari et al. \cite{adhikari2018universality} in the light tail case, makes it stand out from the Wigner or Elliptic types of matrices. But the techniques used in \cite{adhikari2017fluctuations, benaych2014central} are not directly applicable here.

This article is split into several sections, where each section is dedicated to a particular type of matrix model. Also, for the convenience of the reader, the sections are mostly made self-contained.

% \begin{figure}
% \centering
% \begin{minipage}{0.48\textwidth}
%     \centering
%     \includegraphics[width=\linewidth]{Ellpitic Law (Exploding moments).png}
%     \vspace{2mm}
    
%     (a)
% \end{minipage}
% \hfill
% \begin{minipage}{0.48\textwidth}
%     \centering
%     \includegraphics[width=\linewidth]{circular law for non hermtian matrices with exploding moments.png}
%     \vspace{2mm}
    
%     (b)
% \end{minipage}

% \vspace{4mm}
% \caption{
% (a) Scatter plot of the real and imaginary parts of eigenvalues of a $6000\times 6000$ elliptic random matrix with exploding moments.  \\
% (b) Scatter plot of the real and imaginary parts of eigenvalues of a $6000\times 6000$ non-Hermitian random matrix with exploding moments.
% }
% \label{fig:combined}
% \end{figure}
%%%%%%%%%%%%%%%%%%%%%%%%%%%%%%%%%%%%%%%%%%%%%%%%%%%%%%%%%%%%%%%%%%%%%%%%%%%
%%%%%%%%%%%%%%%%%%     Elliptic matrices with exploding moments   %%%%%%%%%%%%%%%%%%%%%
%%%%%%%%%%%%%%%%%%%%%%%%%%%%%%%%%%%%%%%%%%%%%%%%%%%%%%%%%%%%%%%%%%%%%%%%%%%

%\section{Matrix Model and Assumptions}
\section{Elliptic matrices}

We consider a sequence of non-Hermitian random matrices
$$
A = \frac{1}{\sqrt{N}} X = [a_{ij}]_{1 \le i,j \le N},
$$
whose entries satisfy the correlation and exploding moment conditions stated in the Condition \ref{cond:matrixcond_1}. Here $X = [x_{ij}]_{1 \le i,j \le N}$ is a real-valued non-Hermitian random matrix.

\begin{matrixcondition}[Elliptic random matrices with exploding moments]
\label{cond:matrixcond_1}
Let $A = \frac{1}{\sqrt{N}} X = [a_{ij}]_{1 \le i,j \le N},$ where $X = [x_{ij}]_{1 \le i,j \le N}$ satisfies the following conditions.

\begin{enumerate}
\item   For all $1 \le i,j \le N$,
       $$
       \mathbb{E}[x_{ij}] = 0, \qquad 
       \mathbb{E}[x_{ij}^2] = 1.
      $$
\item The diagonal entries of $X$ are independent among themselves and of off-diagonal entries as well. The off-diagonal entries' only dependence occurs between pairs $(x_{ij},x_{ji}),$ while $(x_{ij},x_{ji})$ is independent of $(x_{kl},x_{lk})$ if $\{i, j\}\neq \{k, l\}.$
\item The entries of $X$ have exploding moments in the sense that for all $k,l \ge 0$ with $k+l \ge 2$,
$$
\frac{\mathbb{E}[x_{ij}^{k}x_{ji}^{l}]}{N^{(k+l)/2-\alpha}} \longrightarrow C_{k,l},
$$
for $i<j$. For all $k \ge 1$,
$$
%\frac{\mathbb{E}[x_{ii}^{k}]}{N^{k/2-\alpha}} = O(1), \qquad 
\frac{\mathbb{E}[|x_{ii}|^{k}]}{N^{k/2-\alpha}} = O(1),
$$
for some $\alpha > 0$, where $(C_{k,l})_{k,l \ge 0}$ are finite constants.
\end{enumerate}
\end{matrixcondition}

\begin{rem}
   The matrix ensemble considered here belongs to the class of random matrices with exploding moments, which is quantified by the factor $N^{k/2-\alpha}$. In particular, the factor
$N^{- \alpha}$ shows the moment growth does not depend on $k$, and therefore this is not just a rescaled version of a light-tailed model.

The parameter $\alpha$ controls how fast the moments grow with $N$. When $\alpha = 0$, the model reduces to the classical light-tailed setting with bounded moments. For $\alpha > 0$, the higher moments increase with $N$, which is characteristic of heavy-tailed behavior. Moreover, larger values of $\alpha$ mean the heavy-tailed effect becomes stronger.
\end{rem}

%%%%%%%%%%%%%%%%%%%%%%%%%%%%%%%%%%%%%%%%%%%%%%%%%%%%%%%%%%%%%%%%%%%%%%%%%%%
%%%%%%%%%%%%%%%%%%%%%%%%%%%%  Main result  %%%%%%%%%%%%%%%%%%%%%%%%%%%%%%%
%%%%%%%%%%%%%%%%%%%%%%%%%%%%%%%%%%%%%%%%%%%%%%%%%%%%%%%%%%%%%%%%%%%%%%%%%%%

%\subsection{Main result}\label{sec:main result}

\subsection{Convergence of the normalized traces}\label{section:Central Limit Theorem for Generalized Moments for corr matrices}
Let $k \geq 1$ be an integer. The normalized trace of $A^k$ can be written as
\begin{align}\label{eq:trace_expand}
\frac{1}{N}\Tr(A^k)
&= \frac{1}{N} \sum_{i_1,\dots,i_k = 1}^N 
a_{i_1 i_2} a_{i_2 i_3} \cdots a_{i_k i_1}.
\end{align}
To study the asymptotic behavior of this trace, we consider $\mathcal{P}(k)$, which denotes the set of partitions of $\{1,2,\dots,k\}$.
For each $\pi \in \mathcal{P}(k)$, define $S_\pi \subset [N]^k$ as the set of all multi indices $(i_1,i_2\dots,i_k)$ satisfying $i_{m}=i_{n}$ iff $m\sim_{\pi}n.$
Thus, the expansion \eqref{eq:trace_expand} can be rewritten as
\begin{align}\label{eq:trace_partition}
    \frac{1}{N}\Tr(A^k)
%     &=\frac{1}{N} \sum_{i_1,\dots,i_k = 1}^N 
% a_{i_1 i_2} a_{i_2 i_3} \cdots a_{i_k i_1}\notag\\
&=\sum_{\pi \in \mathcal{P}(k)}\frac{1}{N}\sum_{\mathbf{i}\in S_{\pi}}a_{i_1 i_2} a_{i_2 i_3} \cdots a_{i_k i_1}\notag\\
&=: \sum_{\pi \in \mathcal{P}(k)} \tau_N^e[\pi].
\end{align}
Note that, because of the partition $\pi$, the same random variables get merged together in $\tau^{e}_{N}[\pi],$ and their multiplicity appear in their exponent. For instance, suppose $k=5, N=10,$ and $\pi=\{\{1, 3\}, \{5\}, \{2, 4\}\}.$ Then $i_{1}=i_{2}, i_{2}=i_{4}$ and $i_{5}$ is neither of $i_{1}, i_{2}$, which yields a sequence of the form $\mathbb{E}[a_{19}a_{91}a_{19}a_{97}a_{71}].$ This sequence will reduce to $\mathbb{E}[a_{19}^{2}a_{91}]\mathbb{E}[a_{97}]\mathbb{E}[a_{71}].$ This enables us to analyze the limiting behavior of $\Tr(A^{k})/N$ in terms of $C_{k,l}s$ --- the limiting mixed moments. 

Now, for each partition $\pi$ of $\{1,2,\dots,k\},$ we consider a directed graph
$T_\pi = (V,E)$, where $V=\{V_1,V_2,\dots\}$ is the set of blocks of $\pi$ and $E$ is the multiset of directed edges defined as follows. There is an edge from block the $V_i$ to the block $V_j$ if there exists $m\in\{1,2,\dots,k\}$ such that $m\in V_i$ and $m+1\in V_j,$ where $k+1$ is identified with $1.$
For such a graph $T_\pi$, we can write
\begin{align}\label{eq:tauN_def}
\tau_N^e[\pi] = \tau_N^e[T_\pi]
= \frac{1}{N}\sum_{\substack{\phi:V\to [N]\\ \text{injective}}}
\prod_{(u,v)\in E} a_{\phi(u)\phi(v)}.
\end{align}

\begin{defn}[Thick tree]
A directed graph $T=(V, E)$ is called a thick tree if it is
a connected graph without loops and does not have any vertex pair $\{u, v\}$, which is connected \textcolor{blue}{only} by a single directed edge. Additionally, the underlying simple graph $\mathcal{T}=(V, \mathcal{E}),$ obtained by ignoring orientation and multiplicity satisfies $|V|=|\mathcal{E}|+1$.
\end{defn}

\begin{figure}[htbp]
\centering

\begin{minipage}{0.45\textwidth}
\centering

\begin{tikzpicture}[->, >=stealth, thick]

% Nodes
\node[circle, fill, inner sep=2pt] (v1) at (0,0) {};
\node[circle, fill, inner sep=2pt] (v2) at (2,1) {};
\node[circle, fill, inner sep=2pt] (v3) at (4,0) {};
\node[circle, fill, inner sep=2pt] (v4) at (2,-1) {};

% Valid edges (BLUE)
\draw[brown, bend left=20] (v1) to (v2);
\draw[brown, bend left=40] (v1) to (v2);

\draw[brown, bend left=20] (v2) to (v3);
\draw[brown, bend left=25] (v3) to (v2);

\draw[brown, bend left=20] (v1) to (v4);
\draw[brown, bend left=40] (v1) to (v4);

\end{tikzpicture}

\vspace{3mm}
\textbf{thick tree}

\end{minipage}
\hfill
\begin{minipage}{0.45\textwidth}
\centering

\begin{tikzpicture}[->, >=stealth, thick]

% Nodes
\node[circle, fill, inner sep=2pt] (v1) at (0,0) {};
\node[circle, fill, inner sep=2pt] (v2) at (1.8,1.2) {};
\node[circle, fill, inner sep=2pt] (v3) at (3.6,0) {};

% Violating edges (RED)

% single connection issue
\draw[brown, bend left=20] (v1) to (v2);
\draw[brown, bend left=40] (v2) to (v1);

% loop (not allowed)
\draw[green] (v2) edge[loop above] ();

% single edge between pair (violation)
\draw[green] (v2) -- (v3);

\end{tikzpicture}

\vspace{3mm}
\textbf{Not thick tree}

\end{minipage}
\caption{Examples of graphs satisfying the conditions of thick tree (left) and a graph violating them (right). The green-colored edges highlight the violations of the defining conditions.}
\label{fig:our_graph_comparison}

\end{figure}
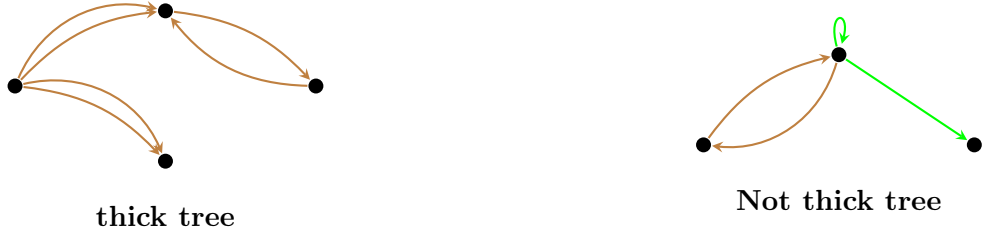

\begin{defn}[Fat Tree]
A directed graph $T=(V, E)$ is called a fat tree if it is a connected graph without loops and all the edges have multiplicity more than one. Additionally, the underlying simple graph $\mathcal{T}=(V, \mathcal{E}),$ obtained by ignoring orientation and multiplicity satisfies $|V|=|\mathcal{E}|+1$.
\end{defn}

% \textcolor{red}{
% \begin{defn}[\textcolor{blue}{"Our graph 2"}]
% A directed graph $T=(V, E)$ is called \textcolor{blue}{"Our graph 2"} if it is connected and there does not exist an ordered vertex pair $(u, v)$ such that they are connected by exactly one edge, or such that there is one directed edge in one direction and one or more edges in the opposite direction. Moreover, no vertex has a loop of multiplicity one \textcolor{orange}{I think it should not have loops at all because the graph is connected and the underlying simple graph must satisfy $|V|=|E|+1$}. The underlying simple graph, obtained by ignoring orientation and multiplicity, satisfies $|V|=|E|+1$. 
% \end{defn}
% }
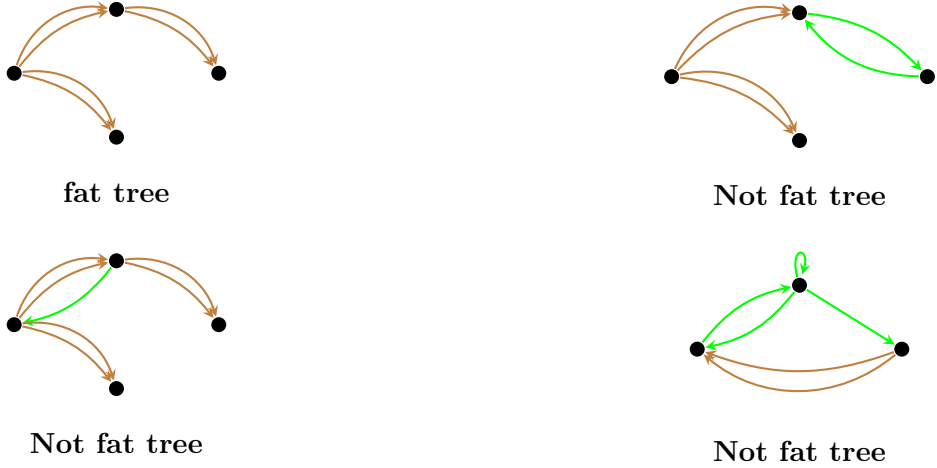
\begin{figure}[htbp]
\centering

% -------- First row --------
\begin{minipage}{0.45\textwidth}
\centering

\begin{tikzpicture}[->, >=stealth, thick, scale=0.85]

% Nodes
\node[circle, fill, inner sep=2pt] (v1) at (0,0) {};
\node[circle, fill, inner sep=2pt] (v2) at (1.6,1) {};
\node[circle, fill, inner sep=2pt] (v3) at (3.2,0) {};
\node[circle, fill, inner sep=2pt] (v4) at (1.6,-1) {};

% Valid edges (BLUE)
\draw[brown, bend left=20] (v1) to (v2);
\draw[brown, bend left=40] (v1) to (v2);

\draw[brown, bend left=20] (v2) to (v3);
\draw[brown, bend left=40] (v2) to (v3);

\draw[brown, bend left=20] (v1) to (v4);
\draw[brown, bend left=40] (v1) to (v4);

\end{tikzpicture}

\vspace{3mm}
\textbf{fat tree}

\end{minipage}
\hfill
\begin{minipage}{0.45\textwidth}
\centering

\begin{tikzpicture}[->, >=stealth, thick, scale=0.85]

% Nodes
\node[circle, fill, inner sep=2pt] (v1) at (0,0) {};
\node[circle, fill, inner sep=2pt] (v2) at (2,1) {};
\node[circle, fill, inner sep=2pt] (v3) at (4,0) {};
\node[circle, fill, inner sep=2pt] (v4) at (2,-1) {};

% Violations (RED)
\draw[brown, bend left=20] (v1) to (v2);
\draw[brown, bend left=40] (v1) to (v2);

\draw[green, bend left=20] (v2) to (v3);
\draw[green, bend left=25] (v3) to (v2); % mixed direction violation

\draw[brown, bend left=20] (v1) to (v4);
\draw[brown, bend left=40] (v1) to (v4);

\end{tikzpicture}

\vspace{3mm}
\textbf{Not fat tree}

\end{minipage}

\vspace{5mm}

% -------- Second row --------
\begin{minipage}{0.45\textwidth}
\centering

\begin{tikzpicture}[->, >=stealth, thick, scale=0.85]

% Nodes
\node[circle, fill, inner sep=2pt] (v1) at (0,0) {};
\node[circle, fill, inner sep=2pt] (v2) at (1.6,1) {};
\node[circle, fill, inner sep=2pt] (v3) at (3.2,0) {};
\node[circle, fill, inner sep=2pt] (v4) at (1.6,-1) {};

% Violations (RED)
\draw[brown, bend left=20] (v1) to (v2);
\draw[green, bend left=20] (v2) to (v1); % opposite direction
\draw[brown, bend left=40] (v1) to (v2);

\draw[brown, bend left=20] (v2) to (v3);
\draw[brown, bend left=40] (v2) to (v3);

\draw[brown, bend left=20] (v1) to (v4);
\draw[brown, bend left=40] (v1) to (v4);

\end{tikzpicture}

\vspace{3mm}
\textbf{Not fat tree}

\end{minipage}
\hfill
\begin{minipage}{0.45\textwidth}
\centering

\begin{tikzpicture}[->, >=stealth, thick, scale=0.85]

% Nodes
\node[circle, fill, inner sep=2pt] (v1) at (0,0) {};
\node[circle, fill, inner sep=2pt] (v2) at (1.6,1) {};
\node[circle, fill, inner sep=2pt] (v3) at (3.2,0) {};

% Violations (RED)
\draw[green, bend left=20] (v1) to (v2);
\draw[green, bend left=20] (v2) to (v1);

\draw[brown, bend left=20] (v3) to (v1);
\draw[brown, bend left=40] (v3) to (v1);

\draw[green] (v2) -- (v3); % single edge violation
\draw[green] (v2) edge[loop above] (); % loop violation

\end{tikzpicture}

\vspace{3mm}
\textbf{Not fat tree}

\end{minipage}

\caption{Examples of graphs satisfying the conditions of fat tree (top-left) and graphs that do not. The green-colored edges highlight the violations of the defining conditions.}
\label{fig:our_graph2_comparison}

\end{figure}

\begin{Thm}[Convergence of normalized trace]\label{Thm:Convergence of normalized trace}
    Let $A$ be the matrix same as in Condition \ref{cond:matrixcond_1}. 
For any partition
$\pi\in\bigcup_{k\ge1}\mathcal P(k)$, let $\tau_N^e[T_\pi]$ be as defined in \eqref{eq:tauN_def}. Then, as $N\to\infty$, the following hold.

\begin{enumerate}[label=(\alph*)]
    \item For $\alpha > 1$ we have
    $$
    \mathbb{E}[\tau_N^e[T_\pi]] \longrightarrow 0 .
    $$
    \item   For $\alpha=1$ we have
    \begin{align*}
        &\mathbb{E}\big[\tau_N^e[T_\pi]\big]\longrightarrow
    \tau^e[T_\pi]
   :=
    \begin{cases}
   \displaystyle\prod_{\substack{k,l\ge0\\ k+l\ge2}} C_{k,l}^{\,q_{k,l}}, &\text{if $T_\pi$ is a thick fat},\\[0.8em]
    0, & \text{otherwise,}
   \end{cases}
    \end{align*}
    where $q_{k,l}$ denote the number of ordered vertex pairs $(i,j)$ with $i<j$ such that there are exactly $k$ edges from vertex $i$ to vertex $j$ and $l$ edges from vertex $j$ to vertex $i$ in thick tree $T_\pi$.
   \item For $\alpha<1$ we have 
    \begin{align*}
\mathbb{E}\big[\tau_N^e[T_\pi]\big] \sim
\begin{cases}
0, & \parbox[t]{0.6\textwidth}{\setstretch{0.9}\raggedright
$T_\pi$ has either a vertex pair $\{u, v\}$ connected by a single directed edge, or a vertex with a single loop (or both)},\\[0.6em]
O(N^{|V|-1-\alpha \mathfrak{p}}), & \text{otherwise,}
\end{cases}
\end{align*}
where $|V|$ is the number of vertices of $T_\pi$, and $\mathfrak{p}$ is the number of edges of the graph, which is obtained by forgetting the multiplicity and orientation of the edges of the graph $T_\pi$.
\end{enumerate}
\end{Thm}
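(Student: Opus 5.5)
The plan is to compute $\mathbb{E}[\tau_N^e[T_\pi]]$ directly from \eqref{eq:tauN_def}. Independence of entries across unordered pairs reduces each term to a product of mixed moments, and counting injective labelings gives the power of $N$. Fix $\pi$ and write $T_\pi=(V,E)$. Let $\mathcal T=(V,\mathcal E)$ be the underlying simple graph; it is connected, since the closed walk $1\to2\to\cdots\to k\to1$ visits every block. Let $\mathfrak p=|\mathcal E|$ count the non-loop simple edges. For an injective $\phi$, Condition \ref{cond:matrixcond_1}(2) gives
\begin{align*}
\mathbb{E}\prod_{(u,v)\in E}a_{\phi(u)\phi(v)}=N^{-k/2}\prod_{\{u,v\}\in\mathcal E}\mathbb{E}\big[x_{\phi(u)\phi(v)}^{m_{uv}}x_{\phi(v)\phi(u)}^{m_{vu}}\big]\prod_{v\,:\,\ell_v\ge1}\mathbb{E}\big[x_{\phi(v)\phi(v)}^{\ell_v}\big],
\end{align*}
where $m_{uv}$ is the number of edges from $u$ to $v$ and $\ell_v$ is the number of loops at $v$. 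If some pair $\{u,v\}$ is joined by a single directed edge, or some $\ell_v=1$, a factor $\mathbb{E}[x]=0$ appears. The whole expectation then vanishes identically, which gives the first case of (c) and kills these graphs in (a) and (b) as well.

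Otherwise every factor has total degree at least $2$. By part (3) of the condition, together with $|\mathbb{E}x^\ell|\le\mathbb{E}|x|^\ell$ for the diagonal, each off-diagonal factor is $N^{(m_{uv}+m_{vu})/2-\alpha}(C_{m_{uv},m_{vu}}+o(1))$. Each diagonal factor is $O(N^{\ell_v/2-\alpha})$. Because the total edge count is $\sum_{\mathcal E}(m_{uv}+m_{vu})+\sum_v\ell_v=k$, the $N^{k/2}$ factors cancel exactly. There are $N(N-1)\cdots(N-|V|+1)\sim N^{|V|}$ injective labelings, and the moment asymptotics are uniform in $\phi$ because the convergence in Condition \ref{cond:matrixcond_1}(3) is assumed for the identically structured pairs $i<j$. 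Writing $L$ for the number of vertices carrying loops, this yields
\begin{align*}
\mathbb{E}[\tau_N^e[T_\pi]]=O\big(N^{|V|-1-\alpha(\mathfrak p+L)}\big),
\end{align*}
with an asymptotic equality $N^{|V|-1-\alpha\mathfrak p}\prod C_{m_{uv},m_{vu}}(1+o(1))$ when $L=0$. This already gives the second case of (c), since $L\ge0$.

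For (a) and (b) I use connectivity, which gives $\mathfrak p\ge|V|-1$, with equality iff $\mathcal T$ is a tree. Hence the exponent satisfies
\begin{align*}
|V|-1-\alpha\mathfrak p\le(1-\alpha)(|V|-1)-\alpha L-(\mathfrak p-|V|+1)\alpha.
\end{align*}
For $\alpha>1$ this is negative unless $|V|=1$ and $L=0$. But $|V|=1$ forces all $k$ edges to be loops, so $L=1$, and the exponent is then $-\alpha<0$. So (a) follows.

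For $\alpha=1$ the exponent is $-(\mathfrak p-|V|+1)-L$. It is $0$ exactly when $\mathcal T$ is a tree and there are no loops; combined with the excluded single-edge cases, this is precisely the thick-tree condition. In that case the limit is $\prod_{\{u,v\}}C_{m_{uv},m_{vu}}=\prod C_{k,l}^{q_{k,l}}$, which is (b). Here I read ``thick fat'' in the statement as ``thick tree'', matching the definition of $q_{k,l}$. To match the convention of $q_{k,l}$, the orientation $i<j$ is taken with respect to the labels. This needs $C_{k,l}$ to be read consistently with $C_{l,k}$ under swapping, and I will check that bookkeeping.

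The main obstacle is mild. One point is the uniformity of the $o(1)$ errors over labelings, which holds since the pair law is used through the single limit $C_{k,l}$; the symmetric bound for $i>j$ should be noted. The other is tracking the loop contribution, which only improves the bound.
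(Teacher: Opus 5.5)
Your proposal is correct and follows the paper's own proof: count the $N!/(N-|V|)!$ injective labelings, use centering to kill every graph that has a pair joined by a lone directed edge or a vertex carrying a single loop, reduce to the exponent $|V|-1-\alpha\mathfrak p$, and use $\mathfrak p\ge |V|-1$ with equality iff the underlying graph is a tree (the paper counts loop-vertices inside $\mathfrak p$ rather than through your $L$, and your displayed inequality should be the identity $|V|-1-\alpha(\mathfrak p+L)=(1-\alpha)(|V|-1)-\alpha(\mathfrak p-|V|+1)-\alpha L$). The orientation bookkeeping you deferred needs no symmetry of $C_{k,l}$: the closed walk $1\to 2\to\cdots\to k\to 1$ must cross each edge of a tree equally often in both directions, so $m_{uv}=m_{vu}$ and only the constants $C_{m,m}$ enter (b); the uniformity in $\phi$ you invoke is not in Condition~1 but is the same assumption the paper makes when it asserts that the law of the entries is invariant under relabeling of the indices.
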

\begin{proof}
For a fixed partition $\pi$ in $\bigcup_{k\ge1}\mathcal P(k),$ let $\tau_N^e[T_\pi]$ be as defined in \eqref{eq:tauN_def}. Then
\begin{align}\label{eq: tauN}
\mathbb{E}\big[\tau_N^e[T_\pi]\big]
&=\frac{1}{N}\sum_{\substack{\phi:V\to [N]\\ \text{injective}}}
\mathbb{E}\bigg[\prod_{(u,v)\in E} a_{\phi(u)\phi(v)}\bigg].
\end{align}
Since the distribution of the entries $a_{ij}$ is invariant under relabeling of the
indices, 
$$
\mathbb{E}\bigg[\prod_{(u,v)\in E} a_{\phi(u)\phi(v)}\bigg]
$$
does not depend on the particular choice of the injective map $\phi$. Therefore,
\begin{align}\label{eq: tauN}
\mathbb{E}\big[\tau_N^e[T_\pi]\big]
%&=\frac{1}{N}\sum_{\substack{\phi:V\to [N]\\ \text{injective}}}
%\mathbb{E}\bigg[\prod_{(u,v)\in E} a_{\phi(u)\phi(v)}\bigg]\notag\\
&=\frac{1}{N}\frac{N!}{(N-|V|)!}\mathbb{E}\bigg[\prod_{(u,v)\in E} a_{\phi(u)\phi(v)}\bigg]\notag\\
    &=:\frac{(N-1)!}{(N-|V|)!}\delta_N^0(T(a_N)),
\end{align}
where $\delta_N^0(T(a_N))=\mathbb{E}\bigg[\prod_{(u,v)\in E} a_{\phi(u)\phi(v)}\bigg].$ Since $\mathbb{E}[x_{ij}] = 0$ for $1 \leq i,j \leq N$, we have
\begin{equation}\label{eq: deltaN is zero}
    \delta_N^0(T(\mathbf{a}_N)) = 0
\end{equation}
 whenever there exists a vertex pair $\{u, v\}$, which is connected only by a single edge in $T_\pi$, or there is a vertex having a loop of multiplicity one. Therefore, to obtain a non-zero contribution, from this point onward, we assume that $T_\pi$ does not have any vertex pair $\{u, v\}$, which is only connected by a single edge, and there is no vertex having a loop of multiplicity one. We denote such graphs by $\hat{T}_{\pi}$. The reduced simple graph will be denoted by $\hat{\mathcal{T}}_{\pi},$ which is obtained after ignoring the direction and multiplicity of the edges of $\hat{T}_{\pi}.$
% \textcolor{blue}{but $\mathbb{E}[x_{ii}]=0$ and $\mathbb{E}[|x_{ii}|/N^{1/2-1}]=O(1)$ from ,
% $$
% %\frac{\mathbb{E}[x_{ii}^{k}]}{N^{k/2-\alpha}} = O(1), \qquad 
% \frac{\mathbb{E}[|x_{ii}|^{k}]}{N^{k/2-\alpha}} = O(1), \text{ for all } k \ge 1
% $$}

By Condition \ref{cond:matrixcond_1}, the diagonal entries are independent among themselves and of off-diagonal entries as well. The off-diagonal entries' dependence occurs only within the pairs $(x_{ij},x_{ji}),$ while $(x_{ij},x_{ji})$ is independent of $(x_{kl},x_{lk})$ if $(i, j)\neq (k, l).$ 

For $k \ge 2$, let $p_k$ denote the number of vertices carrying exactly $k$ loops. For $k, l \ge 0$ with $k+l\geq 2,$ let $q_{k,l}$ denote the number of ordered vertex pairs $(i,j)$ with $i<j$ such that there are exactly $k$ edges from vertex $i$ to vertex $j$ and $l$ edges from vertex $j$ to vertex $i$.
Then 
\begin{align}
    \delta_N^0(T(a_N))
    &=\mathbb{E}\bigg[\prod_{(u,v)\in E} a_{\phi(u)\phi(v)}\bigg]\notag\\
    &=\prod_{k\geq2}\big(\mathbb{E}[a_{11}^k]\big)^{p_k}\displaystyle\prod_{\substack{k,l\ge0\\ k+l\ge2}}\big(\mathbb{E}[a_{12}^{k}a_{21}^{l}]\big)^{q_{k,l}}.\notag
\end{align}
Recall that $a_{ij}=x_{ij}/\sqrt{N}$. By Condition \ref{cond:matrixcond_1}, we have
$$
N^{-\frac{k}{2}+\alpha }\mathbb{E}[|x_{11}|^k]=O(1),
\qquad
N^{-\frac{k+l}{2}+\alpha }\mathbb{E}[x_{12}^k x_{21}^l]
\longrightarrow C_{k,l}.
$$
Therefore, each loop contributes a factor of order $N^{-\alpha}$, and each ordered vertex
pair $(i,j),$ with $i<j$ also contributes a factor of order $N^{-\alpha}$.
Therefore,
\begin{align}\label{eq: deltaN}
    &\delta_N^0(T(a_N))
=
N^{-\alpha \mathfrak{p}}
\left(
\prod_{k\ge2} \left(N^{-\frac{k}{2}+\alpha }\mathbb{E}[x_{11}^k]\right)^{p_k}
\right)
\left(
\displaystyle\prod_{\substack{k,l\ge0\\ k+l\ge2}}
\Big(
N^{-\frac{k+l}{2}+\alpha }
\mathbb{E}[x_{12}^k x_{21}^l]
\Big)^{q_{k,l}}
\right),
\end{align}
where
$$
\mathfrak{p}
:=
\sum_{k\ge2} p_k
+
\displaystyle\sum_{\substack{k,l\ge0\\ k+l\ge2}} q_{k,l}.
$$
Here $\mathfrak{p}$ is the number of edges of $\hat{\mathcal{T}}_{\pi}$. Now, by using \eqref{eq: deltaN} in \eqref{eq: tauN} we have
\begin{align}\label{eq: tauN_Final}
\mathbb{E}\big[\tau_N^e[\hat{T}_{\pi}]\big]
    &= \frac{(N-1)!}{(N-|V|)!}\frac{1}{N^{\alpha \mathfrak{p}}}\left(
\prod_{k\ge2} \left(N^{-\frac{k}{2}+\alpha }\mathbb{E}[x_{11}^k]\right)^{p_k}
\right)
\left(
\displaystyle\prod_{\substack{k,l\ge0\\ k+l\ge2}}
\Big(
N^{-\frac{k+l}{2}+\alpha }
\mathbb{E}[x_{12}^k x_{21}^l]
\Big)^{q_{k,l}}
\right)\notag\\
 &=N^{|V|-(\alpha \mathfrak{p}+1)}\left(
\prod_{k\ge2} \left(N^{-\frac{k}{2}+\alpha }\mathbb{E}[x_{11}^k]\right)^{p_k}
\right)
\displaystyle\prod_{\substack{k,l\ge0\\ k+l\ge2}}
\Big(
N^{-\frac{k+l}{2}+\alpha }
\mathbb{E}[x_{12}^k x_{21}^l]
\Big)^{q_{k,l}}
+o(1).
\end{align}
Since 
for all $k,l \ge 0$ with $k+l \geq 2$
% $$
%  \frac{\int z_1^kz_2^ld\mu_{X}(z)}{N^{(k+l)/2-1}} \longrightarrow C_{k,l}, \quad \frac{\int |z|^kd\nu_{X}(z)}{N^{k/2-1}} = O(1)
% $$ 
$$
N^{-\frac{k+l}{2}+\alpha }\mathbb{E}[x_{12}^k x_{21}^l]
\longrightarrow C_{k,l}, 
$$
and for all $k\ge 2$ 
$$
N^{-\frac{k}{2}+\alpha }\mathbb{E}[|x_{11}|^k] = O(1),
$$
by Condition \ref{cond:matrixcond_1}, the asymptotic behavior of \eqref{eq: tauN_Final}
depends on the power of $N$ that is on $N^{|V|-1-\alpha \mathfrak{p}}.$ We now discuss the following cases for different values of $\alpha$ as follows.

\textbf{Case (a) $(\alpha>1)$.} For any connected graph, we have $|V| \leq \mathfrak{p} + 1$. Since $\hat{T}_\pi$ is connected by definition, it follows that $|V| \leq \mathfrak{p} + 1$. Since $\alpha > 1$, we have
$$
|V|-1-\alpha \mathfrak{p} \le \mathfrak{p}-\alpha \mathfrak{p} = \big(1-\alpha\big)\mathfrak{p} < 0,
$$
which implies
$$
\mathbb{E}\big[\tau_N^e[\hat{T}_\pi]\big]\longrightarrow 0.
$$
If $T_\pi$ has a vertex pair $\{u, v\}$, which is connected only by a single edge, or a vertex with a single loop (or both), then from \eqref{eq: deltaN is zero} we have 
$\mathbb{E}\big[\tau_N^e[T_\pi]\big]= 0.$
Therefore, for any graph $T_\pi$ associated to $\pi$, we have 
$
\mathbb{E}\big[\tau_N^e[T_\pi]\big]\longrightarrow 0.
$

\textbf{ Case (b) $(\alpha=1)$.} In this case, we have 
$$
|V|-\alpha \mathfrak{p}-1= |V|-\mathfrak{p}-1,
$$ 
where $\mathfrak{p}$ is the number of edges of $\hat{\mathcal{T}_{\pi}}$. From \cite[Theorem 2.1.4]{west2001introduction}, $|V|= \mathfrak{p}+1$ if and only if $T_\pi$ is thick tree.
Thus, for thick tree $T_\pi$, we have
$$
\mathbb{E}\big[\tau_N^e[T_\pi]\big]
\longrightarrow
\displaystyle\prod_{\substack{k,l\ge1\\ k+l\ge2}} C_{k,l}^{\,q_{k,l}}.
$$

Now, suppose that $T_{\pi}$ is not thick tree; that is, there exists a vertex pair $\{u, v\}$, which is connected by a single directed edge only, or there is a vertex having a loop of multiplicity one, or $|V|\neq \mathfrak{p} + 1$. In the first two cases, we have $\mathbb{E}\big[\tau_N^e[T_\pi]\big]
= 0.$ In the last case, since $|V| \leq \mathfrak{p} + 1$ for any connected graph, it follows that $|V|-\mathfrak{p}-1<0,$ which implies
$$
\mathbb{E}\big[\tau_N^e[T_\pi]\big]
\longrightarrow 0.
$$

\textbf{Case (c) $(\alpha<1)$.}  
If $T_\pi$ has a vertex pair $\{u, v\}$, which is connected by a single directed edge only, or there exists a vertex with a single loop, then from \eqref{eq: deltaN is zero} we have 
$
\mathbb{E}\big[\tau_N^e[T_\pi]\big]
= 0
$

Since $\alpha<1$, for the graph  $\hat{T}_\pi$, we have
$$
|V|-1-\alpha \mathfrak{p} \le \mathfrak{p}-\alpha \mathfrak{p} 
= (1-\alpha)\mathfrak{p}>0.
$$
This inequality does not give the sign of $|V|-1-\alpha \mathfrak{p}$ in general, it may be positive, zero, or negative.
Thus
$$
\mathbb{E}\big[\tau_N^e[T_\pi]\big] \sim O(N^{|V|-1-\alpha \mathfrak{p}}).
$$
\end{proof}
Going ahead, we now discuss the CLT for the normalized trace in the following subsection.

\subsection{CLT for the normalized traces}

We consider the centered and normalized random variables
\begin{align}\label{eq:centered and normalized random variables Z_N}
    &Z_N(T_\pi)
:= \sqrt{N}\,\big(\tau_N^e[T_\pi] - \mathbb{E}[\tau_N^e[T_\pi]]\big).
\end{align}
It is sufficient to prove the convergence of $\{Z_N(T_\pi)\}_{\pi\in\bigcup_{k\ge1}\mathcal{P}(k) }$ to Gaussian process in order to establish the CLT for $\{Z_N(k)\}_{k \geq 1}$,
where
$$
Z_N(k)
= \sqrt{N}\left(\frac{1}{N}\operatorname{Tr}(A^k)
- \mathbb{E}\left[\frac{1}{N}\operatorname{Tr}(A^k)\right]\right).
$$
\begin{Thm}[CLT for the normalized traces]\label{thm:CLT for normalized traces}
Let $A$ be the matrix same as in Condition \ref{cond:matrixcond_1} with $\alpha=1$. For any partition $\pi\in\bigcup_{k}\mathcal{P}(k)$, let $Z_N(T_\pi)$ be defined as in \eqref{eq:centered and normalized random variables Z_N}.
% $$
% Z_N(T_\pi)
% =
% \sqrt{N}\Big(
% \tau_N^0[T_\pi]-\mathbb{E}[\tau_N^0[T_\pi]]
% \Big).
% $$
Then the family of random variables
$
\big(Z_N(T_\pi)\big)_{\pi\in\bigcup_{k}\mathcal{P}(k)}
$
converges to a centered
Gaussian process
$
\big(z(T_\pi)\big)_{\pi\in\bigcup_{k}\mathcal{P}(k)}
$ as $N\to\infty.$ 
Moreover, for any two partitions $\pi_1,\pi_2$,
the covariance of the limiting process is given by
$$
\mathrm{Cov}\left(z(T_{\pi_1}),z(T_{\pi_2})\right)
=
\sum_{T\in\mathcal{P}^{\#}(T_{\pi_1},T_{\pi_2})}
\tau^e[T],
$$
where $\mathcal{P}^{\#}(T_{\pi_1},T_{\pi_2})$ denotes the collection of graphs obtained by disjoint copies of the graphs $T_{\pi_1}$ and $T_{\pi_2}$ and gluing them with at least one common edge, and $\tau^e[T]$ is same as defined in the Theorem \ref{Thm:Convergence of normalized trace}(b).
\end{Thm}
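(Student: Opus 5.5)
We will use the method of moments, in the form of an asymptotic Wick formula, following the strategy of Male et al.\ for Wigner matrices with exploding moments. It suffices to show that for any finite family of partitions $\pi_1,\dots,\pi_n$,
\begin{align*}
\mathbb{E}\Big[\prod_{j=1}^n Z_N(T_{\pi_j})\Big]\longrightarrow \sum_{\sigma}\prod_{\{a,b\}\in\sigma}\mathrm{Cov}\big(z(T_{\pi_a}),z(T_{\pi_b})\big),
\end{align*}
where $\sigma$ runs over the pair partitions of $[n]$ (the sum is empty for odd $n$). Since Gaussian moments determine the law, this gives convergence of finite-dimensional distributions to a centered Gaussian process with the stated covariance.

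\textbf{Expansion.} We expand each centered factor as
\begin{align*}
Z_N(T_{\pi_j})=N^{-1/2}\sum_{\phi_j \text{ injective}}\Big(\prod_{e\in E_j}a_{\phi_j(e)}-\mathbb{E}\prod_{e\in E_j}a_{\phi_j(e)}\Big).
\end{align*}
For a tuple $(\phi_1,\dots,\phi_n)$, the images of the $T_{\pi_j}$ glue into a single graph $T$ whose vertices are the images of the vertices of the $T_{\pi_j}$. Call two indices $j,j'$ linked if the graphs $\phi_j(T_{\pi_j})$ and $\phi_{j'}(T_{\pi_{j'}})$ share an unordered edge $\{u,v\}$ (or a common loop). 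By the independence structure of Condition \ref{cond:matrixcond_1}, only the pairs $(x_{uv},x_{vu})$ are dependent. Hence if some $j$ is linked to no other index, then its centered factor is independent of the rest, and the expectation vanishes. So we may restrict to gluings in which the linking relation has no singletons.

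\textbf{Power counting.} Fix a gluing pattern, that is, a graph $T$ together with its decomposition into connected components under linking. Each component contributes $N^{\#\text{vertices}}$ choices of indices, times $N^{-\mathfrak p}$ from the moment factors (each distinct unordered edge of the reduced simple graph gives $N^{-\alpha}=N^{-1}$, exactly as in the proof of Theorem \ref{Thm:Convergence of normalized trace}), times $N^{-m/2}$, where $m$ is the number of $Z$'s in the component. The centering subtractions produce products of the same form over sub-graphs, with no worse exponent.

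For a connected component we have $|V|\le \mathfrak p+1$. The exponent is therefore at most
\begin{align*}
\sum_{\text{comp}}\big(1-\tfrac m2\big),
\end{align*}
which is $\le 0$, with equality if and only if every component has $m=2$ and its reduced graph is a tree. The delicate point is that a component with $m\ge3$ must give strictly negative exponent even when it is a tree. The tree count alone gives $1-m/2<0$ for $m\ge3$, so this is fine. For $m=2$ we obtain exactly two graphs $T_{\pi_a},T_{\pi_b}$ glued along at least one common edge, forming a thick tree, which is precisely $\mathcal P^{\#}(T_{\pi_a},T_{\pi_b})$.

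\textbf{Limit for pairs.} For such a pair component, the non-centered term converges by Theorem \ref{Thm:Convergence of normalized trace}(b) applied to the glued graph $T$, giving $\tau^e[T]$. The subtracted term $\mathbb{E}[\cdot]\mathbb{E}[\cdot]$ corresponds to two graphs sharing an edge but with the moments factorized. Since the two graphs share an edge, the union has at most $|V_a|+|V_b|-2$ vertices. Meanwhile the edge count is $\mathfrak p_a+\mathfrak p_b$: the shared edge is counted twice because each factor has its own expectation. So the subtracted term is $O(N^{-1})$ after normalization and vanishes. Summing over pairings yields the Wick formula, and the case $n=2$ identifies the covariance.

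\textbf{Main obstacle.} The hardest part is the uniform bookkeeping of the centering terms inside components with $m\ge3$ and inside non-tree gluings. One must check that the expansion of the product of centered factors (an inclusion–exclusion over which factors carry their own expectation) never produces a positive power of $N$. I would handle this by bounding each term of the expansion by the same vertex/edge count, applied to the graph in which each expectation block is treated as a separate connected unit. This uses $|V|\le\mathfrak p+1$ on each unit together with $\mathbb{E}|x_{ij}|^k=O(N^{k/2-1})$, including the diagonal bound from Condition \ref{cond:matrixcond_1}.
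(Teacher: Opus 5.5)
Your proposal is correct and follows essentially the same route as the paper: you expand over vertex gluings, discard gluings where some $T_j$ shares no edge with the others, and use the power count $|\sigma|-\mathfrak{p}_\sigma-r/2\le c_\sigma-r/2\le 0$ to force pair components whose reduced graphs are trees, reading off the covariance from the case of two factors. Your treatment of the centering terms, where each separate expectation is its own unit so that any factor outside the joint expectation creates a doubled edge and costs $N^{-1}$, is a cleaner justification than the paper's of its inclusion--exclusion step. It also correctly shows that only $B=\{1,\dots,r\}$ survives, whereas the paper's proof additionally claims equality for $B^\complement=\{1,\dots,r\}$.
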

\begin{proof}
Let $\pi_1,\pi_2, \dots, \pi_r \in \bigcup_{k} \mathcal{P}(k)$. 
Let $T_1,T_2, \dots, T_r$ be directed graphs, each corresponding to a partition $\pi_j$, with $T_j = (V_j, E_j)$. These graphs are defined in the same way as in Section \ref{section:Central Limit Theorem for Generalized Moments for corr matrices}. Recall from \eqref{eq:centered and normalized random variables Z_N} 
$$
Z_N(T_i)
=
\sqrt{N}\Big(
\tau_N^e[T_i]
-
\mathbb{E}[\tau_N^e[T_i]]
\Big),
$$
where 
$$
\tau_N^e[T_i]
=
\frac{1}{N}
\sum_{\phi:V_i\to[N]\atop \text{injective}}
\prod_{(u,v)\in E_i} a_{\phi(u)\phi(v)}.
$$

To prove the convergence of $\{Z_N(T_j)\}$ to a Gaussian process, 
it is sufficient to show that the joint moments
$$
\mathbb{E}\left[\prod_{j=1}^r Z_N(T_{j})\right]
$$
satisfy Wick’s formula. We now express
\begin{align*}
   & \mathbb{E}\left[\prod_{j=1}^r Z_N(T_j)\right]\\
&=
N^{-r/2}
\sum_{\substack{\phi_1,\phi_2,\dots,\phi_r \\ \phi_j:V_j \rightarrow [N] \text{ injective}}}
\mathbb{E}\left[
\prod_{j=1}^r
\Bigg(
\prod_{(u,v)\in E_{j}} a_{\phi_{j(u)}\phi_{j(v)}}
-
\mathbb{E}\bigg[\prod_{(u,v)\in E_{j}} a_{\phi_{j(u)}\phi_{j(v)}}\bigg]
\Bigg)
\right].
\end{align*}

Let $\mathcal{P}(V_1,V_2,\dots,V_r)$ be the set of partitions of disjoint union of $V_1\cup V_2\cdots\cup V_r$
such that each block contains at most one vertex from each $V_j$. For $\sigma\in\mathcal{P}(V_1,V_2,\dots,V_r)$,
let
$$
S_\sigma =
\left\{
(\phi_1,\phi_2,\dots,\phi_r):
\phi_i(v)=\phi_j(v')
\iff v\sim_\sigma v'
\right\}.
$$
Then
$$
\mathbb{E}\left[\prod_{j=1}^r Z_N(T_j)\right]
=
\sum_{\sigma\in\mathcal{P}(V_1,V_2,\dots,V_r)}
N^{-r/2} \sum_{(\phi_1,\phi_2,\dots,\phi_r)\in S_\sigma}
\omega_N(\phi_1,\phi_2,\dots,\phi_r),
$$
where $$\omega_N(\phi_1,\phi_2,\dots,\phi_r)=\mathbb{E}\left[
\prod_{j=1}^r
\Bigg(
\prod_{(u,v)\in E_{j}} a_{\phi_{j(u)}\phi_{j(v)}}
-
\mathbb{E}\bigg[\prod_{(u,v)\in E_{j}} a_{\phi_{j(u)}\phi_{j(v)}}\bigg]
\Bigg)
\right].$$
Since $\omega_N(\phi_1,\phi_2,\dots,\phi_r)$ depends only on $\sigma$ so we denote
$\omega_N(\phi_1,\phi_2,\dots,\phi_r)=\omega_N(\sigma).$ 
Thus,
\begin{align}\label{eq:exp_product_2}
\mathbb{E}\left[\prod_{j=1}^r Z_N(T_j)\right]
&=
\sum_{\sigma\in\mathcal{P}(V_1,V_2,\dots,V_r)}
N^{-r/2}
\sum_{(\phi_1,\phi_2,\dots,\phi_r)\in S_\sigma}
\omega_N(\phi_1,\phi_2,\dots,\phi_r)\\
&=\sum_{\sigma\in\mathcal{P}(V_1,V_2,\dots,V_r)}
   N^{-r/2}
   \frac{N!}{(N - |\sigma|)!}
   \omega_N(\sigma).\notag
\end{align}
Now, if any graph $ T_i$ does not share an edge with any of the other graphs under $\sigma$, then by centering and independence of the entries of $ A $, we have $\omega_N(\sigma) = 0.$
Only partitions $\sigma$ that identify at least one edge between two graphs contribute. Therefore, we define
\begin{align*}
&\mathcal{P}^{\#}(V_1,V_2,\dots,V_r)\\
&= \Big\{\sigma \in \mathcal{P}(V_1,V_2,\dots,V_r)\ \Big|\ 
\parbox{0.6\textwidth}{\setstretch{0.9}
each graph $T_i$ shares at least one edge with another graph $T_{j}$ for $j\neq i$
}
\Big\}.
\end{align*}

Hence, from \eqref{eq:exp_product_2} we have
\begin{align}\label{eq:exp_product_3}
\mathbb{E}\left[\prod_{j=1}^r Z_N(T_j)\right]
&=\sum_{\sigma\in\mathcal{P}^{\#}(V_1,V_2,\dots,V_r)}
   N^{-r/2}
   \frac{N!}{(N - |\sigma|)!}
   \omega_N(\sigma)\notag\\
   &=\sum_{\sigma\in \mathcal{P}^{\#}(V_1,V_2,\dots,V_r)}
 N^{-r/2+|\sigma|}\,\omega_N(\sigma)\,(1+O(N^{-1})).\notag
\end{align}

We now expand $\omega_{N}(\sigma)$ as
\begin{align}
    &\omega_N(\sigma)
= \sum_{B\subseteq\{1,2,\dots,r\}}
(-1)^{r-|B|}
\mathbb{E}\bigg[
  \prod_{j\in B}
  \prod_{(u,v)\in E_j}a_{\phi_{j(u)}\phi_{j(v)}}
\bigg]
\prod_{j\notin B}
\mathbb{E}\bigg[
  \prod_{(u,v)\in E_j}a_{\phi_{j(u)}\phi_{j(v)}}
\bigg].
\end{align}
Since $\mathbb{E}[a_{ij}] = 0$ for $1\leq i,j\leq N$, we have 
\begin{equation}\label{eq:omegaN is zero}
    \omega_N(\sigma) = 0
\end{equation}
 whenever there exists a vertex pair $\{u, v\}$, which is connected by a single directed edge only, or there is a vertex having a loop of multiplicity one (or both), in $T_\pi$. Therefore, to obtain a non-zero contribution, from this point onward, we assume that $T_\pi$ does not have any vertex pair $\{u, v\}$, which is connected by a single directed edge, and there is no vertex having a loop of multiplicity one. We denote such graphs by $\hat{T}_{\pi}$. The reduced simple graph is denoted by $\hat{\mathcal{T}}_{\pi},$ which is obtained after ignoring the direction and multiplicity of the edges of $\hat{T}_{\pi}$.

For a fixed $B\subseteq\{1,2,\dots,r\}$, let $T_B$ be the directed graph obtained by merging
the vertices of $T_j$, $j\in B$, that belong to the same block of $\sigma$. Let $p_k$ denote the number of vertices of $\hat{T}_B$ with $k$ loops attached,
and $q_{k,l}$ the number of ordered vertex pairs $(u,v)$ with $u<v$
such that there are $k$ edges from $u$ to $v$ and $l$ edges from $v$ to $u$. By independence and the correlation assumptions on $A$, we have
\begin{align}
   &\mathbb{E}\left[\prod_{j\in B}\prod_{(u,v)\in E_j} a_{\phi_{j(u)}\phi_{j(v)}}\right]\notag\\
% &= N^{-|\bar E_B|}
% \prod_{k\ge1}\left(\int t^k d\mu_N(t)\,N^{-k/2+1}\right)^{p_k}
% \prod_{k,l\ge0}\left(\int t^{k+l}\,d\mu_N(t)\,N^{-(k+l)/2+1}\right)^{q_{k,l}}\notag\\
&= \prod_{k\geq2}\big(\mathbb{E}[a_{11}^k]\big)^{p_k}\displaystyle\prod_{\substack{k,l\ge0\\ k+l\ge2}}\big(\mathbb{E}[a_{12}^{k}a_{21}^{l}]\big)^{q_{k,l}}\notag\\
&= N^{-\mathfrak{p}_B}
\prod_{k\geq2}\bigg(\frac{\mathbb{E}[|x_{11}|^k]}{N^{k/2-1}}\bigg)^{p_k}\displaystyle\prod_{\substack{k,l\ge0\\ k+l\ge2}}\bigg(\frac{\mathbb{E}[x_{12}^{k}x_{21}^{l}]}{N^{(k+l)/2-1}}\bigg)^{q_{k,l}}\notag\\
&=: N^{- \mathfrak{p}_B}\,\delta_N(B),
\end{align}
where $$
\mathfrak{p}_B:=
\sum_{k\ge2} p_k
+
\displaystyle\sum_{\substack{k,l\ge0\\ k+l\ge2}} q_{k,l}.
$$
Here $\mathfrak{p}_B$ is the number of edges of $\hat{\mathcal{T}}_B$.

Let $T_{B^{\complement}}$ be the graph obtained by merging the vertices of the graphs $T_j$, $j\notin B$,
that belong to the same block of $\sigma$. Let $p_{k}'$ denote the number of vertices of $\hat{T}_{B^{\complement}}$ with $k$ loops attached.
Let $q_{k,l}'$ denotes the number of ordered vertex pair $(u,v)$ with $u<v$
such that there are $k$ directed edges from $u$ to $v$ and $l$ directed edges from $u$ to $v.$ By independence and the correlation assumptions on $A$, we have
\begin{align*}
    &\prod_{j\notin B}
\mathbb{E}
\bigg[
\prod_{(u,v)\in E_j}
a_{\phi_{j(u)}\phi_{j(v)}}
\bigg]\\
&=\prod_{k\geq2}\big(\mathbb{E}[a_{11}^k]\big)^{p_{k}{'}}\displaystyle\prod_{\substack{k,l\ge0\\ k+l\ge2}}\big(\mathbb{E}[a_{12}^{k}a_{21}^{l}]\big)^{q_{k,l}'}\notag\\
&= N^{-(\mathfrak{p}_{B^\complement})}
\prod_{k\geq2}\bigg(\frac{\mathbb{E}[|x_{11}|^k]}{N^{k/2-1}}\bigg)^{p_{k}{'}}\displaystyle\prod_{\substack{k,l\ge0\\ k+l\ge2}}\bigg(\frac{\mathbb{E}[x_{12}^{k}x_{21}^{l}]}{N^{(k+l)/2-1}}\bigg)^{q_{k,l}'}\notag\\
&=: N^{- (\mathfrak{p}_{B^\complement})}\,\delta_N(B^\complement),
\end{align*}
where $$
\mathfrak{p}_{B^\complement}:=
\sum_{k\ge1} p_k{'}
+
\displaystyle\sum_{\substack{k,l\ge0\\ k+l\ge2}} q_{k,l}'.
$$
Here $\mathfrak{p}_{B^{\complement}}$ is the number of edges of $\hat{\mathcal{T}}_{B^{\complement}}$.
Let $T_\sigma$ be the directed graph obtained by merging all the vertices of
$T_1,T_2,\dots,T_r$ belonging to the same block of $\sigma$.
Let $c_\sigma$ be the number of connected components of $\hat{T}_\sigma$ and $\mathfrak{p}_\sigma$ is the number of edges of $\hat{\mathcal{T}}_\sigma$. Then

\begin{align}\label{eq: exp of product last}
    &\mathbb{E}\left[\prod_{j=1}^r Z_N(T_j)\right]\\
    &=\sum_{\sigma\in\mathcal{P}^\#(V_1,V_2,\dots,V_r)}
\sum_{B\subseteq\{1,2,\dots,r\}}N^{-r/2+|\sigma|- \mathfrak{p}_{B}-\mathfrak{p}_{B^\complement}}(-1)^{r-|B|}
\delta_N(B)\delta_N(B^\complement)[1+O(N^{-1})]\notag\\
    &=\sum_{\sigma\in\mathcal{P}^\#(V_1,V_2,\dots,V_r)}N^{c_\sigma - r/2}N^{|\sigma|-c_\sigma-\mathfrak{p}_\sigma}
\sum_{B\subseteq\{1,2,\dots,r\}}
N^{\mathfrak{p}_\sigma-\mathfrak{p}_{B}-\mathfrak{p}_{B^\complement}}
(-1)^{r-|B|}\notag\\
&\;\;\; \times\delta_N(B)\delta_N(B^\complement)[1+O(N^{-1})].\notag
\end{align}

Here the quantities $\delta_N(B)$ and $\delta_N(B^\complement)$ are bounded. Thus, the asymptotic behavior of \eqref{eq: exp of product last}
depends on the exponent of $N$ that is on 
$$
(c_\sigma - r/2)+(\mathfrak{p}_\sigma-\mathfrak{p}_{B}-\mathfrak{p}_{B^\complement})+
(|\sigma|-c_\sigma-\mathfrak{p}_\sigma).
$$
We first consider $\sum_{B\subseteq\{1,2,\dots,r\}}
N^{\mathfrak{p}_\sigma-\mathfrak{p}_{B}-\mathfrak{p}_{B^\complement}}.$ Here $\mathfrak{p}_\sigma-\mathfrak{p}_{B}-\mathfrak{p}_{B^\complement} \le 0$ and equality holds iff $ B = \{1,2,\dots,r\} $ or $ B^\complement = \{1,2,\dots,r\} $. Without loss of generality, we assume $B = \{1,2,\dots,r\}.$ Then,
\begin{align}\label{eq:power of N 2nd eq}
    N^{\mathfrak{p}_\sigma-\mathfrak{p}_{B}-\mathfrak{p}_{B^\complement}}
= \mathbf{1}_{\{B=\{1,2,\dots,r\}\}} + O(N^{-1}).
\end{align}

We now consider the remaining factors $N^{c_\sigma - r/2}N^{|\sigma|-c_\sigma-\mathfrak{p}_\sigma}.$ Since $c_\sigma\leq r/2$ we have
$$
N^{c_\sigma-r/2} = O(N^{-\epsilon}) \text{ if }c_\sigma < r/2,
$$
where $\epsilon>0.$
Now, from \cite[Theorem 2.1.4]{west2001introduction}, $\mathfrak{p}_\sigma+c_\sigma-|\sigma|$ is the number of cycles of the graph $\hat{\mathcal{T}}_\sigma$. Thus $\mathfrak{p}_\sigma+c_\sigma-|\sigma|\geq0$ which implies $|\sigma|-c_\sigma-\mathfrak{p}_\sigma \leq 0.$ Equality holds iff $\hat{\mathcal{T}}_\sigma$ is a forest. Thus
\begin{align*}
    N^{|\sigma|-c_\sigma-\mathfrak{p}_\sigma}= O(N^{-1}) \text{ if }|\sigma|-c_\sigma-\mathfrak{p}_\sigma < 0
\end{align*}
Consequently, if either $ c_\sigma < r/2 $ or $\hat{\mathcal{T}}_\sigma$ is not a forest, then 
$$
N^{c_\sigma - r/2}
N^{|\sigma| - c_\sigma - \mathfrak{p}_\sigma}
= O(N^{-1}).
$$
Thus, to obtain a non zero contribution, we need $c_\sigma=r/2$, and graph $\hat{\mathcal{T}}_\sigma$ to be a forest, which is a collection of disjoint trees. Hence 
\begin{align}\label{eq:power of N 3rd eq}
   & N^{|\sigma|-c_\sigma-\mathfrak{p}_\sigma}
= \mathbf{1}_{\{ \hat{\mathcal{T}}_\sigma\text{ is a forest}\}} + O(N^{-1}),
\end{align}
and
\begin{align}\label{eq:csigma-r/2}
    N^{c_\sigma - r/2} =
\begin{cases}
1, & \text{if } c_\sigma=r/2\\[4pt]
O(N^{-1}), & \text{otherwise}.
\end{cases}
\end{align}
Each partition $\sigma\in \mathcal{P}^\#(V_1,V_2,\dots,V_r)$ induces another partition $\bar\sigma$ of $\{1,2,\dots,r\}$, defined by
$$
i \sim_{\bar\sigma} j
\quad \text{if and only if} \quad
T_i \text{ and } T_j \text{ belong to the same connected component of } \hat{T}_\sigma.
$$
Then we can write \eqref{eq:csigma-r/2} as follows
\begin{align}\label{eq: power of N 1st eq}
    N^{c_\sigma - r/2} =
\begin{cases}
1, & \text{if } \bar\sigma \in \mathcal{P}_2(r),\\[4pt]
O(N^{-1}), & \text{otherwise},
\end{cases}
\end{align}
where $\mathcal{P}_2(r)$ denotes the set of pair partition of $\{1,2,\dots,r\}.$

Thus from equations \eqref{eq: exp of product last},\eqref{eq:power of N 2nd eq}, \eqref{eq:power of N 3rd eq}, and \eqref{eq: power of N 1st eq} we have 
\begin{align}\label{eq: exp}
    &\mathbb{E}\left[\prod_{j=1}^r Z_N(T_j)\right]\notag\\
&=
\sum_{\pi\in\mathcal{P}_2(r)}
\sum_{\substack{\sigma\in\mathcal{P}^{\#}(V_1,\dots,V_r)\\ \bar\sigma=\pi}}
\mathbf{1}_{\{\hat{\mathcal{T}}_\sigma \text{ is a forest of } r/2 \text{ trees}\}}
\,\delta_N(\sigma)
+ O(N^{-1})\notag\\
&=\sum_{\pi\in\mathcal{P}_2(r)}
\prod_{\{i,j\}\in\pi}
\left(
\sum_{\sigma\in\mathcal{P}^{\#}(V_i,V_j)}
\mathbf{1}_{\{\hat{\mathcal{T}}_\sigma \text{ is a tree}\}}
\,\delta_N(\sigma)
\right)
+ O(N^{-1}),
\end{align}
where in the last equality we have used the independence of the entries of the matrix $ A$ to split $\delta_N(\sigma)$, for $\sigma\in \mathcal{P}^{\#}(V_1,\dots,V_r),$ as a product of $\delta_{N}(\sigma)$, where $\sigma\in \mathcal{P}^{\#}(V_{i}, V_{j})$.
We now evaluate the covariance kernel, let $r=2$
\begin{align*}
   & \mathbb{E}[Z_N(T_1)Z_N(T_2)]\notag\\
&=
\sum_{\sigma\in\mathcal{P}^{\#}(V_1,V_2)}
\mathbf{1}_{\{\hat{\mathcal{T}}_\sigma \text{ is a tree}\}}
\,\delta(\sigma)
+ o(1)\notag\\
&=\sum_{\hat{\mathcal{T}}_\sigma \in\mathcal{P}^{\#}(T_1,T_2)} \tau_e[\hat{\mathcal{T}}_\sigma ]+ o(1),
\end{align*}
where $\delta(\sigma) = \lim_{N\to\infty}\delta_N(\sigma)
= \tau^e[\hat{T}_\sigma].$ Here $\tau^e[\hat{T}_\sigma]$ is same as $\tau^e[T]$ defined in the Theorem \ref{Thm:Convergence of normalized trace}(b).

Thus, from equation \eqref{eq: exp} we have 
\begin{align}
    &\mathbb{E}\left[\prod_{j=1}^r Z_N(T_j)\right]\notag\\
&=
\sum_{\pi\in\mathcal{P}_2(r)}
\prod_{\{i,j\}\in\pi}
\left(
\sum_{\sigma\in\mathcal{P}^{\#}(V_i,V_j)}
\mathbf{1}_{\{\hat{\mathcal{T}}_\sigma \text{ is a tree}\}}
\,\delta_N(\sigma)
\right)
+ o(1)\notag\\
&=
\sum_{\pi\in\mathcal{P}_2(r)}
\prod_{\{i,j\}\in\pi}
\sum_{T\in\mathcal{P}^{\#}(T_i,T_j)} \mathbb{E}[Z_N(T_i)Z_N(T_j)]
+  o(1),
\end{align}
which is precisely Wick's formula. Hence, the proof is complete.
\end{proof}

%%%%%%%%%%%%%%%%%%%%%%%%%%%%%%%%%%%%%%%%%%%%%%%%%%%%%%%%%%%%%%%%%%%%%%%%%%%
%%%%%%%%%%%%%%%%%%    Non-Hermitian Matrices with exploding moments   %%%%%%%%%%%%%%%%%%%%%
%%%%%%%%%%%%%%%%%%%%%%%%%%%%%%%%%%%%%%%%%%%%%%%%%%%%%%%%%%%%%%%%%%%%%%%%%%%

\section{Non-Hermitian matrices}
In this section, we consider a non-Hermitian matrix model with
independent entries and exploding moments.
Unlike the previous Section \ref{section:Central Limit Theorem for Generalized Moments for corr matrices}, where correlations between matrix entries
were allowed, here all entries are independent.

Let $M$ be a sequence of non-Hermitian random matrices of order $N \times N$ satisfying some conditions. In particular, we assume

\begin{matrixcondition}[Matrix assumptions]\label{cond:matrixcond_1_NonHer}
Let $M= \frac{1}{\sqrt{N}}X =\frac{1}{\sqrt{N}}[x_{ij}]_{1\le i,j \le N}$ sequence of non-Hermitian real valued random matrices. Assume that the matrix $X$ satisfies the following conditions.
    \begin{enumerate}
        \item $\mathbb{E}[x_{ij}] = 0,\quad \mathbb{E}[x_{ij}^2] = 1$ for all $1 \le i,j \le N.$   
        \item All entries $\{x_{ij}\}_{1\le i,j\le N}$ are independent.
        \item The entries of $X$ have exploding moments in the sense that for all  $k \ge 2$,
$$
\frac{\mathbb{E}[x_{ij}^{k}]}{N^{k/2-\alpha}} \longrightarrow C_{k},\;\;\; i\neq j,
$$
and for all $k \ge 1$,
$$
\frac{\mathbb{E}[|x_{ii}|^{k}]}{N^{k/2-\alpha}} = O(1),
$$
for some $\alpha> 0$, where $(C_{k})_{k \ge 0}$ are finite constants.
  \end{enumerate}
\end{matrixcondition}

As in the previous Section \ref{section:Central Limit Theorem for Generalized Moments for corr matrices}, we expand
$$
\frac{1}{N}\Tr(M_N^k)
=
\sum_{\pi\in\mathcal P(k)} \tau_N^{nh}[T_\pi],
$$
where $T_\pi=(V, E)$ is the directed graph associated with the partition $\pi$ defined as similar to as in Section \ref{section:Central Limit Theorem for Generalized Moments for corr matrices}.

\begin{Thm}[Convergence of normalized trace]\label{thm:Convergence of normalized trace for non-Hermitian matrices with independent exploding entries}
Let $M$ be the matrix same as in Condition \ref{cond:matrixcond_1_NonHer}.
For any partition $\pi\in\mathcal P(k)$, let
$$
\tau_N^{nh}[T_\pi]
=
\frac{1}{N}
\sum_{\phi:V\to[N]\atop\mathrm{injective}}
\prod_{(u,v)\in E}
m_{\phi(u)\phi(v)},
$$
be defined for $M$ same as in \eqref{eq:tauN_def}.
Then, as $N\to\infty$, the following hold.
\begin{enumerate}
      \item For $\alpha>1$ and for every $T_\pi$ we have
    $$
    \mathbb{E}[\tau_N^{nh}[T_\pi]] \longrightarrow 0 .
    $$
    \item For $\alpha=1$ we have
    $$
    \mathbb{E}\big[\tau_N^{nh}[T_\pi]\big]
   \longrightarrow
    \tau^{nh}[T_\pi]
   :=
    \begin{cases}
   \displaystyle\prod_{k\ge2} C_{k}^{\,q_{k}}, & \text{if $T_\pi$ is a fat tree},\\[0.8em]
    0, & \text{otherwise,}
   \end{cases}
   $$
  where $q_{k}$ denote the number of unordered vertex pairs $\{i,j\}$ such that there are exactly $k$ edges between these vertices.
%    \item For $\alpha<1$ we have 
%    $$
% \mathbb{E}\big[\tau_N^0[T_\pi]\big] \sim O(N^{|V|-1-\alpha \mathfrak{p}}),
% $$
% where $|V|$ is the number of vertices of $T_\pi$, and $\mathfrak{p}$ is the number of edges of the tree obtained by forgetting the multiplicity of the edges of the graph $T_\pi$.
\item For $\alpha<1$ we have 
    \begin{align*}
\mathbb{E}\big[\tau_N^{nh}[T_\pi]\big] \sim
\begin{cases}
0, & \parbox[t]{0.6\textwidth}{\setstretch{0.9}\raggedright
$T_\pi$ contains any edge of multiplicity one (in any direction) or any vertex with a single loop (or both),}\\[0.6em]
O(N^{|V|-1-\alpha \mathfrak{p}}), & \text{otherwise,}
\end{cases}
\end{align*}
where $|V|$ is the number of vertices of $T_\pi$, and $\mathfrak{p}$ is the number of edges of the graph obtained by forgetting the multiplicity and orientation of the edges of the graph $T_\pi$.
\end{enumerate}
\end{Thm}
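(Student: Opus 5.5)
The proof will follow the argument used for Theorem \ref{Thm:Convergence of normalized trace}, with simpler combinatorics because all entries of $X$ are now independent.

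\textbf{Step 1 (reduction to one index choice).} The joint law of the entries is invariant under relabelling of indices. Hence
\begin{align*}
\mathbb{E}\big[\tau_N^{nh}[T_\pi]\big]=\frac{(N-1)!}{(N-|V|)!}\,\delta_N(T_\pi), \qquad \delta_N(T_\pi):=\mathbb{E}\Big[\prod_{(u,v)\in E} m_{\phi(u)\phi(v)}\Big],
\end{align*}
for any fixed injective $\phi$.

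\textbf{Step 2 (factorization and the vanishing cases).} Since all entries are independent, $\delta_N$ factors over the distinct ordered pairs $(\phi(u),\phi(v))$. A directed edge $u\to v$ and an edge $v\to u$ now correspond to different, independent random variables $m_{ij}$ and $m_{ji}$. The relevant multiplicity is therefore the number of edges in each fixed direction, and this is the key difference from the elliptic case. If some ordered pair has exactly one edge, or some vertex has exactly one loop, then $\mathbb{E}[m_{ij}]=0$ gives $\delta_N=0$. Otherwise let $p_k$ be the number of vertices with $k\ge2$ loops and $q'_k$ the number of ordered pairs carrying exactly $k\ge2$ edges. Using Condition \ref{cond:matrixcond_1_NonHer}, each such factor equals $N^{-\alpha}$ times a quantity that converges to $C_k$ (off-diagonal) or stays $O(1)$ (loops). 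This gives
\begin{align*}
\mathbb{E}\big[\tau_N^{nh}[T_\pi]\big]=N^{|V|-1-\alpha\mathfrak{p}'}\,\Theta_N\,(1+O(N^{-1})),
\end{align*}
where $\mathfrak{p}'=\sum_k p_k+\sum_k q'_k$ and $\Theta_N$ is bounded.

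The underlying undirected simple graph has $\mathfrak{p}\le\mathfrak{p}'$ edges, with equality exactly when there are no loops and no pair of vertices is joined by edges in both directions. Connectedness of $T_\pi$, which holds because it comes from a closed walk, gives $|V|\le \mathfrak{p}+1$.

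\textbf{Step 3 (the three regimes).}
For $\alpha>1$, the exponent satisfies $|V|-1-\alpha\mathfrak{p}'\le(1-\alpha)\mathfrak{p}<0$ whenever $\mathfrak{p}\ge1$, so the expectation tends to $0$.

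For $\alpha=1$, the exponent $|V|-1-\mathfrak{p}'$ is nonpositive. It is zero if and only if $|V|=\mathfrak{p}+1$ and $\mathfrak{p}=\mathfrak{p}'$. By \cite[Theorem 2.1.4]{west2001introduction}, the first condition means the simple graph is a tree. The second means there are no loops and each tree edge is traversed in only one direction. Combined with the multiplicity condition from Step 2, this says exactly that $T_\pi$ is a fat tree. In that case the limit is $\prod_k C_k^{q_k}$, where $q_k$ counts pairs with exactly $k$ edges. In every other case the limit is $0$.

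For $\alpha<1$, only the vanishing statement and the $O(\cdot)$ bound are claimed, so Step 2 gives the result directly. In the nonvanishing case I will use $N^{-\alpha\mathfrak{p}'}\le N^{-\alpha\mathfrak{p}}$ to obtain the bound $O(N^{|V|-1-\alpha\mathfrak{p}})$.

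\textbf{Main obstacle.} The delicate point is the degenerate partition with $|V|=1$, i.e.\ $T_\pi$ a single vertex carrying $k$ loops. There $\mathfrak{p}$ counts one loop-edge. If the loop multiplicity is at least $2$, this term is $O(N^{-\alpha})$ and is handled correctly, provided loops are counted as edges of the simple graph.

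The remaining care goes into the fat-tree identification in the case $\alpha=1$. A tree edge traversed in both directions contributes $N^{-2}$ instead of $N^{-1}$. This is exactly what excludes the mixed-direction examples in Figure \ref{fig:our_graph2_comparison}. I would verify this step explicitly.
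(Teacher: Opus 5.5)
Your proposal is correct and follows the paper's route: reduce to a single injective labelling, factor the expectation over independent entries to isolate the power of $N$, and use $|V|\le\mathfrak{p}+1$, with equality exactly for trees. Your ordered-pair count $\mathfrak{p}'\ge\mathfrak{p}$ is more careful than the paper's displayed factorization, which assigns $\mathbb{E}[x_{12}^k]$ to each unordered pair and so conflates the independent entries $x_{12}$ and $x_{21}$; your argument therefore actually derives the one-direction condition that the paper only asserts, and since a closed walk on a tree crosses every edge equally often in each direction, this fat-tree case never occurs for $T_\pi$, so the limit in (2) is always $0$.
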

\begin{proof}
    All the notations used in this section are borrowed from the Section \ref{section:Central Limit Theorem for Generalized Moments for corr matrices}. For a fixed partition $\pi$, let $T_\pi=(V,E)$ be the associated directed graph. Then we have
% $$
% \tau_N^0[T_\pi]
% =\frac{1}{N}\sum_{\phi:V\rightarrow[N]}
% \mathbb{E}\bigg[\prod_{(u,v)\in E} a_{\phi(u)\phi(v)}\bigg].
% $$
\begin{align}\label{eq: tauN_NonHer}
\mathbb{E}\big[\tau_N^{nh}[T_\pi]\big]
% &=\frac{1}{N}\sum_{\substack{\phi:V\to [N]\\ \text{injective}}}
% \mathbb{E}\bigg[\prod_{(u,v)\in E} a_{\phi(u)\phi(v)}\bigg]\notag\\
    &=\frac{(N-1)!}{(N-|V|)!}\delta_N^0(T(m_N)),
\end{align}
where $\delta_N^0(T(m_N))=\mathbb{E}\bigg[\prod_{(u,v)\in E} m_{\phi(u)\phi(v)}\bigg].$
Since $\mathbb{E}[x_{ij}] = 0$, we have $\delta_N^0(T(m_N)) = 0$ whenever there exists an ordered pair of vertices $(u, v)$ such that they are connected by a directed edge of multiplicity one (in any direction), or there exists a vertex having a loop of multiplicity one. Therefore, to obtain a non zero contribution, we henceforth assume that the graphs $T_\pi$ do not contain any ordered pair $(u, v)$ connected by a directed edge of multiplicity one (in any direction), and no vertex has a loop of multiplicity one. We denote such graphs by $\hat{T}_{\pi}$. The reduced simple graph is denoted by $\hat{\mathcal{T}}_{\pi},$ which is obtained after ignoring the direction and multiplicity of the edges of $\hat{T}_{\pi}.$

By Condition \ref{cond:matrixcond_1_NonHer}, all the entries of the matrix are independent, all diagonal entries are identically distributed, and all off-diagonal entries are identically distributed. Let, for $k \ge 2$, let $p_k$ denote the number of vertices carrying exactly $k$ loops. For $k\ge 2$ let $q_{k}$ denote the number of unordered vertex pairs $\{i,j\}$ such that there are exactly $k$ edges between these vertices. Then 
\begin{align}
\label{eq: deltaN_NonHer}
    &\delta_N^0(T(m_N))
=
N^{-\alpha \mathfrak{p}}
\left(
\prod_{k\ge2} \left(N^{-\frac{k}{2}+\alpha }\mathbb{E}[|x_{11}|^k]\right)^{p_k}
\right)
\left(
\displaystyle\prod_{k\ge2}
\Big(
N^{-\frac{k}{2}+\alpha }
\mathbb{E}[x_{12}^{k}]
\Big)^{q_{k}}
\right),
\end{align}
where
$$
\mathfrak{p}
:=
\sum_{k\ge2} p_k
+
\displaystyle\sum_{k\ge2} q_{k}
$$
is the number of edges of $\hat{\mathcal{T}}_\pi.$ Now, by using \eqref{eq: deltaN_NonHer} in \eqref{eq: tauN_NonHer} we have
\begin{align}\label{eq: tauN_Final_NonHer}
\mathbb{E}\big[\tau_N^{nh}[\hat{T}_\pi]\big]
    %&\sim \frac{(N-1)!}{(N-|V|)!}\frac{1}{N^{\alpha \mathfrak{p}}}\left(
%\prod_{k\ge2} \left(N^{-\frac{k}{2}+\alpha }\mathbb{E}[|x_{11}|^k]\right)^{p_k}
%\right)
%\left(
%\displaystyle\prod_{k\ge2}
%\Big(
%N^{-\frac{k}{2}+\alpha }
%\mathbb{E}[x_{12}^k]
%\Big)^{q_{k}}
%\right)\notag\\
 &=N^{|V|-(\alpha \mathfrak{p}+1)}\left(
\prod_{k\ge2} \left(N^{-\frac{k}{2}+\alpha }\mathbb{E}[|x_{11}|^k]\right)^{p_k}
\right)
\left(
\displaystyle\prod_{k\ge2}
\Big(
N^{-\frac{k}{2}+\alpha }
\mathbb{E}[x_{12}^k]
\Big)^{q_{k}}
\right)
+o(1).
\end{align}
Since 
for all $k \ge 2$ 
% $$
%  \frac{\int z_1^kz_2^ld\mu_{X}(z)}{N^{(k+l)/2-1}} \longrightarrow C_{k,l}, \quad \frac{\int |z|^kd\nu_{X}(z)}{N^{k/2-1}} = O(1)
% $$ 
$$
N^{-\frac{k}{2}+\alpha }\mathbb{E}[x_{12}^k]
\longrightarrow C_{k}, 
$$
and for all $k\ge 2$ 
$$
N^{-\frac{k}{2}+\alpha }\mathbb{E}[|x_{11}|^k] = O(1),
$$
by Condition \ref{cond:matrixcond_1_NonHer}, the asymptotic behavior of \eqref{eq: tauN_Final_NonHer}
depends the power of $N$ that is on $N^{|V|-1-\alpha \mathfrak{p}}.$ Now, by following the same set of arguments as outlined in Theorem \ref{Thm:Convergence of normalized trace} we conclude the Theorem \ref{thm:Convergence of normalized trace for non-Hermitian matrices with independent exploding entries}. However, one should keep in mind that the underlying graph here is slightly different from the one used in \ref{Thm:Convergence of normalized trace}. Here, for any pair of vertices $\{u, v\},$ the edges $u\to v$ and $v\to u$ are independent, whereas those were correlated in the former case. This is reflected in the part (b) of the Theorems \ref{Thm:Convergence of normalized trace} and \ref{thm:Convergence of normalized trace for non-Hermitian matrices with independent exploding entries}. In the first case, it was thick tree, which is changed to fat tree in the current theorem.
\end{proof}
% Now, Theorem \ref{thm:CLT for normalized traces} holds for matrices which satisfy the Condition \ref{cond:matrixcond_1_NonHer} with $\tau_N^0[T_\pi]$ as defined in the Corollary \ref{thm:Convergence of normalized trace for non-Hermitian matrices with independent exploding entries}. This is explicitly stated in Theorem \ref{thm:CLT_iid_exploding}.
%%%%%%%%%%%%%%%
We now state CLT for the normalized traces.
\begin{Thm}[CLT for the normalized traces]\label{thm:CLT_iid_exploding}
Let $M$ be the matrix same as in Condition \ref{cond:matrixcond_1_NonHer} with $\alpha=1$. For any partition $\pi\in\bigcup_{k}\mathcal{P}(k)$, let $Z_N(T_\pi)$ be defined similar as in \eqref{eq:centered and normalized random variables Z_N} for matrix $M$.
Then the family
$
\big(Z_N(T_\pi)\big)_{\pi\in\bigcup_{k}\mathcal{P}(k)}
$
converges, as $N\to\infty$, to a centered Gaussian process
$\big(z(T_\pi)\big)_{\pi\in\bigcup_{k}\mathcal{P}(k)}$.
Moreover, for any two partitions $\pi_1,\pi_2$,
$$
\mathrm{Cov}\left(z(T_{\pi_1}),z(T_{\pi_2})\right)
=
\sum_{T\in\mathcal P^{\#}(T_{\pi_1},T_{\pi_2})}
\tau^{nh}[T],
$$
where $\mathcal{P}^{\#}(T_{\pi_1},T_{\pi_2})$ denotes the collection of graphs obtained by disjoint copies of the graphs $T_{\pi_1}$ and $T_{\pi_2}$ and gluing them with at least one common edge, and where $\tau^{nh}[T]$ is same as defined in the Theorem \ref{thm:Convergence of normalized trace for non-Hermitian matrices with independent exploding entries}.
\end{Thm}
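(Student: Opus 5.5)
We follow the proof of Theorem \ref{thm:CLT for normalized traces} step by step, substituting Theorem \ref{thm:Convergence of normalized trace for non-Hermitian matrices with independent exploding entries} for Theorem \ref{Thm:Convergence of normalized trace}. Fix partitions $\pi_1,\dots,\pi_r$ with graphs $T_j=(V_j,E_j)$. Since a family is Gaussian once its joint moments obey Wick's formula, we need to show that $\mathbb{E}[\prod_{j=1}^r Z_N(T_j)]$ converges to $\sum_{\pi\in\mathcal P_2(r)}\prod_{\{i,j\}\in\pi}\lim_N\mathbb{E}[Z_N(T_i)Z_N(T_j)]$.

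\textbf{Step 1: expansion by vertex partitions.} Expand the product over all partitions $\sigma\in\mathcal P(V_1,\dots,V_r)$ of the joint vertex set whose blocks contain at most one vertex from each $V_j$. This gives
\begin{align*}
\mathbb{E}\Big[\prod_{j=1}^r Z_N(T_j)\Big]=\sum_{\sigma} N^{-r/2+|\sigma|}\,\omega_N(\sigma)\,(1+O(N^{-1})).
\end{align*}
Because all entries $m_{ij}$ are independent and centered, $\omega_N(\sigma)$ vanishes in two situations. The first is when some $T_i$ shares no edge with any other $T_j$ under $\sigma$. Here an edge means an \emph{ordered} pair $(\phi(u),\phi(v))$, since $m_{uv}$ and $m_{vu}$ are now independent. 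The second is when the merged graph $T_\sigma$ has a directed edge of multiplicity one or a loop of multiplicity one. So only $\sigma\in\mathcal P^{\#}(V_1,\dots,V_r)$ with these two defects absent contribute.

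\textbf{Step 2: power counting.} Expand $\omega_N(\sigma)$ over subsets $B\subseteq\{1,\dots,r\}$ as before. For each $B$, factor the expectation using the moment counts $p_k$ (vertices with $k$ loops) and $q_k$ (unordered pairs with exactly $k$ edges), as in \eqref{eq: deltaN_NonHer} with $\alpha=1$. This writes each term as $N^{-\mathfrak p_B-\mathfrak p_{B^\complement}}\delta_N(B)\delta_N(B^\complement)$, where the $\delta_N$ factors are bounded. We then have:
\begin{enumerate}
\item $\mathfrak p_\sigma-\mathfrak p_B-\mathfrak p_{B^\complement}\le0$, with equality only when $B$ or $B^\complement$ is the full index set.
\item $c_\sigma\le r/2$, because every component of $\hat T_\sigma$ contains at least two of the $T_j$.
\item $|\sigma|-c_\sigma-\mathfrak p_\sigma\le 0$, by the cycle-rank count \cite[Theorem 2.1.4]{west2001introduction}.
\end{enumerate}
Since all exponents are integers, every term with a strict inequality is $O(N^{-1/2})$ or smaller. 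The surviving $\sigma$ are exactly those for which $\bar\sigma$ is a pair partition and $\hat{\mathcal T}_\sigma$ is a forest of $r/2$ trees.

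\textbf{Step 3: factorization and identification of the covariance.} For such $\sigma$, distinct components share no matrix entries. Independence therefore lets $\delta_N(\sigma)$ split into a product over the pairs $\{i,j\}\in\bar\sigma$. The sum then factors as $\sum_{\pi\in\mathcal P_2(r)}\prod_{\{i,j\}\in\pi}\big(\sum_{\sigma\in\mathcal P^{\#}(V_i,V_j)}\mathbf 1_{\text{tree}}\delta_N(\sigma)\big)+o(1)$.

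To identify each factor, take $r=2$. For a surviving $\sigma$, the glued graph $T$ is a tree in which every directed edge has multiplicity at least two, so $T$ is a fat tree. By Theorem \ref{thm:Convergence of normalized trace for non-Hermitian matrices with independent exploding entries}(b), $\delta_N(\sigma)\to\prod_k C_k^{q_k}=\tau^{nh}[T]$. Conversely, a non-fat gluing either has a single directed edge, giving zero, or is not a tree, giving a vanishing contribution. Hence the pair factor converges to $\sum_{T\in\mathcal P^{\#}(T_{\pi_1},T_{\pi_2})}\tau^{nh}[T]$, which is both the covariance and the Wick pairing weight.

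\textbf{Main obstacle.} The delicate point is the vanishing criterion in Step 1 together with the diagonal entries. Because $m_{uv}$ and $m_{vu}$ are independent here, a pair of opposite single edges $u\to v$, $v\to u$ now gives zero, whereas in the elliptic case it did not. One must check that this stricter criterion is compatible with the edge-sharing condition in $\mathcal P^{\#}$, meaning sharing must be read as sharing an ordered edge. For loops, only $\mathbb{E}[|x_{ii}|^k]=O(N^{k/2-1})$ is assumed, and $\mathbb{E}[x_{ii}]=0$ gives zero for single loops. Each loop-vertex with multiplicity at least two still costs a factor $N^{-1}$ and adds no vertex. It therefore only lowers the exponent, so graphs with loops are not trees of the required form and vanish in the limit. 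This is what guarantees that no $C_{k}$-free diagonal constants appear in the covariance.
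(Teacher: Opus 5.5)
Your route is the paper's own. The paper proves this theorem only by pointing to the argument for Theorem \ref{thm:CLT for normalized traces}, and your Steps 1--3 are that argument, with the edge-sharing and single-edge vanishing criteria correctly re-read for ordered pairs.

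One step, however, fails as written. In Step 2 you pay the factor $N^{-1}$ once per \emph{unordered} vertex pair, with $q_k$ and $\mathfrak p$ taken as in \eqref{eq: deltaN_NonHer}. In this model $m_{uv}$ and $m_{vu}$ are independent. So a pair carrying $s\ge2$ edges $u\to v$ and $t\ge2$ edges $v\to u$ contributes $\mathbb{E}[m_{12}^s]\,\mathbb{E}[m_{21}^t]\sim C_sC_t\,N^{-2}$, not $N^{-1}\cdot N^{-(s+t)/2+1}\mathbb{E}[x_{12}^{s+t}]$. Such $\sigma$ pass every test you impose: $c_\sigma=r/2$, $\hat{\mathcal T}_\sigma$ is a tree after forgetting orientation, and there is no directed edge of multiplicity one. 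Step 3 then either assigns them the weight $C_{s+t}$ (if you call them fat trees) or has no reason to discard them (if you do not).

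Concretely, take $\pi=\{\{1\},\{2\}\}$, so $Z_N(T_\pi)=N^{-1/2}\sum_{a\ne b}m_{ab}m_{ba}$. Both ways of gluing two copies of $T_\pi$ along a common edge produce the two-vertex graph with two edges in each direction. Your count makes both gluings leading order with weight $C_4$ each, predicting limiting variance $2C_4$, which is nonzero in general. Directly, however, $\mathbb{E}[Z_N(T_\pi)^2]=N^{-1}\cdot N(N-1)\cdot 2N^{-2}\to0$. Your ``main obstacle'' paragraph disposes of opposite \emph{single} edges but not of opposite doubled ones. This is exactly where the independent model departs from the elliptic one: there the pair $(x_{ij},x_{ji})$ is a single unit, and the unordered count is correct.

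The repair is to let $\mathfrak p$ count distinct matrix entries, i.e.\ distinct ordered pairs $(u,v)$ together with loop vertices.
\begin{itemize}
\item Then $\mathfrak p_\sigma$ exceeds the number of edges of the undirected simple graph by the number of pairs joined in both directions.
\item Hence $|\sigma|-c_\sigma-\mathfrak p_\sigma\le 0$, with equality iff every component is a loop-free tree in which all edges between adjacent vertices point the same way.
\item The surviving components are then exactly the fat trees in the sense of the paper's figure: parallel edges only, each of multiplicity at least two.
\item For these, $\delta_N(\sigma)\to\prod_k C_k^{q_k}$ genuinely holds, and with this reading of $\tau^{nh}$ the rest of your argument goes through.
\end{itemize}
The same unordered-pair count appears in \eqref{eq: deltaN_NonHer} and in part (b) of Theorem \ref{thm:Convergence of normalized trace for non-Hermitian matrices with independent exploding entries}. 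So the paper's ``similar argument'' needs the same correction.
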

\begin{proof}
    This follows by a similar argument to that in the proof of Theorem \ref{thm:CLT for normalized traces}.
\end{proof}
\begin{rem}
 Theorem \ref{thm:Convergence of normalized trace for non-Hermitian matrices with independent exploding entries} and Theorem \ref{thm:CLT_iid_exploding}, stated here, are not corollaries of Theorem \ref{Thm:Convergence of normalized trace} and Theorem \ref{thm:CLT for normalized traces}, respectively. This is because, in Theorem \ref{Thm:Convergence of normalized trace} and Theorem \ref{thm:CLT for normalized traces}, we have $C_{1,1} = \mathbb{E}[x_{ij}x_{ji}] \neq 0$ for $i \neq j$. But in Theorem \ref{thm:Convergence of normalized trace for non-Hermitian matrices with independent exploding entries}, $\mathbb{E}[x_{ij}x_{ji}]=\mathbb{E}[x_{ij}]\mathbb{E}[x_{ji}]=0$ for $i \neq j,$ because of the independence of the $(i,j)th$ and $(j,i)th$ entries.
\end{rem}

%%%%%%%%%%%%%%%%%%%%%%%%%%%%%%%%%%%%%%%%%%%%%%%%%%%%%%%%%%%%%%%%%%%%%%%%%%%
%%%%%%%%%%%%%%%%%%   Correlated block diagonal matrices  %%%%%%%%%%%%%%%%%%%%%
%%%%%%%%%%%%%%%%%%%%%%%%%%%%%%%%%%%%%%%%%%%%%%%%%%%%%%%%%%%%%%%%%%%%%%%%%%%

\section{Correlated block diagonal matrices with exploding moments}\label{sec:CLT for LES of block diagonal matrices with Exploding moments}
We consider a block diagonal matrix
\begin{align}\label{eqn: main definition of block matrix M}
    M=\left(\begin{array}{ll}M^{(1)} & 0 \\ 0 & M^{(2)}\end{array}\right)_{2N \times 2N},
\end{align}
where, for $t=1,2$, 
$$
M^{(t)}=[m_{ij}^{(t)}]_{1\le i,j\le N}=\frac{1}{\sqrt{N}}[x_{ij}^{(t)}]_{1\leq i,j \leq N }.
$$  We assume the following.
\begin{matrixcondition}\label{cond: Correlated Block matrix}
    \begin{enumerate}
    \item For $t=1,2$, let
    $$
    M^{(t)}=\frac{1}{\sqrt{N}}X^{(t)}=\frac{1}{\sqrt{N}}\,[x_{ij}^{(t)}]_{1\le i,j\le N}
    $$
    be non-Hermitian real-valued random matrices with independent entries satisfying Condition \ref{cond:matrixcond_1_NonHer}.

    \item For each $(i,j)$, where $i\neq j,$ the $(i,j)$th entry of $X^{(1)}$ is correlated with the corresponding $(i,j)$th entry of $X^{(2)}$ in the sense that, for all integers $k,l \ge 0$ with $k+l\geq 2$,
    $$
    \frac{\mathbb{E}\!\left[(x_{ij}^{(1)})^k (x_{ij}^{(2)})^l\right]}
    {N^{(k+l)/2-\alpha}}
    \longrightarrow C_{k,l},
    $$
     where $C_{k,0}=C_{0,l}=C_k$. For all $k \ge 1$ and $t=1,2$, the diagonal entries satisfy
    $$
    \frac{\mathbb{E}\left[|x_{ii}^{(t)}|^{k}\right]}
    {N^{k/2-\alpha}} = O(1),
    $$
    where $\alpha> 0$ and $(C_{k,l})_{k,l\ge0}$ are finite constants.
\end{enumerate} 
\end{matrixcondition}
The normalized trace of $M^k$ can be written as
\begin{align}\label{eq:block_trace_identity}
\frac{1}{N}\operatorname{Tr}(M^k)
&=
\frac{1}{N}\operatorname{Tr}((M^{(1)})^k)
+
\frac{1}{N}\operatorname{Tr}((M^{(2)})^k)
\notag\\
&= \frac{1}{N} \sum_{i_1,\dots,i_k = 1}^N 
(m_{i_1 i_2}^{(1)} m_{i_2 i_3}^{(1)} \cdots m_{i_k i_1}^{(1)}+m_{i_1 i_2}^{(2)} m_{i_2 i_3}^{(2)} \cdots m_{i_k i_1}^{(2)}).
\end{align}
Let $\mathcal{P}(k)$ denote the set of partitions of $\{1,2,\dots,k\}$. For each partition $\pi\in\mathcal P(k)$, define
$$
S_\pi
=
\left\{
(i_1,\dots,i_k)\in[N]^k:
 i_m=i_n \Longleftrightarrow  m\sim_\pi n
\right\}.
$$
%%%%%%%%%%%%%%%%%%%%%%%%%%%%%%%%%%%%%%%%%%%%%%%%%%%%%%%%%%%%%%%%%%%%%%%%%%%%

Then from \eqref{eq:block_trace_identity} we have
\begin{align}
\frac{1}{N}\operatorname{Tr}(M^k)
&=
\sum_{\pi\in\mathcal P(k)} \left(\tau_N^{(1)}[T_\pi]+
\tau_N^{(2)}[T_\pi]
\right),
\end{align}
where
$$
\tau_N^{(t)}[T_\pi]
=
\frac{1}{N}
\sum_{\substack{\phi:V\to[N]\\ \text{injective}}}
\prod_{(u,v)\in E}
m^{(t)}_{\phi(u)\phi(v)}.
$$

Here $T_\pi=(V, E)$ is the directed graph associated with $\pi$, constructed exactly as in Section \ref{section:Central Limit Theorem for Generalized Moments for corr matrices}.

\begin{defn}[Colored fat tree]
A connected directed graph $T=(V, E)$, whose each edge is colored by red or blue, is called colored fat tree if it satisfies the following properties.
Any directed edge $u\to v$ is accompanied by at least one more edge of any color in the same direction, and there is no edge in the reverse direction. It does not contain any vertex with a loop. Additionally, the underlying simple graph $\mathcal{T}=(V, \mathcal{E}),$ obtained by ignoring orientation and multiplicity satisfies $|V|=|\mathcal{E}|+1$.
\end{defn}

\begin{figure}[htbp]
\centering

% -------- First row (valid examples) --------
\begin{minipage}{0.45\textwidth}
\centering
\begin{tikzpicture}[->, >=stealth, thick, scale=0.9]

\node[circle, fill, inner sep=2pt] (v1) at (0,0) {};
\node[circle, fill, inner sep=2pt] (v2) at (1.8,1.2) {};
\node[circle, fill, inner sep=2pt] (v3) at (3.6,0) {};
\node[circle, fill, inner sep=2pt] (v4) at (1.8,-1.2) {};

% valid edges
\draw[blue, bend left=20] (v1) to (v2);
\draw[red, bend left=40] (v1) to (v2);

\draw[blue, bend left=20] (v2) to (v3);
\draw[red, bend left=40] (v2) to (v3);

\draw[blue, bend left=20] (v1) to (v4);
\draw[red, bend left=40] (v1) to (v4);

\end{tikzpicture}

\vspace{2mm}
\small (a) Colored fat tree
\end{minipage}
\hfill
\begin{minipage}{0.45\textwidth}
\centering
\begin{tikzpicture}[->, >=stealth, thick, scale=0.9]

\node[circle, fill, inner sep=2pt] (v1) at (0,0) {};
\node[circle, fill, inner sep=2pt] (v2) at (1.8,1.2) {};
\node[circle, fill, inner sep=2pt] (v3) at (3.6,0) {};
\node[circle, fill, inner sep=2pt] (v4) at (1.8,-1.2) {};
% \node[circle, fill, inner sep=2pt] (v5) at (5.4,1.2) {};

% valid edges
\draw[blue, bend left=20] (v1) to (v2);
\draw[red, bend left=40] (v1) to (v2);
% valid edges
\draw[blue, bend left=60] (v1) to (v2);
\draw[red, bend left=80] (v1) to (v2);

\draw[blue, bend left=20] (v2) to (v3);
\draw[red, bend left=40] (v2) to (v3);

\draw[blue, bend left=20] (v1) to (v4);
\draw[red, bend left=40] (v1) to (v4);

% \draw[blue, bend left=20] (v3) to (v5);
% \draw[red, bend left=40] (v3) to (v5);

\end{tikzpicture}

\vspace{2mm}
\small (b) Colored fat tree
\end{minipage}

\vspace{5mm}

% -------- Second row (violations) --------
\begin{minipage}{0.45\textwidth}
\centering
\begin{tikzpicture}[->, >=stealth, thick, scale=0.9]

\node[circle, fill, inner sep=2pt] (v1) at (0,0) {};
\node[circle, fill, inner sep=2pt] (v2) at (1.8,1.2) {};
\node[circle, fill, inner sep=2pt] (v3) at (3.6,0) {};

% violation: multiplicity one
\draw[red, ->] (v1) to (v2);

\draw[blue, bend left=20] (v2) to (v3);
\draw[blue, bend left=40] (v2) to (v3);

\end{tikzpicture}

\vspace{2mm}
\small (c) Violation: edge with multiplicity one
\end{minipage}
\hfill
\begin{minipage}{0.45\textwidth}
\centering
\begin{tikzpicture}[->, >=stealth, thick, scale=0.9]

\node[circle, fill, inner sep=2pt] (v1) at (0,0) {};
\node[circle, fill, inner sep=2pt] (v2) at (1.8,1.2) {};

\draw[blue, bend left=20] (v1) to (v2);
\draw[blue, bend left=40] (v1) to (v2);

% violation: loop
\draw[red] (v2) edge[loop above] ();

\end{tikzpicture}

\vspace{2mm}
\small (d) Violation: loop present
\end{minipage}

\vspace{5mm}

% -------- Third row (violations) --------
\begin{minipage}{0.45\textwidth}
\centering
\begin{tikzpicture}[->, >=stealth, thick, scale=0.9]

\node[circle, fill, inner sep=2pt] (v1) at (0,0) {};
\node[circle, fill, inner sep=2pt] (v2) at (1.8,1.2) {};
\node[circle, fill, inner sep=2pt] (v3) at (3.6,0) {};

% cycle
\draw[blue, bend left=20] (v1) to (v2);
\draw[blue, bend left=40] (v1) to (v2);

\draw[blue, bend left=20] (v2) to (v3);
\draw[blue, bend left=40] (v2) to (v3);

\draw[red, bend left=20] (v3) to (v1);
\draw[red, bend left=40] (v3) to (v1);

\end{tikzpicture}

\vspace{2mm}
\small (e) Violation: cycle present
\end{minipage}
\hfill
\begin{minipage}{0.45\textwidth}
\centering
\begin{tikzpicture}[->, >=stealth, thick, scale=0.9]

\node[circle, fill, inner sep=2pt] (v1) at (0,0) {};
\node[circle, fill, inner sep=2pt] (v2) at (1.8,1.2) {};
\node[circle, fill, inner sep=2pt] (v3) at (3.6,0) {};
\node[circle, fill, inner sep=2pt] (v4) at (1.8,-1.2) {};

\draw[blue, bend left=20] (v1) to (v2);
\draw[blue, bend left=40] (v1) to (v2);

\draw[blue, bend left=20] (v2) to (v3);
\draw[blue, bend left=40] (v2) to (v3);

\draw[blue, bend left=20] (v1) to (v4);
\draw[blue, bend left=40] (v1) to (v4);

% loop violation
\draw[red] (v2) edge[loop above] ();
\draw[red] (v2) edge[loop below] ();

\end{tikzpicture}

\vspace{2mm}
\small (f) Violation: loops present
\end{minipage}

\caption{
Examples illustrating colored fat trees (panels (a) and (b)) and violations of the defining conditions (panels (c)--(f)).
}
\label{fig:colored_fat_tree}
\end{figure}
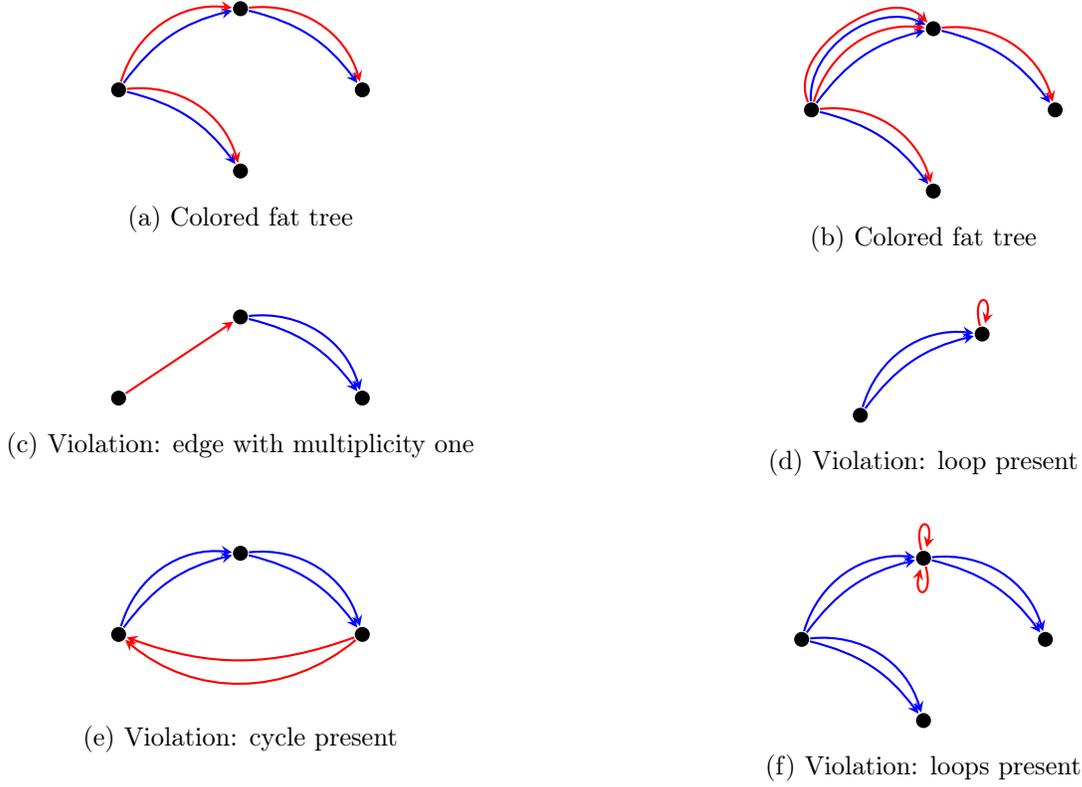

\subsection{CLT for the normalized traces}

To study fluctuations of the matrix $M$, we consider
\begin{align*}
    &\Big\{Z_N(k)\Big\}_{k \ge 1}\\
&= \left\{\frac{1}{\sqrt{N}}\operatorname{Tr}(M^k)
- \mathbb{E}\left[\frac{1}{\sqrt{N}}\operatorname{Tr}(M^k)\right]\right\}_{k \ge 1}\\
&=\left\{\frac{1}{\sqrt{N}}\Big(\operatorname{Tr}(M^{(1)})^k+\operatorname{Tr}(M^{(2)})^k\Big)
- \mathbb{E}\left[\frac{1}{\sqrt{N}}\Big(\operatorname{Tr}(M^{(1)})^k+\operatorname{Tr}(M^{(2)})^k\Big)\right]\right\}_{k \ge 1}.
\end{align*}
To prove the convergence of $\Big\{Z_N(k)\Big\}_{k \ge 1}$ to a Gaussian process, it is sufficient to prove the convergence of $\Big\{Z_N(T_{\pi})\Big\}_{\pi \in \mathcal{P}(k)},$ where
 \begin{align}\label{eq: Z_N T pi}
    &Z_N(T_{\pi})\\
&= \sqrt{N}\bigg(\big(\tau_N^{(1)}[T_{\pi}]+\tau_N^{(2)}[T_{\pi}]\big) - \big(\mathbb{E}[\tau_N^{(1)}[T_{\pi}]]+\mathbb{E}[\tau_N^{(2)}[T_{\pi}]]\big)\bigg)\notag\\
&= \sqrt{N}\Big(\tau_N^{(1)}[T_{\pi}] - \mathbb{E}[\tau_N^{(1)}[T_{\pi}]]\Big)+\sqrt{N}\Big(\tau_N^{(2)}[T_{\pi}]-\mathbb{E}[\tau_N^{(2)}[T_{\pi}]]\Big)\notag\\
:&=
Z_N^{(1)}(T_{\pi})
+
Z_N^{(2)}(T_{\pi}),\notag
\end{align}
to a Gaussian process, since
\begin{align*}
    \Big\{Z_N(k)\Big\}_{k \ge 1}= \Bigg\{\sum_{\pi \in \mathcal{P}(k)}Z_N\Big(T_{\pi}\Big)\Bigg\}_{k\ge 1}.
\end{align*}

We now state the CLT for the generalized moments of the matrix $M.$
\begin{Thm}[CLT for the normalized traces]\label{thm:CLT for normalized traces for correlated block diagonal matrix M}
Let $M$ be the block diagonal matrix defined in 
\eqref{eqn: main definition of block matrix M} satisfying 
Condition \ref{cond: Correlated Block matrix} with $\alpha=1$. 
For any partition $\pi\in \bigcup_{k\ge1}\mathcal P(k)$, let $Z_N(T_{\pi})$ be defined as in \eqref{eq: Z_N T pi}. Then the family of random variables
$
\big(Z_N(T_\pi)\big)_{\pi\in\bigcup_{k}\mathcal{P}(k)}
$
converges to a centered Gaussian process $\big(z(T_\pi)\big)_{\pi\in\bigcup_{k}\mathcal{P}(k)}$ as $N \rightarrow \infty$. 
Moreover, for any two partitions $\pi_1,\pi_2$, the limiting covariance is given by
\begin{align}
\mathrm{Cov}\left(z(T_{\pi_1}),z(T_{\pi_2})\right)
&=
\sum_{r_1,r_2 \in \{1,2\}}\sum_{T \in\mathcal{P}^{\#}(T_{\pi_1}^{(r_1)},T_{\pi_2}^{(r_2)})} \tau^b[T],
\end{align}
where the expression of $\tau^b[T]$ is given in \eqref{eq:covariance kerne for block matrix model}. Here $\mathcal{P}^{\#}(T_{\pi_1}^{(r_1)}, T_{\pi_2}^{(r_2)})$ denotes the collection of graphs obtained by disjoint copies of the graphs $T_{\pi_1}^{(r_1)}$ and $T_{\pi_2}^{(r_2)}$ and gluing them with at least one common edge. 
\end{Thm}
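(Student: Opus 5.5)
The plan is to reduce the statement to the method of moments already carried out in the proof of Theorem \ref{thm:CLT for normalized traces}, now with colored graphs. By \eqref{eq: Z_N T pi}, $Z_N(T_\pi)=Z_N^{(1)}(T_\pi)+Z_N^{(2)}(T_\pi)$. By multilinearity, for partitions $\pi_1,\dots,\pi_r$,
$$\mathbb{E}\Big[\prod_{j=1}^r Z_N(T_{\pi_j})\Big]=\sum_{c:\{1,\dots,r\}\to\{1,2\}}\mathbb{E}\Big[\prod_{j=1}^r Z_N^{(c(j))}(T_{\pi_j})\Big].$$
So it suffices to show that the joint family $\big(Z_N^{(t)}(T_\pi)\big)_{t\in\{1,2\},\,\pi}$ satisfies an asymptotic Wick formula. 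The limit is then jointly Gaussian, and sums of jointly Gaussian variables are Gaussian. The covariance of $z(T_{\pi_1})$ and $z(T_{\pi_2})$ is then the sum over $r_1,r_2\in\{1,2\}$ of the pairwise covariances, which is the claimed formula.

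To handle a colored product, I regard $T_j^{(c(j))}$ as the graph $T_{\pi_j}$ with every edge colored $c(j)$ (red for $1$, blue for $2$). As before, I expand over $\sigma\in\mathcal{P}(V_1,\dots,V_r)$, giving the prefactor $N^{-r/2}N!/(N-|\sigma|)!$ times $\omega_N(\sigma)$. The key input is the dependence structure from Condition \ref{cond: Correlated Block matrix}: after merging, an entry $m^{(t)}_{\phi(u)\phi(v)}$ depends only on entries sitting on the \emph{same ordered pair} $(\phi(u),\phi(v))$, of either color. Entries on reversed pairs $(v,u)$ are independent, since each $X^{(t)}$ has independent entries and the cross-correlation is only between $x^{(1)}_{ij}$ and $x^{(2)}_{ij}$. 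Consequently, for a merged graph $T_B$,
$$\mathbb{E}\Big[\prod m\Big]=\prod_{\text{loops}}(\cdot)\prod_{(u,v)\ \text{ordered}}\mathbb{E}\big[(m^{(1)}_{12})^{k_{uv}}(m^{(2)}_{12})^{l_{uv}}\big],$$
where $k_{uv}$ and $l_{uv}$ are the numbers of red and blue edges from $u$ to $v$. This vanishes if some ordered pair carries a total multiplicity of one or some vertex has a single loop. Each surviving ordered pair or loop vertex contributes $N^{-1}$ times a bounded factor, which converges to $C_{k_{uv},l_{uv}}$ (with $C_{k,0}=C_{0,k}=C_k$).

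With $\mathfrak p$ now counting the ordered-pair classes and loop vertices, the exponent bookkeeping of \eqref{eq: exp of product last} repeats verbatim. The inclusion–exclusion over $B$ forces $B=\{1,\dots,r\}$. One has $c_\sigma\le r/2$, and the cycle count $\mathfrak p_\sigma+c_\sigma-|\sigma|\ge 0$, so the surviving $\sigma$ are those where $\bar\sigma$ is a pair partition and each component is a tree. One extra point must be checked here: two opposite edges $u\to v$ and $v\to u$ in the reduced simple graph count as \emph{two} factors of $N^{-1}$ but only one simple edge, so they lose a power of $N$. Also, loops in a tree component cost $N^{-1}$ without adding vertices. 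Hence in the limit each component is a colored fat tree in the sense of the Definition above. Its weight is
$$\tau^b[T]=\prod_{(u,v)}C_{k_{uv},l_{uv}},$$
which is what I expect \eqref{eq:covariance kerne for block matrix model} to be. Factorizing over the pairs of $\bar\sigma$ by independence of disjoint entries gives Wick's formula.

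I expect the main obstacle to be the precise exponent count in the colored setting. The difficulty is showing that the ordered-pair count $\mathfrak p_\sigma$ exceeds the number of simple edges unless the graph has neither reversed pairs nor loops. This must be done carefully so that all non-fat-tree configurations are $O(N^{-1})$. A further difficulty is that the identity $\mathfrak p_\sigma=\mathfrak p_B+\mathfrak p_{B^\complement}$ fails exactly when some ordered pair is shared between the $B$ and $B^\complement$ parts. I would handle both by first passing to the graph whose edges are the ordered-pair classes, and then applying the forest bound of \cite[Theorem 2.1.4]{west2001introduction} to its undirected shadow.
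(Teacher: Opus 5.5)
Your proposal is correct and follows essentially the same route as the paper. You expand over the color vector $\mathbf{r}\in\{1,2\}^l$ and over $\sigma\in\mathcal{P}^{\#}(V_1,\dots,V_l)$, use inclusion--exclusion over $B$, count ordered-pair classes and loop vertices (so reversed pairs and loops each cost an extra $N^{-1}$), and keep only pair partitions whose components are colored fat trees, which then factorize into Wick's formula. As a side remark that neither you nor the paper draws out: each $T_{\pi_j}$ is the image of a closed walk, and a closed walk on a tree crosses every edge equally often in both directions, so no graph in $\mathcal{P}^{\#}(T_{\pi_1}^{(r_1)},T_{\pi_2}^{(r_2)})$ can actually be a colored fat tree; the argument therefore yields identically zero covariance, i.e.\ the degenerate limit $z\equiv 0$.
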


\begin{proof}
Let $\pi_1,\pi_2,\dots,\pi_l\in \bigcup_{k} \mathcal{P}(k).$
Let $T_1,T_2, \dots, T_r$ be directed graphs, each corresponding to a partition $\pi_j$, with $T_j = (V_j, E_j)$. These graphs are defined in the same way as in Section \ref{section:Central Limit Theorem for Generalized Moments for corr matrices}.
To prove convergence to a Gaussian process, it is sufficient to show that
$$
\mathbb{E}\Big[\prod_{j=1}^l Z_N(T_j)\Big]
$$
satisfies Wick’s formula. Recall from \eqref{eq: Z_N T pi}
$$
Z_N(T_j)
=
Z_N^{(1)}(T_j)+Z_N^{(2)}(T_j),
$$
where for $r_j=\{1,2\}$
$$
Z_N^{(r_j)}(T_j)
=
\sqrt{N}
\Big(
\tau_N^{(r_j)}[T_j]
-
\mathbb{E}[\tau_N^{(r_j)}[T_j]]
\Big),
$$
and for each $j$,
$$
\tau_N^{(r_j)}[T_j]
=
\frac{1}{N}
\sum_{\substack{\phi_j:V_j\to[N]\\ \mathrm{injective}}}
\prod_{(u,v)\in E_j}
m^{(r_j)}_{\phi_{j(u)}\phi_{j(v)}}.
$$
Let us define $\mathbf{r} = (r_1, r_2, \dots, r_l) \in \{1,2\}^l$.
We now consider
\begin{align}
&\mathbb{E}\Big[\prod_{j=1}^l Z_N(T_j)\Big]\\
&=
\mathbb{E}\Big[\prod_{j=1}^l
\Big(Z_N^{(1)}(T_j)+Z_N^{(2)}(T_j)\Big)\Big]
\nonumber\\
&=
\sum_{\mathbf{r}\in\{1,2\}^l}
\mathbb{E}
\Big[
\prod_{j=1}^l
Z_N^{(r_j)}(T_j)
\Big]\notag\\
&= \sum_{\mathbf{r}\in\{1,2\}^l}
(N)^{l/2}\mathbb{E}
\Bigg[
\prod_{j=1}^l
\Big(
\tau_N^{(r_j)}[T_j]
-\mathbb{E}[\tau_N^{(r_j)}[T_j]]
\Big)\Bigg]\notag\\
&= \sum_{\mathbf{r}\in\{1,2\}^l}
(N)^{l/2}\mathbb{E}
\Bigg[
\prod_{j=1}^l
\Bigg(
\frac{1}{N}\sum_{\substack{\phi_1,\phi_2,\dots,\phi_l \\ \phi_j:V_j \rightarrow [N] \text{ injective}}}
\prod_{(u,v)\in E_{j}}
m^{(r_j)}_{\phi_{j(u)}\phi_{j(v)}}\notag\\
&\;\;
-\mathbb{E}\bigg[\frac{1}{N}
\sum_{\substack{\phi_1,\phi_2,\dots,\phi_l\\ \phi_j:V_j \rightarrow [N] \text{ injective}}}
\prod_{(u,v)\in E_{j}}
m^{(r_j)}_{\phi_{j(u)}\phi_{j(v)}}\bigg]\Bigg)\Bigg]\notag\\
&=\frac{1}{(N)^{l/2}}
\sum_{\mathbf{r}\in\{1,2\}^l}
\sum_{\substack{\phi_1,\phi_2,\dots,\phi_l \\ \phi_j:V_j \rightarrow [N] \text{ injective}}}
\mathbb{E}
\Bigg[
\prod_{j=1}^l
\Bigg(
\prod_{(u,v)\in E_{j}}
m^{(r_j)}_{\phi_{j(u)}\phi_{j(v)}}\notag\\
&\;\;
-
\mathbb{E}\bigg[
\prod_{(u,v)\in E_{j}}
m^{(r_j)}_{\phi_{j(u)}\phi_{j(v)}}
\bigg]
\Bigg)
\Bigg].
\end{align}
Let $\mathcal{P}(V_1,V_2,\dots,V_l)$ be the set of partitions of disjoint union of $V_1\cup V_2\cdots\cup V_l$
such that each block contains at most one vertex from each $V_j$.
For $\sigma\in\mathcal{P}(V_1,V_2,\dots,V_l)$,
let
$$
S_\sigma =
\left\{
(\phi_1,\phi_2,\dots,\phi_l):
\phi_i(v)=\phi_j(v')
\iff v\sim_\sigma v'
\right\}.
$$
Then
\begin{align*}
    &\mathbb{E}\left[\prod_{j=1}^l Z_N(T_j)\right]\\
    &=(N)^{-l/2} \sum_{\mathbf{r}\in\{1,2\}^l}\sum_{\sigma\in\mathcal{P}(V_1,V_2,\dots,V_l)}
\sum_{(\phi_1,\phi_2,\dots,\phi_l)\in S_\sigma}
\omega_N(\phi_1,\phi_2,\dots,\phi_l;r_1,r_2,\dots,r_l),
\end{align*}
where
$$
\omega_N(\phi_1,\dots,\phi_r; \mathbf{r})
=
\mathbb{E}
\Bigg[
\prod_{j=1}^l
\bigg(
\prod_{(u,v)\in E_j}
m^{(r_j)}_{\phi_{j(u)}\phi_{j(v)}}-\mathbb{E}\bigg[\prod_{(u,v)\in E_j}
m^{(r_j)}_{\phi_{j(u)}\phi_{j(v)}}\bigg]
\bigg)
\Bigg].
$$
Since $\omega_N(\phi_1,\dots,\phi_l; r_1,r_2,\dots,r_l)$
depends only on $\sigma$ and the block labels
$(r_1,r_2,\dots,r_l)$. Hence, we can write
$$
\omega_N(\phi_1,\dots,\phi_r; \mathbf{r})=\omega_N(\sigma; \mathbf{r}).
$$
Moreover,
$$
\sum_{(\phi_1,\dots,\phi_l)\in S_\sigma} 1
=
\frac{N!}{(N-|\sigma|)!}=N^{|\sigma|}(1+O(N^{-1})).
$$

Therefore, 
\begin{align*}
\mathbb{E}\Big[\prod_{j=1}^l Z_N(T_j)\Big]
&=
\sum_{\sigma\in\mathcal P(V_1,\dots,V_l)}
\sum_{\mathbf{r}\in\{1,2\}^l}
N^{-l/2+|\sigma|}
\omega_N(\sigma; \mathbf{r})
(1+O(N^{-1}))\\
&=\sum_{\sigma\in\mathcal{P}^{\#}(V_1,\dots,V_l)}
\sum_{\mathbf{r}\in\{1,2\}^l}
N^{-l/2+|\sigma|}
\omega_N(\sigma; \mathbf{r})
(1+O(N^{-1})),
\end{align*}
where
\begin{align*}
&\mathcal{P}^{\#}(V_1,V_2,\dots,V_r)\\
&= \Big\{\sigma \in \mathcal{P}(V_1,V_2,\dots,V_r)\ \Big|\ 
\parbox{0.6\textwidth}{\setstretch{0.9}
each graph $T_i$ shares at least one edge with any of the other graphs $T_j$, $j\neq i$
}
\Big\}.
\end{align*}
We now consider 
\begin{align*}
\omega_N(\sigma; \mathbf{r})
&=
\sum_{B\subseteq\{1,2,\dots,l\}}
(-1)^{l-|B|}
\mathbb{E}
\bigg[
\prod_{j\in B}
\prod_{(u,v)\in E_j}
m^{(r_j)}_{\phi_{j(u)}\phi_{j(v)}}
\bigg]
\notag\\
&\qquad\times
\prod_{j\notin B}
\mathbb{E}
\bigg[
\prod_{(u,v)\in E_j}
m^{(r_j)}_{\phi_{j(u)}\phi_{j(v)}}
\bigg].
\end{align*}

For a fixed $B\subseteq\{1,2,\dots,r\}$, let ${T}_{B,\mathbf{r}}$ be the graph obtained by merging the vertices of the graphs $T_j$, $j\in B$, that belong to the same block of $\sigma$. We name the edges based on their weights: edges with weights from the matrix $M^{(1)}$ are referred as blue edges, and edges with weights from the matrix $M^{(2)}$ are referred as red edges. 

From now on, $\hat{T}$ denotes a graph that does not contain any pair of vertices $\{u,v\}$ connected by a directed edge of multiplicity one (in any direction), and does not contain any vertex with a single loop. The notation $\hat{\mathcal{T}}$ denotes the graph obtained from $\hat{T}$ by ignoring the multiplicities of its same colored edges. 

% \textcolor{blue}{Because of $\prod_{j\notin B}\mathbb{E}
% \bigg[
% \prod_{(u,v)\in E_j}
% m^{(r_j)}_{\phi_{j(u)}\phi_{j(v)}}
% \bigg]$ graph $\hat{T}$ must have at least two edge of same color. }

% \textcolor{orange}{Let $p_k$ denote the number of vertices of $\hat{T}_{B,\mathbf{r}}$ with $k$ loops attached, among which $p_k^{(1)}$ denotes the number of loops with weights from $M^{(1)}$ and $p_k^{(2)}$ denotes the number of loops with weights from $M^{(2)}$. Let $q_{k,l}$ denote the number of ordered vertex pairs $(u,v)$ with $u < v$ such that there are $k$ directed edges from $u$ to $v$ with weights from $M^{(1)}$ and $l$ directed edges from $u$ to $v$ with weights from $M^{(2)}$.} 
Let $p_k$ denote the number of vertices of $\hat{T}_{B,\mathbf{r}}$ that have $k$ loops attached. Among these, let $p_k^{(1)}$ denote the number of blue loops and $p_k^{(2)}$ denote the number of red loops. Let $q_{k,l}^{\rightarrow}$ denote the number of ordered vertex pairs $(u,v)$ with $u < v$ such that there are $k$ directed blue edges from $u$ to $v$ and $l$ directed red edges from $u$ to $v.$ Similarly, let $q_{k,l}^{\leftarrow}$ denote the number of ordered vertex pairs $(u,v)$ with $u < v$ such that there are $k$ directed blue edges from $v$ to $u$ and $l$ directed red edges from $v$ to $u$. By independence and the correlation assumptions on $M$, we have

\begin{align}
   &\mathbb{E}
\Bigg[
\prod_{j\in B}
\prod_{(u,v)\in E_{j}}
m^{(r_j)}_{\phi_{j(u)}\phi_{j(v)}}
\Bigg]\notag\\
&=\prod_{k\geq2}\big(\mathbb{E}[|m^{(1)}_{11}|^k]\big)^{p_k^{(1)}}\big(\mathbb{E}[|m^{(2)}_{11}|^k]\big)^{p_k^{(2)}}\displaystyle\prod_{\substack{k,l\ge0\\ k+l\ge2}}\big(\mathbb{E}[(m^{(1)}_{12})^{k}(m^{(2)}_{12})^{l}]\big)^{q_{k,l}^{\rightarrow}}\big(\mathbb{E}[(m^{(1)}_{21})^{k}(m^{(2)}_{21})^{l}]\big)^{q_{k,l}^{\leftarrow}}\notag\\
&= N^{-\mathfrak{p}_{B}}
\prod_{k\geq2}\bigg(\frac{\mathbb{E}[|x^{(1)}_{11}|^k]}{N^{k/2-1}}\bigg)^{p_k^{(1)}}\bigg(\frac{\mathbb{E}[|x^{(2)}_{11}|^k]}{N^{k/2-1}}\bigg)^{p_k^{(2)}}\displaystyle\prod_{\substack{k,l\ge0\\ k+l\ge2}}\bigg(\frac{\mathbb{E}[(m^{(1)}_{12})^{k}(m^{(2)}_{12})^{l}]}{N^{(k+l)/2-1}}\bigg)^{q_{k,l}^{\rightarrow}}\bigg(\frac{\mathbb{E}[(m^{(1)}_{21})^{k}(m^{(2)}_{21})^{l}]}{N^{(k+l)/2-1}}\bigg)^{q_{k,l}^{\leftarrow}}\notag\\
&=: N^{-\mathfrak{p}_{B}}\,\delta_N(B,\sigma),
\end{align}
where 
$$
\mathfrak{p}_{B}=\sum_{k\geq2}(p_k^{(1)}+p_k^{(2)})+\displaystyle\sum_{\substack{k,l\ge0\\ k+l\ge2}} \big(q_{k,l}^{\rightarrow}+q_{k,l}^{\leftarrow}\big).
$$ 
Here $\mathfrak{p}_{B}$ denotes the number of edges in the graph $\hat{\mathcal{T}}_{B,\mathbf{r}}$, which is obtained from $\hat{T}_{B,\mathbf{r}}$ by ignoring edge multiplicities while preserving directions. That means, for each ordered pair of vertices, we retain at most one directed edge in each direction. In particular, if multiple edges in the same direction are present, they are reduced to a single edge, and if edges in both directions are present, then both are retained, one in each direction.

Let ${T}_{B^{\complement},\mathbf{r}}$ be the graph obtained by merging the vertices of the graphs $T_j$, $j\notin B$,
that belong to the same block of $\sigma$. Let $p_{k}'$ denote the number of vertices of $\hat{T}_{B^{\complement},\sigma}$ with $k$ loops attached, out of which $(p_{k}')^{(1)}$ are the number of blue loops and $(p_{k}')^{(2)}$ denote the number of red loops.
Let $q_{k,l}'^{\rightarrow}$ denotes the number of ordered vertex pair $(u,v)$, with $u<v$
such that there are $k$ directed blue edges from $u$ to $v$ and $l$ directed red edges from $u$ to $v$. Similarly, let $q_{k,l}'^{\leftarrow}$ denotes the number of ordered vertex pair $(u,v)$, with $u<v$
such that there are $k$ directed blue edges from $v$ to $u$ and $l$ directed red edges from $v$ to $u$. By independence and the correlation assumptions on $M$, we have 
\begin{align*}
    &\prod_{j\notin B}
\mathbb{E}
\bigg[
\prod_{(u,v)\in E_j}
m^{(r_j)}_{\phi_{j(u)}\phi_{j(v)}}
\bigg]\\
&=\prod_{j\notin B} \Bigg\{  \prod_{k\geq2}\big(\mathbb{E}[(m^{(1)}_{11})^k]\big)^{(p_{k}')^{(1)}}\big(\mathbb{E}[(m^{(2)}_{11})^k]\big)^{(p_{k}')^{(2)}}\displaystyle\prod_{k,l\ge1}\Big(\mathbb{E}[(m^{(1)}_{12})^{k}]\mathbb{E}[(m^{(2)}_{12})^{l}]\Big)^{q_{k,l}'^{\rightarrow}}\\
&\;\;\;\; \times \Big(\mathbb{E}[(m^{(1)}_{21})^{k}]\mathbb{E}[(m^{(2)}_{21})^{l}]\Big)^{q_{k,l}'^{\leftarrow}}\Bigg\}\\
&=N^{-\mathfrak{p}_{B^\complement}}\prod_{j\notin B} \Bigg\{ \prod_{k\geq2}\bigg(\frac{\mathbb{E}[(x^{(1)}_{11})^k]}{N^{k/2-1}}\bigg)^{(p_k')^{(1)}}\bigg(\frac{\mathbb{E}[(x^{(2)}_{11})^k]}{N^{k/2-1}}\bigg)^{(p_k')^{(2)}}\notag\\
& \;\;\; \times\displaystyle\prod_{k,l\ge1}\bigg(\frac{\mathbb{E}[(m^{(1)}_{12})^{k}]\mathbb{E}[(m^{(2)}_{12})^{l}]}{N^{(k+l)/2-1}}\bigg)^{q_{k,l}'^{\rightarrow}}\bigg(\frac{\mathbb{E}[(m^{(1)}_{21})^{k}]\mathbb{E}[(m^{(2)}_{21})^{l}]}{N^{(k+l)/2-1}}\bigg)^{q_{k,l}'^{\leftarrow}} \Bigg\}\\
&=: N^{-\mathfrak{p}_{B^\complement}} \delta_{N}(B^\complement,\mathbf{r}).
\end{align*}
Here $\mathfrak{p}_{B^\complement}$ is the number of edges in the graph $\hat{\mathcal{T}}_{B^{\complement},\mathbf{r}}$.

Let $T_{\sigma,\mathbf{r}}$ be the directed graph obtained by merging all the vertices of
$T_1,T_2,\dots,T_l$ belonging to the same block of $\sigma$.
Let $c_\sigma$ be the number of connected components of $\hat{T}_{\sigma,\mathbf{r}}$ and
$\mathfrak{p}_{\sigma}$ the number of edges of $\hat{\mathcal{T}}_{\sigma,\mathbf{r}}$. Then

\begin{align}\label{eq: final exp product}
    &\mathbb{E}\left[\prod_{j=1}^l Z_N(T_j)\right]\\
    =&\sum_{\mathbf{r}\in\{1,2\}^l} \sum_{\sigma\in\mathcal{P}^\#(V_1,\dots,V_l)}
\sum_{B\subseteq\{1,2,\dots,l\}}  N^{-l/2+|\sigma|-\mathfrak{p}_{B}-\mathfrak{p}_{B^{\complement}}}(-1)^{l-|B|}
\delta_N(B,\mathbf{r})\delta_N(B^{\complement},\mathbf{r})\notag\\
& \;\;\; \times [1+O(N^{-1})]\notag\\
&=\sum_{\mathbf{r}\in\{1,2\}^l}\sum_{\sigma\in\mathcal{P}^\#(V_1,V_2,\dots,V_r)}N^{c_\sigma - r/2}N^{|\sigma|-c_\sigma-\mathfrak{p}_{\sigma}}
\sum_{B\subseteq\{1,2,\dots,r\}}
N^{\mathfrak{p}_{\sigma}-\mathfrak{p}_{B}-\mathfrak{p}_{B^\complement}}
(-1)^{r-|B|}\notag\\
&\;\;\; \times\delta_N(B,\mathbf{r})\delta_N(B^\complement,\mathbf{r})[1+O(N^{-1})].\notag
% &=\sum_{\mathbf{r}\in\{1,2\}^l}\sum_{\pi\in\mathcal{P}_2(r)}
% \prod_{\{i,j\}\in\pi}
% \sum_{T\in\mathcal{P}^{\#}(T_i,T_j)} \mathbb{E}[Z_N(T_i)Z_N(T_j)]
% +  o(1)\notag\\
% &=\sum_{\mathbf{r}\in\{1,2\}^l}\sum_{\pi\in\mathcal{P}_2(r)}
% \prod_{\{i,j\}\in\pi}
% \sum_{T\in\mathcal{P}^{\#}(T_i,T_j)} \mathbb{E}\big[ (Z_{N}^{(r_i)}(T_i)+ Z_{N}^{(r_i)}(T_i))(Z_{N}^{(r_j)}(T_j)+ Z_{N}^{(r_j)}(T_j)) \big]
% +  o(1).\notag
\end{align}
%%%%%%%%%%%%%%%%%%%%%%%%%%%%%%%%%%%%%%%%%%
Here the quantities $\delta_N(B,\mathbf{r})$ and $\delta_N(B^\complement,\mathbf{r})$ are bounded. Thus, the asymptotic behavior of \eqref{eq: final exp product}
depends on the exponent of $N$ that is on 
$$
(c_\sigma - l/2)+(\mathfrak{p}_{\sigma}-\mathfrak{p}_{B}-\mathfrak{p}_{B^\complement})+
(|\sigma|-c_\sigma-\mathfrak{p}_{\sigma}).
$$
We first consider $\sum_{B\subseteq\{1,2,\dots,l\}}
N^{\mathfrak{p}_{\sigma}-\mathfrak{p}_{B}-\mathfrak{p}_{B^\complement}}.$ Here $\mathfrak{p}_{\sigma}-\mathfrak{p}_{B}-\mathfrak{p}_{B^\complement} \le 0$ and equality holds iff $ B = \{1,2,\dots,l\} $ or $ B^\complement = \{1,2,\dots,l\} $. Without loss of generality, we assume $B = \{1,2,\dots,l\}.$ Then,
\begin{align}
    N^{\mathfrak{p}_\sigma-\mathfrak{p}_{B}-\mathfrak{p}_{B^\complement}}
= \mathbf{1}_{\{B=\{1,2,\dots,l\}\}} + O(N^{-1}).
\end{align}

We now consider the remaining factors $N^{c_\sigma - l/2}N^{|\sigma|-c_\sigma-\mathfrak{p}_\sigma}.$ Since $c_\sigma\leq l/2$ we have
$$
N^{c_\sigma-l/2} = O(N^{-\epsilon}) \text{ if }c_\sigma < l/2,
$$
where $\epsilon>0.$
Now, from \cite[Theorem 2.1.4]{west2001introduction}, $\mathfrak{p}_\sigma+c_\sigma-|\sigma|$ is the number of cycles of the graph $\hat{\mathcal{T}}_{\sigma,\mathbf{r}}$. Note that the graph $\hat{\mathcal{T}}_{\sigma,\mathbf{r}}$ may contain pair vertices $(u, v),$ where both the directed edges $u\to v$ and $v\to u$ are present. Such cases will be counted as a cycle in the factor $\mathfrak{p}_\sigma+c_\sigma-|\sigma|$. Since the number of cycles is always a non-negative number, we have $\mathfrak{p}_\sigma+c_\sigma-|\sigma|\geq0.$ Equality holds iff $\hat{\mathcal{T}}_{\sigma,\mathbf{r}}$ is a forest i.e., each component is a tree. These trees are free from the vertex pairs $(u, v)$, where both types of directed edges $u\to v$ and $v\to u$ are present. Thus
\begin{align*}
    N^{|\sigma|-c_\sigma-\mathfrak{p}_\sigma}= O(N^{-1}) \text{ if }|\sigma|-c_\sigma-\mathfrak{p}_\sigma < 0.
\end{align*}
Consequently, if either $ c_\sigma < l/2 $ or $\hat{\mathcal{T}}_{\sigma,\mathbf{r}}$ is not a forest, then 
$$
N^{c_\sigma - l/2}
N^{|\sigma| - c_\sigma - \mathfrak{p}_\sigma}
= O(N^{-1}).
$$
Thus, to obtain a non zero contribution, we need $c_\sigma=l/2$, and graph $\hat{\mathcal{T}}_{\sigma,\mathbf{r}}$ to be a forest. Hence 
\begin{align}
   & N^{|\sigma|-c_\sigma-\mathfrak{p}_\sigma}
= \mathbf{1}_{\{ \hat{\mathcal{T}}_\sigma\text{ is a forest}\}} + O(N^{-1}),
\end{align}
and
\begin{align}\label{eq: Connected components}
    N^{c_\sigma - l/2} =
\begin{cases}
1, & \text{if } c_\sigma=l/2\\[4pt]
O(N^{-1}), & \text{otherwise}.
\end{cases}
\end{align}
Each partition $\sigma\in \mathcal{P}^\#(V_1,V_2,\dots,V_l)$ induces another partition $\bar\sigma$ of $\{1,2,\dots,l\}$, defined by
$$
i \sim_{\bar\sigma} j
\quad \text{if and only if} \quad
T_i \text{ and } T_j \text{ belong to the same connected component of } \hat{T}_\sigma.
$$
Then we can write \eqref{eq: Connected components} as follows
\begin{align}
    N^{c_\sigma - l/2} =
\begin{cases}
1, & \text{if } \bar\sigma \in \mathcal{P}_2(l),\\[4pt]
O(N^{-1}), & \text{otherwise},
\end{cases}
\end{align}
where $\mathcal{P}_2(l)$ denotes the set of pair partition of $\{1,2,\dots,l\}.$

Therefore from \eqref{eq: final exp product} we have
\begin{align}
    &\mathbb{E}\left[\prod_{j=1}^l Z_N(T_j)\right]\\
    &=\sum_{\mathbf{r}\in\{1,2\}^l}
\sum_{\pi\in\mathcal{P}_2(l)}
\sum_{\substack{\sigma\in\mathcal{P}^{\#}(V_1,\dots,V_l)\\ \bar\sigma=\pi}}
\mathbf{1}_{\{\hat{\mathcal{T}}_{\sigma,\mathbf{r}} \text{ is a forest of } l/2 \text{ trees}\}}
\,\delta_N(\sigma,\mathbf{r})
+ O(N^{-1})\notag\\
%     &=
% \sum_{\pi\in\mathcal{P}_2(l)}
% \prod_{\{i,j\}\in\pi}
% \left(\sum_{r_i,r_j\in\{1,2\}}
% \sum_{\sigma\in\mathcal{P}^{\#}(V_i,V_j)}
% \mathbf{1}_{\{\hat{\mathcal{T}}_\sigma \text{ is a tree}\}}
% \,\delta_N(\sigma,r_i,r_j)
% \right)
% + o(1)\notag\\
&=
\sum_{\pi\in\mathcal{P}_2(l)}
\prod_{\{i,j\}\in\pi}
\left(\sum_{r_i,r_j\in\{1,2\}}
\sum_{\sigma\in\mathcal{P}^{\#}(V_i,V_j)}
\mathbf{1}_{\{\hat{\mathcal{T}}_{\sigma,r_i,r_j} \text{ is a tree}\}}
\,\delta_N(\sigma,r_i,r_j)
\right)
+  O(N^{-1})\notag\\
&=\sum_{\pi\in\mathcal{P}_2(l)}
\prod_{\{i,j\}\in\pi}
\sum_{T\in\mathcal{P}^{\#}(T_i,T_j)} \mathbb{E}[Z_N(T_i)Z_N(T_j)]
+  o(1).\notag
\end{align}

We now evaluate the covariance kernel. Let $l=2$
\begin{align*}
   & \mathbb{E}[Z_N(T_1)Z_N(T_2)]\notag\\
&=
\sum_{r_1,r_2 \in \{1,2\}}\sum_{\sigma\in\mathcal{P}^{\#}(V_1,V_2)}
\mathbf{1}_{\{\hat{\mathcal{T}}_\sigma \text{ is a tree}\}}
\,\delta(\sigma,r_1,r_2)
+ o(1)\notag\\
&=\sum_{r_1,r_2 \in \{1,2\}}\sum_{\hat{\mathcal{T}}_\sigma \in\mathcal{P}^{\#}(T_1^{(r_1)},T_2^{(r_2)})} \tau_0[\hat{\mathcal{T}}_\sigma ]+ o(1),
\end{align*}
where we assume that in $T_i^{(r_i)}$, the superscript $r_i$ indicates that the weights are taken from the matrix $M^{(r_i)}$, and 
\begin{align}\label{eq:covariance kerne for block matrix model}
    \delta(\sigma,\mathbf{r})&= \lim_{N\to\infty}\delta_N(\sigma,\mathbf{r})\notag\\
     &:=
    \begin{cases}
   \displaystyle\prod_{\substack{k,l\ge0\\ k+l\ge2}} C_{k,l}^{\,q_{k,l}}, &\text{if ${T}_{\sigma,\mathbf{r}}$ is a colored fat tree},\\[0.8em]
    0, & \text{otherwise.}
   \end{cases}
\end{align}
Here $q_{k,l}$ denotes the number of vertex pairs $\{u, v\}$ such that there are $k$ directed blue and $l$ directed red edges in the same direction; no edges in the reverse direction.

%%%%%%%%%%%%%%%%%%%%%%%%%%%%%%%%%%%%%%%%%%%
\end{proof}

\section[Structured ensembles]{Structured ensembles: Centrosymmetric and Circulant matrices}
In this section, we extend the analysis of correlated non-Hermitian random matrices with exploding moments to several structured random matrix ensembles. Structured matrices arise naturally in applications, including signal processing, time series analysis, wireless communications, and statistical physics. Unlike fully independent ensembles, these matrices exhibit deterministic algebraic symmetries that introduce additional dependencies among entries. It is therefore natural to ask whether the results established in the previous sections, particularly the central limit theorem for linear eigenvalue statistic, continue to hold under such structural constraints.

\subsection{Centrosymmetric Matrices} \cite{jana2024spectrum}
We consider a sequence of random centrosymmetric matrices
$$
M = \frac{1}{\sqrt{N}} X, 
\qquad 
X = [x_{ij}]_{1 \le i,j \le N}
$$
with real valued entries that satisfy the Condition \ref{Condtion: Centro_exploding}.
\begin{matrixcondition}[Matrix Assumptions]\label{Condtion: Centro_exploding}
Let $M = \frac{1}{\sqrt{N}} X,$ where $X= [x_{ij}]_{1 \le i,j \le N}$ satisfies the following conditions.
    \begin{enumerate}
        \item $\mathbb{E}[x_{ij}] = 0,\qquad \mathbb{E}[x_{ij}^2] = 1$ for all $1 \le i,j \le N.$   
        \item For all $1 \le i,j \le N$, \quad $x_{ij}= x_{N+1-i,N+1-j}$.
        \item  For every $(i,j)$,
$$
x_{ij} \text{ is independent of } 
\{ x_{kl} : (k,l) \notin \{(i,j), (N+1-i,\,N+1-j)\} \}.
$$
        \item The entries of $X$ have exploding moments in the sense that for all $k \ge 2$,
$$
\frac{\mathbb{E}[x_{ij}^k]}{N^{k/2 - \alpha}}
\longrightarrow C_k,
$$
 where $\alpha> 0$ and $(C_k)_{k \ge 2}$ are finite constants.
    \end{enumerate}
\end{matrixcondition}

% \textcolor{red}{In Chapter 2, we showed that for finite moment assumptions, the empirical spectral distribution converges to the circular law, and proved the central limit theorem for linear eigenvalue statistic. We now investigate the fluctuation behavior in the exploding moment regime described in Condition \ref{Condtion: Centro_exploding}.}

We now invoke the following theorem, which reduces the centrosymmetric matrix to a block-diagonal form.

\begin{Thm}\cite[Theorem 9]{weaver1985centrosymmetric}\label{thm:orthogonal_chapter5}
 \begin{enumerate}[label=(\alph*)]
 % Even sized case
     \item  If $M=\left(\begin{array}{ll}A & B \\ C & D\end{array}\right)$ is an $N \times N$ centrosymmetric matrix with $N=2s$ and $A, B, C$, and $D \text{ are } s\times$ s matrices, then $M$ is orthogonally similar to
     \begin{align*}
         Q^T M Q=\left(\begin{array}{cc}
    A+J C & 0 \\
    0 & A-J C
\end{array}\right),
     \end{align*}
     where 
     \begin{align*}
         Q  =  \sqrt{\frac{1}{2}}\left(\begin{array}{ll}\mathbb{I} & -J \\ J & \mathbb{I}\end{array}\right).
     \end{align*}

% Odd sized case
     \item If $M=\left(\begin{array}{lll}A & x & B \\ y & q & yJ \\ C & Jx & D\end{array}\right)$ is an $N \times N$ centrosymmetric matrix with $N=2s+1$, and $A, B, C, D$  are  $s \times s$ matrices, $x$ is $s \times 1$ matrix, $y$ is $1 \times s$ matrix, and $q$ is a scalar, then $M$ is orthogonally similar to
     \begin{align*}
         Q^{T} M Q=\left(\begin{array}{ccc}
			A+J C & \sqrt{2} x & 0 \\
			\sqrt{2} y & q & 0 \\
			0 & 0 & A-J C
		\end{array}\right),
     \end{align*}
		where
  \begin{align*}
      Q  =  \sqrt{\frac{1}{2}}\left(\begin{array}{lll} \mathbb{I} & 0 & -J \\ 0 & \sqrt{2} & 0\\ J & 0 & \mathbb{I}\end{array}\right).
  \end{align*}
 \end{enumerate}
 Here $\mathbb{I}$ is the identity matrix and $J$ is the counter-identity matrix. In both cases $Q^TQ=\mathbb{I}$.
\end{Thm}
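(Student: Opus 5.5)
The statement is a purely linear-algebraic fact (Weaver's theorem), so I would verify it by direct block multiplication, using only the identities $J^T=J$, $J^2=\mathbb{I}$, and the block form of centrosymmetry.

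\emph{Step 1: centrosymmetry in block form.} For $N=2s$, write the exchange matrix as $J_N=\left(\begin{smallmatrix}0&J\\ J&0\end{smallmatrix}\right)$ with $J=J_s$. The condition $x_{ij}=x_{N+1-i,N+1-j}$ is exactly $J_NMJ_N=M$. Expanding this blockwise gives $D=JAJ$ and $B=JCJ$. For $N=2s+1$ the same computation, with the middle row and column tracked separately, gives the displayed form with middle row $(y,q,yJ)$ and middle column $(x,q,Jx)^T$, together with $D=JAJ$ and $B=JCJ$.

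\emph{Step 2: orthogonality of $Q$.} In the even case,
\begin{align*}
Q^TQ=\tfrac12\begin{pmatrix}\mathbb{I}&J\\ -J&\mathbb{I}\end{pmatrix}\begin{pmatrix}\mathbb{I}&-J\\ J&\mathbb{I}\end{pmatrix}=\tfrac12\begin{pmatrix}\mathbb{I}+J^2& -J+J\\ -J+J& J^2+\mathbb{I}\end{pmatrix}=\mathbb{I}.
\end{align*}
The odd case is identical, with the extra middle entry $\tfrac12\cdot\sqrt2\cdot\sqrt2=1$.

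\emph{Step 3: conjugation, even case.} First compute
\begin{align*}
MQ=\tfrac{1}{\sqrt2}\begin{pmatrix}A+BJ & -AJ+B\\ C+DJ & -CJ+D\end{pmatrix}.
\end{align*}
Substituting $D=JAJ$ and $B=JCJ$ gives
\begin{align*}
MQ=\tfrac{1}{\sqrt2}\begin{pmatrix}A+JC & -(A-JC)J\\ J(A+JC) & J(A-JC)\end{pmatrix}.
\end{align*}
Left-multiplying by $Q^T=\tfrac1{\sqrt2}\left(\begin{smallmatrix}\mathbb{I}&J\\ -J&\mathbb{I}\end{smallmatrix}\right)$ gives the four blocks:
\begin{itemize}
\item top-left: $\tfrac12\big[(A+JC)+J^2(A+JC)\big]=A+JC$;
\item top-right: $\tfrac12\big[-(A-JC)J+J^2(A-JC)\big]$;
\item bottom-left: $\tfrac12\big[-J(A+JC)+J(A+JC)\big]=0$;
\item bottom-right: $\tfrac12\big[J(A-JC)J+J(A-JC)\big]$.
\end{itemize}

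\emph{Step 4: the delicate bookkeeping.} The top-right and bottom-right blocks are where I expect the real obstacle. They do not simplify correctly as written, which shows that the sign and ordering conventions in the displayed $Q$ must match the chosen block conventions precisely. The fix I would try is to take the second block column of $Q$ as $(-J,\mathbb{I})^T$ and to compute $MQ$ with $D=JAJ$ carefully, rather than substituting loosely. One then gets $-AJ+JCJJ=-AJ+JC$ and $-CJ+JAJ=J(A-JC)J\cdot J\ldots$, and these need checking term by term. If the displayed convention still fails, the remedy is the harmless replacement of $J$ by $-J$ in one block column, which preserves orthogonality. The goal is to land on the lower-right block $A-JC$ and an upper-right block of $0$.

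\emph{Step 5: odd case.} This repeats the computation with the $3\times3$ block structure. The new content is the middle row and column. From $Q^TMQ$, the middle column of $MQ$ is $(\sqrt2\,x/\sqrt2\cdot\sqrt2,\dots)$, which produces the entries $\sqrt2x$ and $\sqrt2y$ in the first block row and column. The relations $Jx$ and $yJ$ make the corresponding entries against the third block cancel. Thus the only genuine work is the same sign bookkeeping as in Step 4, plus tracking the $\sqrt2$ factors.
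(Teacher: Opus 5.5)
Your Steps 1 and 2 are correct, but the proof is never completed. Step 4 ends with ``these need checking term by term'' and a speculative fix, and the non-vanishing off-diagonal block you found comes from an arithmetic slip. The $(2,2)$ block of $\sqrt2\,MQ$ is $-CJ+D=-CJ+JAJ=J(A-JC)J$, not $J(A-JC)$. Similarly, $-AJ+B=-AJ+JCJ$, not $-AJ+JCJJ$. With the correct entry, the top-right block of $Q^TMQ$ is $\tfrac12\big[-(A-JC)J+J\cdot J(A-JC)J\big]=0$, so the displayed $Q$ does block-diagonalize $M$. However, the bottom-right block is $\tfrac12\big[J(A-JC)J+J(A-JC)J\big]=J(A-JC)J$. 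The odd case behaves the same way: the middle row and column produce $\sqrt2\,y$, $\sqrt2\,x$, $q$ and zeros, as you expected, while the $(3,3)$ block is again $J(A-JC)J$.

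Consequently the target you set in Step 4 cannot be reached. With the displayed $Q$ the corner block is $J(A-JC)J$, which in general is not $A-JC$. For example, with $s=2$, $C=0$, $A=\left(\begin{smallmatrix}1&2\\3&4\end{smallmatrix}\right)$, one gets $Q^TMQ=\mathrm{diag}(A,JAJ)$ with $JAJ\neq A$. So the identity as displayed is only correct up to this permutation similarity.

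Your remedy does not help either. Replacing $J$ by $-J$ in one block column makes the cross Gram block of the two block columns equal to $\pm J\neq 0$, which destroys orthogonality. Flipping the sign of an entire block column never changes the diagonal blocks.

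The correct repair is to right-multiply by $\mathrm{diag}(\mathbb{I},-J)$, i.e.\ to use $\widetilde Q=Q\,\mathrm{diag}(\mathbb{I},-J)=\tfrac{1}{\sqrt2}\left(\begin{smallmatrix}\mathbb{I}&\mathbb{I}\\ J&-J\end{smallmatrix}\right)$, and $Q\,\mathrm{diag}(\mathbb{I},1,-J)$ in the odd case. This is still orthogonal and gives exactly $\mathrm{diag}(A+JC,A-JC)$, or the bordered form with $A-JC$ in the corner. Alternatively, keep $Q$ and note that $J(A-JC)J=J^{-1}(A-JC)J$ has the same spectrum as $A-JC$; that is all the paper uses downstream. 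The paper itself gives no proof beyond the citation to Weaver, so your direct block computation, done correctly and with this correction, is the right argument.
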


For simplicity, we restrict to the even dimensional case $N=2N$ (the odd case can be handled similarly). Since orthogonal similarity preserves eigenvalues, the spectrum of $M$ coincides with the union of the spectra of the diagonal blocks. Thus we write
\begin{align*}
    Q^{T}MQ = \left(\begin{array}{cc}
        M^{(1)} & 0 \\
        0 & M^{(2)}
    \end{array}\right),
\end{align*}
The matrices $M^{(1)}$ and $M^{(2)}$ are non-Hermitian random matrices of order $ N$. Although their entries are uncorrelated at the variance scale, the two blocks are not independent due to the underlying centrosymmetric constraint. Since similarity transformations preserve eigenvalues, the spectrum of $M$ is the union of the spectra of the two blocks. The exploding moment Condition \ref{Condtion: Centro_exploding} on the entries of $M$ induces the following asymptotic structure on the blocks.

\begin{matrixcondition}\label{condtion: Block centrosymmtric matrix exploding moment}
    For $t=1,2$,
$$
M^{(t)}=\frac{1}{\sqrt N}[x_{ij}^{(t)}]_{1\le i,j\le N }.
$$
\begin{enumerate}

\item For each $t=1,2$ and for all $k\ge 2$,
$$
\frac{\mathbb{E}\left[(x_{ij}^{(t)})^k\right]}
{N^{k/2-\alpha}}
\longrightarrow \widetilde C_k^{(t)},
$$
as $N \rightarrow \infty,$ where $\alpha\ge0$, and 
$(\widetilde C_k^{(t)})_{k\ge2}$ are finite constants depending only on $C_k^{(t)},$ and are explicitly given by
$$
\widetilde C_k^{(1)}
=
\sum_{r=0}^k
\binom{k}{r}
C_r C_{k-r},
$$
and 
$$
\widetilde C_k^{(2)}
= (-1)^{k-r}
\sum_{r=0}^k
\binom{k}{r}
C_r C_{k-r},
$$
where $C_k$ is as in 
Condition \ref{Condtion: Centro_exploding}.
\item For all integers $k,l\ge0$ with $k+l\ge2$,
$$
\frac{
\mathbb{E}\left[(x_{ij}^{(1)})^k (x_{ij}^{(2)})^l\right]
}
{N^{(k+l)/2-\alpha}}
\longrightarrow \widetilde C_{k,l},
$$
as $N \rightarrow \infty,$ where the constants $\widetilde {C}_{k,l}$ are finite and are explicitly given by
$$
\widetilde{C}_{k,l}
=
\sum_{r=0}^k
\sum_{s=0}^l
\binom{k}{r}
\binom{l}{s}
(-1)^{\,l-s}
C_{r,s} C_{k-r,l-s},
$$
where $C_{k,l}$ are as in 
Condition \ref{Condtion: Centro_exploding}.

\item In particular, $(i,j)th$ entries of the two blocks $M^{(1)}$ and $M^{(2)}$ are uncorrelated at the variance scale, so
$$
\widetilde{C}_{1,1}=0.
$$

\item For distinct index pairs $(i,j)\neq(k,l)$,
$$
\big(x_{ij}^{(1)},x_{ij}^{(2)}\big)
\text{ is independent of }
\big(x_{kl}^{(1)},x_{kl}^{(2)}\big).
$$
\end{enumerate}
\end{matrixcondition}

\begin{cor}[CLT for Centrosymmetric Matrices with Exploding moments]
    Let $M = \frac{1}{\sqrt{N}} X$ be a $2N\times 2N$ random centrosymmetric matrix satisfying 
Conditions \ref{Condtion: Centro_exploding}. For any partition $\pi\in\bigcup_{k}\mathcal{P}(k)$, let $Z_N(T_\pi)$ be defined as in \eqref{eq: Z_N T pi}.
Then, as $N \to \infty$, the collection $\big(Z_N(T_\pi)\big)_{\pi\in\bigcup_{k}\mathcal{P}(k)}$ converges in
distribution to a centered Gaussian process $\big(z(T_\pi)\big)_{\pi\in\bigcup_{k}\mathcal{P}(k)}$. Moreover, for any two partitions $\pi_1,\pi_2$, the covariance of the limiting Gaussian process is given by
\begin{align*}
\mathrm{Cov}\left(z(T_{\pi_1}),z(T_{\pi_2})\right)
&=\sum_{r_1,r_2 \in \{1,2\}}\sum_{T \in\mathcal{P}^{\#}(T_{\pi_1}^{(r_1)},T_{\pi_2}^{(r_2)})} \tau^c[T ]+ o(1),
\end{align*}
where
\begin{align*}
\tau^c[T]
=
\begin{cases}
   \displaystyle\prod_{\substack{k,l\ge0\\ k+l\ge2}} \widetilde C_{k,l}^{\,q_{k,l}}, &\text{if $T$ is a colored fat tree},\\[0.8em]
    0, & \text{otherwise.}
   \end{cases}
\end{align*}
Here $q_{k,l}$ denotes the number of vertex pairs $\{u,v\}$ such that there are $k$ directed edges from $u$ to $v$ with weights from matrix $M^{(1)}$ (blue) and $l$ directed edges from $u$ to $v$  with weights from matrix $M^{(2)}$ (red) and there is no edge from $v$ to $u.$
\end{cor}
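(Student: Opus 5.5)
The plan is to reduce the corollary to Theorem \ref{thm:CLT for normalized traces for correlated block diagonal matrix M}, through the orthogonal block-diagonalization of Theorem \ref{thm:orthogonal_chapter5}(a).

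\textbf{Step 1: reduction to the block model.} Write $M$ in $s\times s$ blocks $A,B,C,D$. Theorem \ref{thm:orthogonal_chapter5}(a) gives $Q^TMQ=\mathrm{diag}(A+JC,\,A-JC)$. Orthogonal similarity preserves traces, so
\[
\Tr(M^k)=\Tr\big((M^{(1)})^k\big)+\Tr\big((M^{(2)})^k\big),\qquad M^{(1)}=A+JC,\quad M^{(2)}=A-JC .
\]
Hence the variables $Z_N(k)$ of $M$ coincide exactly with those of the block diagonal matrix \eqref{eqn: main definition of block matrix M}. Expanding each block trace over partitions then gives the same decomposition into $Z_N^{(1)}(T_\pi)+Z_N^{(2)}(T_\pi)$ as in \eqref{eq: Z_N T pi}. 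Up to the harmless normalization $\sqrt{2s}$ versus $\sqrt{s}$, which only rescales the constants by a fixed power of $2$, it is enough to check that $(M^{(1)},M^{(2)})$ satisfies the hypotheses used in the proof of that theorem.

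\textbf{Step 2: the entries of the blocks.} The entries are
\[
x^{(1)}_{ij}=x_{ij}+x_{N+1-i,\,j}, \qquad x^{(2)}_{ij}=x_{ij}-x_{N+1-i,\,j},
\]
appropriately normalized. By items (2)--(3) of Condition \ref{Condtion: Centro_exploding}, the two summands $x_{ij}$ (top-left block) and $x_{N+1-i,j}$ (bottom-left block) are independent. Each such pair is a function of a pair of independent entries of $X$, and distinct $(i,j)$ use disjoint, independent entries. This gives item (4) of Condition \ref{condtion: Block centrosymmtric matrix exploding moment}.

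Mixed moments follow from the binomial expansion and independence:
\[
\mathbb{E}\big[(x^{(1)}_{ij})^k(x^{(2)}_{ij})^l\big]=\sum \binom{k}{r}\binom{l}{s}(-1)^{l-s}\,\mathbb{E}\big[x^{r+s}\big]\,\mathbb{E}\big[y^{k-r+l-s}\big],
\]
where $x=x_{ij}$ and $y=x_{N+1-i,j}$. Dividing by $N^{(k+l)/2-\alpha}$, the only terms with nonvanishing scale are those where one factor carries all but a bounded moment. This yields the constants $\widetilde C_{k,l}$, and $\widetilde C_{1,1}=\mathbb{E}[x^2]-\mathbb{E}[y^2]=0$.

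\textbf{Step 3: the main obstacle.} The delicate point is the power counting. A product of two exploding moments scales as $N^{(k+l)/2-2\alpha}$, which is smaller than $N^{(k+l)/2-\alpha}$. Therefore the leading constant is really $C_{k+l}$ times a sign-combinatorial factor, plus lower-order cross terms. I would verify that $N^{-(k+l)/2+\alpha}\mathbb{E}[(x^{(1)})^k(x^{(2)})^l]$ converges to a finite limit $\widetilde C_{k,l}$ under the $\alpha=1$ regime. In any case the limit is bounded, which is all the block CLT proof uses: it only needs $\delta_N$ to be bounded and convergent.

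The diagonal entries of the blocks are again sums of two independent entries, so their absolute moments obey the required $O(N^{k/2-\alpha})$ bound. Since $\widetilde C_{1,1}=0$, a pair $u\to v$ carrying one blue and one red edge has vanishing expectation. This is consistent with the colored fat tree criterion.

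\textbf{Step 4: conclusion.} With these hypotheses verified, the Wick-formula argument of Theorem \ref{thm:CLT for normalized traces for correlated block diagonal matrix M} goes through verbatim with $C_{k,l}$ replaced by $\widetilde C_{k,l}$. The vertex counting $|\sigma|-c_\sigma-\mathfrak p_\sigma$, the reduction to pair partitions, and the forest condition are all unchanged. This gives Gaussian convergence with covariance $\sum_{r_1,r_2}\sum_T\tau^c[T]$.
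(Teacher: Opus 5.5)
Your proposal is correct and follows the paper's route: block-diagonalize $M$ via Theorem \ref{thm:orthogonal_chapter5}(a), then apply Theorem \ref{thm:CLT for normalized traces for correlated block diagonal matrix M} (at $\alpha=1$) to the pair $(A+JC,\,A-JC)$. The paper simply assumes the blocks satisfy Condition \ref{condtion: Block centrosymmtric matrix exploding moment}, whereas you check the independence structure and the mixed-moment limits, and your power count in Step 3 is right: only the two terms where $x$ or $y$ carries the whole moment survive, so $\widetilde C_{k,l}$, read as the limit, is $(1+(-1)^l)C_{k+l}$ up to a power of $2$ from the normalization (in particular $\widetilde C_{1,1}=0$), which is more accurate than the product formula displayed in the paper's condition (that formula keeps cross terms which actually vanish).
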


\begin{proof}
 We have assumed the centrosymmetric matrices with exploding moments, as described in the Condition \ref{Condtion: Centro_exploding}. Then, by using the Weavers Theorem \ref{thm:orthogonal_chapter5}, centrosymmetric matrices can be reduced to a block diagonal form consisting of two correlated blocks. And we assume that these two correlated blocks satisfy Condition \ref{condtion: Block centrosymmtric matrix exploding moment}. Notice that this Condition \ref{condtion: Block centrosymmtric matrix exploding moment} is equivalent to a correlated block diagonal model Condition \ref{cond: Correlated Block matrix} considered in the Section \ref{sec:CLT for LES of block diagonal matrices with Exploding moments}. Therefore, the central limit theorem for the generalized moments follows
directly.

% $$
% \mathrm{Cov}(Z(T_i),Z(T_j))
% =
% \frac12
% \sum_{r,s=1}^{2}
% \sum_{\sigma \in \mathcal P^\#(V_i^{(r)},V_j^{(s)})}
% \mathbf 1_{\{T_\sigma \text{ is a directed tree}\}}
% \,\delta(\sigma;r,s),
% $$
% where
% \begin{align}
% \delta(\sigma)
% =
% \prod_{k\ge1}
% \big(\widetilde C_k^{(1)}\big)^{p_k} \big(\widetilde C_k^{(2)}\big)^{p_k} 
% \prod_{\substack{k,l\ge0\\ k+l\ge2}}
% \big(D_{k,l}\big)^{q_{k,l}}.
% \end{align}
\end{proof}

\subsection{Circulant Matrices}

In this section, we establish the central limit theorem for linear eigenvalue 
statistic of random circulant matrices in the exploding moment regime.

Let
$$
C=[c_{ij}]_{0\le i,j\le N-1}
=\frac{1}{\sqrt{N}}\operatorname{Circ}(x_0,x_1,\dots,x_{N-1})
$$
be an $N\times N$ random circulant matrix generated by independent
real valued random variables $\{x_j\}_{j=0}^{N-1}$. The entries of $C$ satisfy
$$
c_{ij}=\frac{1}{\sqrt N}\,x_{(i-j)\bmod N}, 
\qquad 0\le i,j\le N-1 .
$$

Equivalently, the matrix can be written in the form
$$
C=\frac{1}{\sqrt N}
\begin{bmatrix}
x_0 & x_1 & x_2 & \dots & x_{N-1} \\
x_{N-1} & x_0 & x_1 & \dots & x_{N-2} \\
x_{N-2} & x_{N-1} & x_0 & \dots & x_{N-3} \\
\vdots & \vdots & \vdots & \ddots & \vdots \\
x_1 & x_2 & x_3 & \dots & x_0
\end{bmatrix}.
$$

Each row of $C$ is obtained from the previous row by a cyclic
right shift.

\begin{matrixcondition}[Circulant Ensemble with Exploding Moments]\label{condition:Circulant Ensemble with Exploding Moments}
Let
$$
C=\frac{1}{\sqrt{N}}\operatorname{Circ}(x_0,x_1,\dots,x_{N-1})
$$
be an $N\times N$ random circulant matrix generated by independent real valued random variables $\{x_i\}_{i=0}^{N-1}$. We assume the following.

\begin{enumerate}
    \item $\mathbb{E}[x_i]=0$, \quad $\mathbb{E}[x_i^2]=1$.
    \item $\{x_i\}_{0 \leq i \leq N-1}$ are independent.
    \item For all $k\ge2$, and $0 \leq i \leq N-1$
   $$
    \frac{\mathbb{E}[x_i^k]}{N^{k/2-\alpha}}
    \longrightarrow C_k,
    $$
    where $\alpha\ge0$ and $(C_k)_{k\ge2}$ are finite constants.
\end{enumerate}
\end{matrixcondition}
For circulant matrices, spectral analysis simplifies significantly. The eigenvalues of $C$ are given by
$$
\lambda_k=\frac{1}{\sqrt N}\sum_{j=0}^{N-1}x_j\omega_N^{jk},
\qquad
\omega_N=e^{2\pi i/N},
\qquad
k=0,1,\dots,N-1.
$$
We now consider 
\begin{align}\label{eq:Tr of circualnt matrix}
    \Tr(C^k)&=\sum_{m=0}^{N-1}\lambda_m^k\notag\\
    &=\sum_{m=0}^{N-1}\bigg( \frac{1}{\sqrt N}\sum_{j=0}^{N-1}x_j\omega_N^{jm}\bigg)^k\notag\\
    &=\sum_{m=0}^{N-1}\frac{1}{N^{k/2}}\sum_{j_1,\dots,j_k=0}^{N-1}x_{j_1}\cdots x_{j_k}\omega_N^{m(j_1+\cdots+j_k)}\notag\\
    &=\frac{1}{N^{k/2}}\sum_{j_1,\dots,j_k=0}^{N-1}x_{j_1}\cdots x_{j_k}\sum_{m=0}^{N-1}\omega_N^{m(j_1+\cdots+j_k)}\notag\\
    &=\frac{1}{N^{k/2-1}}\sum_{\substack{j_1,\dots,j_k=0\\ j_1+\cdots+j_k=0 (\text{mod }N)}}^{N-1}x_{j_1}\cdots x_{j_k}
\end{align}
where in the last equality we used the following identity
$$
\sum_{m=0}^{N-1}\omega_N^{mr}
=
\begin{cases}
N, & r= 0 \;(\text{mod }N),\\
0, & \text{otherwise}.
\end{cases}
$$
% \textcolor{blue}{
% \begin{proof}
% Let
% $$
% S_r=\sum_{m=0}^{N-1}\omega_N^{mr}.
% $$
% \textbf{Case 1: $r\equiv0\pmod N$.}
% If $r=kN$ for some integer $k$, then
% $$
% \omega_N^{mr}
% =
% e^{\frac{2\pi i}{N}m(kN)}
% =
% e^{2\pi i mk}
% =
% 1 .
% $$
% Therefore
% $$
% S_r=\sum_{m=0}^{N-1}1=N.
% $$
% \medskip
% \textbf{Case 2: $r\not\equiv0\pmod N$.}
% Let
% $$
% q=\omega_N^{r}.
% $$
% Then the sum becomes a geometric series
% $$
% S_r=\sum_{m=0}^{N-1} q^{m}.
% $$
% Since $r\not\equiv0\pmod N$, we have $q\neq1$. Using the formula for the
% sum of a geometric series,
% $$
% S_r=\frac{1-q^{N}}{1-q}.
% $$
% Now compute
% $$
% q^N=(\omega_N^{r})^N
% =\omega_N^{rN}
% =\left(e^{\frac{2\pi i}{N}}\right)^{rN}
% =e^{2\pi i r}
% =1.
% $$
% Therefore
% $$
% S_r=\frac{1-1}{1-q}=0.
% $$
% Combining the two cases, we obtain
% $$
% \sum_{m=0}^{N-1}\omega_N^{mr}
% =
% \begin{cases}
% N, & r\equiv0\pmod N,\\
% 0, & \text{otherwise}.
% \end{cases}
% $$
% \end{proof}
% }

Thus from \eqref{eq:Tr of circualnt matrix}, we have
\begin{align}\label{eq:exp of Tr of circualnt matrix}
    \mathbb{E}[\Tr(C^k)]
    =\frac{1}{N^{k/2-1}}\sum_{\substack{j_1,\dots,j_k=0\\ j_1+\cdots+j_k=0(\text{mod }N)}}^{N-1}\mathbb{E}[x_{j_1}x_{j_2}\cdots x_{j_k}]
\end{align}
Since $\mathbb{E}[x_i]=0$ and the variables are independent, therefore
$\mathbb{E}[x_{j_1}x_{j_2}\cdots x_{j_k}]$ is non zero only when each variable
appears at least twice. Let us consider
$i_1,i_2,\dots,i_r$ be the distinct indices with multiplicities
$m_1,m_2,\dots,m_r$ such that
$
m_1+m_2+\cdots+m_r=k, \; m_i\ge2.
$
Thus, from Condition \ref{condition:Circulant Ensemble with Exploding Moments}, we have
\begin{align*}
    \mathbb{E}[x_{j_1}x_{j_2}\cdots x_{j_k}]&=\prod_{a=1}^r \mathbb{E}[x_{i_a}^{m_a}]\\
    &=N^{k/2-r\alpha}\prod_{a=1}^r \bigg(\frac{\mathbb{E}[x_{i_a}^{m_a}]}{N^{{m_a}/2-\alpha}}\bigg)\\
    &\sim N^{k/2-r\alpha}\prod_{a=1}^r C_{m_a}.
\end{align*}
We now estimate the number of index tuples satisfying the constraint
$j_1+\cdots+j_k= 0 (\text{mod }N).$ Without the constraint, the number of ways to choose $r$ distinct indices is $O(N^r)$. Now, from \eqref{eq:exp of Tr of circualnt matrix} we have 
\begin{align*}
\mathbb{E}[\Tr(C^k)]
&=\frac{1}{N^{k/2-1}}\sum_{\substack{j_1,\dots,j_k=0\\ j_1+\cdots+j_k=0(\text{mod }N)}}^{N-1}\mathbb{E}[x_{j_1}x_{j_2}\cdots x_{j_k}]\\
&=\frac{1}{N^{k/2-1}}\sum_{\substack{i_1,\dots,i_r=0\\ i_1,\dots i_r \text{ are distinct} \\ m_a\ge2,\,\sum m_a=k\\m_1i_1+\cdots+m_ri_r=0(\text{mod }N)}}^{N-1}\frac{k!}{m_1!\cdots m_r!}\mathbb{E}[x_{i_i}^{m_1}\cdots x_{i_r}^{m_r}]\\
&=\frac{1}{N^{k/2-1}}\sum_{\substack{i_1,\dots,i_r=0\\ i_1,\dots i_r \text{ are distinct} \\ m_a\ge2,\,\sum m_a=k\\m_1i_1+\cdots+m_ri_r=0(\text{mod }N)}}^{N-1}\frac{k!}{m_1!\cdots m_r!}\mathbb{E}[x_{i_i}^{m_1}]\cdots \mathbb{E}[x_{i_r}^{m_r}]\\
% &=\frac{1}{N^{k/2-1}}\sum_{\substack{(m_1,\dots,m_r)\\ m_a\ge2,\,\sum m_a=k}}N(N-1)\dots (N-r+1)N^{-1}\frac{k!}{m_1!\cdots m_r!} N^{k/2-r\alpha}
% \prod_{a=1}^r C_{m_a} \\
% &\sim\frac{1}{N^{k/2-1}}\sum_{\substack{i_1,\dots,i_r=0\\ i_1,\dots i_r \text{ are distinct} \\ m_a\ge2,\,\sum m_a=k\\m_1i_1+\cdots+m_ri_r=0(\text{mod }N)}}^{N-1}\frac{k!}{m_1!\cdots m_r!}N^{k/2-r\alpha}\prod_{a=1}^r C_{m_a} \\
&\sim \frac{1}{N^{k/2-1}}
\sum_{\substack{(m_1,\dots,m_r)\\ m_a\ge2,\,\sum m_a=k}}
\frac{k!}{m_1!\cdots m_r!}
N^{r-1} N^{k/2-r\alpha}
\prod_{a=1}^r C_{m_a} \\
&= \sum_{\substack{(m_1,\dots,m_r)\\ m_a\ge2,\,\sum m_a=k}}
\frac{k!}{m_1!\cdots m_r!}
N^{r(1-\alpha)}
\prod_{a=1}^r C_{m_a}.
\end{align*}

Therefore, 
$$
\lim_{N\to\infty}\mathbb{E}[\Tr(C^k)]
=
\begin{cases}
\infty, & \alpha<1,\\[6pt]
\displaystyle
\sum_{\substack{(m_1,\dots,m_r)\\ m_a\ge2,\,\sum_{a=1}^{r}m_a=k\\ 1\leq r \leq \left\lfloor \frac{k}{2} \right\rfloor}}
\frac{k!}{m_1!\cdots m_r!}
\prod_{a=1}^r C_{m_a},
& \alpha=1,\\[12pt]
0, & \alpha>1.
\end{cases}
$$

We now move to the CLT for the normalized traces. 
Recall from \eqref{eq:Tr of circualnt matrix} that for every integer $k\ge1$,
\begin{equation}\label{eq:circ_trace_formula_clt}
\Tr(C_N^k)
=
\frac{1}{N^{k/2-1}}
\sum_{\substack{j_1,\dots,j_k=0\\
j_1+\cdots+j_k\equiv0\ (\mathrm{mod}\ N)}}^{N-1}
x_{j_1}x_{j_2}\cdots x_{j_k}.
\end{equation}

We now define the centered and normalized traces by
\begin{equation}\label{eq:ZN_circulant}
Z_N(k):=
\frac{1}{\sqrt{N}}\Tr(C_N^k)-\mathbb{E}\left[\frac{1}{\sqrt{N}}\Tr(C_N^k)\right].
\end{equation}
We now state the CLT for the normalized traces.
\begin{Thm}[CLT for the normalized traces]\label{thm:CLT_circulant_exploding}
    Let $C_N$ be the random circulant matrix satisfying
Condition \ref{condition:Circulant Ensemble with Exploding Moments}
with $\alpha=1$. Then for $k\ge1$,
$$
Z_N(k)
=
\frac{1}{\sqrt{N}}\Tr(C_N^k)-\mathbb{E}\left[\frac{1}{\sqrt{N}}\Tr(C_N^k)\right]
$$
converges in distribution to a centered Gaussian random variable $z(k)$. Moreover, the Covariance kernel is given as follows. 
\begin{align}
\mathrm{Cov}(z(k), z(l))
=
\begin{cases}
k!  , & \text{if } k=l, \\
0, & \text{if } k \neq l. 
\end{cases} +o(1),
\end{align}
\end{Thm}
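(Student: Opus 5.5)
We prove the CLT by the method of moments: for every tuple $k_1,\dots,k_r$ we show that $\mathbb{E}[\prod_{t=1}^r Z_N(k_t)]$ obeys Wick's formula asymptotically, with the stated covariance. We start from \eqref{eq:circ_trace_formula_clt}, which gives
\begin{align*}
Z_N(k)=N^{-(k-1)/2}\sum_{\substack{j_1,\dots,j_k\\ j_1+\cdots+j_k\equiv 0}}\Big(x_{j_1}\cdots x_{j_k}-\mathbb{E}[x_{j_1}\cdots x_{j_k}]\Big).
\end{align*}
Expanding the product of $r$ such sums, we sort the index tuples by the partition $\sigma$ of the disjoint union $[k_1]\sqcup\cdots\sqcup[k_r]$ that records which indices coincide, exactly as with $\mathcal P(V_1,\dots,V_r)$ in the proof of Theorem~\ref{thm:CLT for normalized traces}. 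Centering and independence force every block of $\sigma$ to have size at least two. They also force each factor $t$ to share at least one block with some other factor; otherwise that factor is independent of the rest and its centred term has mean zero.

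Next we count powers of $N$. A block of size $m$ contributes $\mathbb{E}[x^m]\sim C_mN^{m/2-1}$ when $\alpha=1$. The total prefactor is therefore $N^{-\sum(k_t-1)/2}\,N^{\sum k_t/2-|\sigma|}=N^{r/2-|\sigma|}$. The number of distinct values $i_1,\dots,i_{|\sigma|}$ satisfying the $r$ linear constraints $\sum_{a}m_{a}^{(t)}i_a\equiv 0 \pmod N$ is at most $N^{|\sigma|-\mathrm{rank}}$, so each $\sigma$ contributes $O(N^{r/2-\mathrm{rank}(\sigma)})$. Here we would argue that the rank is at least the number $c_\sigma$ of connected components of the graph on $\{1,\dots,r\}$ induced by $\sigma$, in generic position. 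Each component has at least two factors, so $c_\sigma\le r/2$, and the leading terms require $c_\sigma=r/2$, i.e.\ $\sigma$ induces a pair partition. These terms then factor over the pairs by independence, which gives Wick's formula. The subtracted expectations in the centring only cancel the disconnected pieces, as in \eqref{eq:power of N 2nd eq}.

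\textbf{Main obstacle.} The hardest step is the rank/degeneracy count. Inside a pair $\{s,t\}$ the two constraints may collapse into one. For example, if every block of $\sigma$ restricted to the pair has the same multiplicity pattern, the two constraints are proportional and the count becomes $N^{|\sigma|-1}$ instead of $N^{|\sigma|-2}$. This changes the leading order, so we must determine exactly which $\sigma$ survive. We would first check $r=2$ directly. There, the term of order $N^{1-\mathrm{rank}}\cdot N^{|\sigma|}N^{-|\sigma|}$ survives only if the rank is $1$, i.e.\ the constraint from $Z_N(l)$ is implied by that of $Z_N(k)$. We expect the dominant surviving configurations to be the perfect matchings that pair each index of the $k$-product with one of the $l$-product ($k=l$, all blocks of size $2$, giving $k!$ matchings, each weighted by $C_2$ and the variance normalisation $1$). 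For $k\neq l$ we need every configuration to be of lower order, and that yields covariance $0$.

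Configurations with larger blocks (size $\ge 3$) that also give rank $1$ must then be shown either to have proportional constraints only on a subset of lower-dimensional solutions, or to vanish after centring. We would control this by bounding solutions of $\sum_a m_a i_a\equiv 0$ together with $\sum_a m'_a i_a\equiv 0$: whenever the vectors $(m_a)$ and $(m'_a)$ are not proportional modulo $N$, the number of solutions is $O(N^{|\sigma|-2})$. For $r>2$, we would show that any component containing three or more factors loses at least a factor $N^{-1/2}$, via the same rank argument applied componentwise.
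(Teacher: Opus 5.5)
\textbf{There is a genuine gap, and it cannot be closed, because the statement as posed is false.}

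\textbf{Where your argument fails.} Your power count is correct, and it is exactly what defeats the argument. At $\alpha=1$ each $\sigma$ is of order $N^{r/2-\mathrm{rank}(\sigma)}$, whatever the block sizes. Merging two blocks of size $2$ into one block of size $4$ costs one free index ($N^{-1}$) but gains $\mathbb{E}[x^4]\sim C_4N$. Your step ``rank $\ge c_\sigma$ and $c_\sigma\le r/2$, so only $c_\sigma=r/2$ survives'' runs in the wrong direction. A lower bound $\mathrm{rank}(\sigma)\ge c_\sigma$ only gives $O(N^{r/2-c_\sigma})$, and this blows up when $c_\sigma<r/2$. To discard a component with $s\ge 3$ factors you would need its rank to exceed $s/2$, and that is false. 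If every factor in the component uses the same multiset of indices (e.g.\ all indices $0$), all its constraints coincide and the rank is $1$. The term is then of order $N^{s/2-1}$ (times $C_{k_1+\cdots+k_s}$), so your claim that such components lose $N^{-1/2}$ fails.

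\textbf{Your $r=2$ plan fails the same way.} Rank-one configurations with large blocks neither live on a lower-dimensional solution set nor cancel after centring. The simplest is $j=(0,\dots,0)$, $j'=(0,\dots,0)$. It contributes
\[
N^{1-(k+l)/2}\big(\mathbb{E}[x_0^{k+l}]-\mathbb{E}[x_0^{k}]\,\mathbb{E}[x_0^{l}]\big)\longrightarrow C_{k+l}
\]
to $\mathrm{Cov}(Z_N(k),Z_N(l))$ for every $k,l$. For $k=l\ge 2$ this adds to the $k!$ matchings. For $k\ne l$ it need not vanish: in fact $\mathrm{Cov}(Z_N(1),Z_N(2))=N^{-1/2}\mathbb{E}[x_0^3]\to C_3$ exactly.

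\textbf{Counterexamples to the statement itself.} These apply whenever the exploding constants are nonzero.
\begin{itemize}
\item Since $\Tr(C_N)=\sqrt N\,x_0$, you have $Z_N(1)=x_0$ exactly. Hence $\mathbb{E}[Z_N(1)^4]=\mathbb{E}[x_0^4]\sim C_4N$, and Wick's formula already fails at $r=4$.
\item For odd $N$, a direct computation gives $\mathrm{Var}(Z_N(2))=2(N-1)/N+(\mathbb{E}[x_0^4]-1)/N\to 2+C_4$, not $2!$.
\item Take sparse entries $x_j=\pm\sqrt N$ with probability $1/(2N)$ each and $0$ otherwise. They satisfy the exploding-moment condition with $\alpha=1$ and $C_{2m}=1$. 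Then $\frac{1}{\sqrt N}\Tr(C_N^k)=\sqrt N\,S_k$ with $S_k$ integer valued. A $k$-tuple with $p$ distinct indices and zero sum is entirely nonzero with probability $N^{-p}$, and there are $O(N^{p-1})$ such tuples, so $\mathbb{E}|S_k|=O(1/N)$. Hence $Z_N(k)\to 0$ in probability for every $k$, not $N(0,k!)$.
\end{itemize}

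\textbf{The paper's proof has the same flaw.} It claims each extra power of a variable gains only $N^{1/2}$ while costing a full $N^{-1}$. That is wrong at $\alpha=1$. It also contradicts the paper's own formula for $\lim \mathbb{E}[\Tr(C^k)]$, in which every multiplicity pattern survives at order one.
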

\begin{proof}
    We prove the theorem using the Wick's formula. For fixed integers $k_1,\dots,k_r\ge1$, consider the joint moment, we have 
    $$
    \mathbb{E}\left[\prod_{i=1}^r Z_N(k_i)\right].
    $$
    From \eqref{eq:ZN_circulant} and \eqref{eq:circ_trace_formula_clt}, we have 
    \begin{align*}
        &\prod_{i=1}^r Z_N(k_i)\\
        &=\prod_{i=1}^{r}\Bigg(\frac{1}{\sqrt{N}}\Tr(C_N^{k_i})-\mathbb{E}\left[\frac{1}{\sqrt{N}}\Tr(C_N^{k_i})\right]\Bigg)\\
&=N^{-r/2}\prod_{i=1}^{r}\bigg(\Tr(C_N^{k_i})
-
\mathbb{E}\Big[\Tr(C_N^{k_i})\Big]
\bigg)\\
&=N^{-r/2}\prod_{i=1}^{r}\Bigg(\frac{1}{N^{k_i/2-1}}
\sum_{\substack{j^{(i)}_1,\dots,j^{(i)}_{k_i}=0\\
j^{(i)}_1+\cdots+j^{(i)}_{k_i}\equiv0\ (\mathrm{mod}\ N)}}^{N-1}
\Big(
x_{j^{(i)}_1}\cdots x_{j^{(i)}_{k_i}}
-
\mathbb{E}[x_{j^{(i)}_1}\cdots x_{j^{(i)}_{k_i}}]
\Big)
\Bigg)\\
&=N^{-r/2}\frac{1}{N^{\frac{1}{2}(\sum_{i=1}^{r}k_i)-r}}\prod_{i=1}^{r}\Bigg(
\sum_{\substack{j^{(i)}_1,\dots,j^{(i)}_{k_i}=0\\
j^{(i)}_1+\cdots+j^{(i)}_{k_i}\equiv0\ (\mathrm{mod}\ N)}}^{N-1}
\Big(
x_{j^{(i)}_1}\cdots x_{j^{(i)}_{k_i}}
-
\mathbb{E}[x_{j^{(i)}_1}\cdots x_{j^{(i)}_{k_i}}]
\Big)
\Bigg)\\
&=N^{r/2}\frac{1}{N^{\frac{1}{2}(\sum_{i=1}^{r}k_i)}}\prod_{i=1}^{r}\Bigg(
\sum_{\substack{j^{(i)}_1,\dots,j^{(i)}_{k_i}=0\\
j^{(i)}_1+\cdots+j^{(i)}_{k_i}\equiv0\ (\mathrm{mod}\ N)}}^{N-1}
\Big(
x_{j^{(i)}_1}\cdots x_{j^{(i)}_{k_i}}
-
\mathbb{E}[x_{j^{(i)}_1}\cdots x_{j^{(i)}_{k_i}}]
\Big)
\Bigg)\\
&=N^{r/2}\frac{1}{N^{\frac{1}{2}(\sum_{i=1}^{r}k_i)}}\sum_{\substack{
j^{(i)}_1,\dots,j^{(i)}_{k_i}=0\\
j^{(i)}_1+\cdots+j^{(i)}_{k_i}\equiv0\ (\mathrm{mod}\ N)\\
1\le i\le r
}}^{N-1}
\prod_{i=1}^{r}
\Big(
x_{j^{(i)}_1}\cdots x_{j^{(i)}_{k_i}}
-
\mathbb{E}[x_{j^{(i)}_1}\cdots x_{j^{(i)}_{k_i}}]
\Big)\\
&=:N^{r/2}\frac{1}{N^{\frac{1}{2}(\sum_{i=1}^{r}k_i)}}\sum_{\substack{
j^{(i)}_1,\dots,j^{(i)}_{k_i}=0\\
j^{(i)}_1+\cdots+j^{(i)}_{k_i}\equiv0\ (\mathrm{mod}\ N)\\
1\le i\le r
}}^{N-1}
\prod_{i=1}^{r}
A_i\big(j^{(i)}_1,\dots,j^{(i)}_{k_i}\big),
    \end{align*}
where 
$$
A_i\big(j^{(i)}_1,\dots,j^{(i)}_{k_i}\big)=x_{j^{(i)}_1}\cdots x_{j^{(i)}_{k_i}}
-
\mathbb{E}[x_{j^{(i)}_1}\cdots x_{j^{(i)}_{k_i}}]
$$
This implies
\begin{align}\label{eq:expectation of product}
\mathbb{E}\left[\prod_{i=1}^r Z_N(k_i)\right]
&=
N^{r/2}\frac{1}{N^{\frac{1}{2}(\sum_{i=1}^{r}k_i)}}
\sum_{\substack{
j^{(i)}_1,\dots,j^{(i)}_{k_i}=0\\
j^{(i)}_1+\cdots+j^{(i)}_{k_i}\equiv0\ (\mathrm{mod}\ N)\\
1\le i\le r
}}^{N-1}
\mathbb{E}\Bigg[
\prod_{i=1}^{r}
A_i\big(j^{(i)}_1,\dots,j^{(i)}_{k_i}\big)
\Bigg].
\end{align}

This product contains $r$ centered trace blocks i.e., $Z_{N}(k_{i})$. Since $\mathbb{E}[x_{j}]=0$, only those configurations, where every random variable appears at least twice, have nontrivial contributions. If each block is independent of the rest of the blocks, then after fixing a configuration, we obtain $r$ disjoint blocks. In that case, because of the self centering of the blocks themselves, the contribution is zero. In fact, even if a single block is independent of all the others, its expectation vanishes due to centering, and hence the entire contribution is zero. Therefore, to obtain a non-zero contribution, each block must be linked, that is sharing a common random variable, to at least one other block. 

It is evident that the sum in the right hand side of \eqref{eq:expectation of product} can be re-expressed as a sum of different types of linked-block configurations. In particular, the sum contains all possible terms from $\sum_{\pi\in \mathcal{P}_{2}(r)}\prod_{(i,j)\in \pi}\mathbb{E}[Z_{N}(k_{i})Z_{N}(k_{j})].$ Additionally, it also contains the terms where more than two blocks are linked. However, as discussed below, the latter case does not contribute anything asymptotically. We now analyze all the possible cases as follows.

\textbf{Case 1 (Exactly two blocks are linked).}
In this case, we assume that each block is linked to exactly one other block. Consequently, $r$ must be even. Consider the following example with four chains.
$$
(x_{i_1}x_{i_2}x_{i_3}x_{i_4})(x_{j_1}x_{j_2}x_{j_3}x_{j_4})(x_{t_1}x_{t_2}x_{t_3}x_{t_4})(x_{s_1}x_{s_2}x_{s_3}x_{s_4}).
$$

\textbf{Case 1(a) (Complete overlap of the chains).}
We link the first chain with the second chain as follows.
\begin{figure}[h]
    \begin{center}
\begin{tikzpicture}

   \def\x{0}
   \def\y{0}
  \filldraw (\x, \y) circle (0.06);
   \node[anchor = south] at (\x, \y) {$x_{i_1}$};
  \filldraw (\x+1, \y) circle (0.06);
   \node[anchor = south] at (\x+1, \y) {$x_{i_2}$};
  \filldraw (\x+2, \y) circle (0.06);
   \node[anchor = south] at (\x+2, \y) {$x_{i_3}$};
  \filldraw (\x+3, \y) circle (0.06);
   \node[anchor = south] at (\x+3, \y) {$x_{i_4}$};
  % \filldraw (\x+4, \y) circle (0.06);
  %  \node[anchor = south] at (\x+4, \y) {$x_{i_5j_1}$};
  \filldraw (\x+5, \y) circle (0.06);
   \node[anchor = south] at (\x+5, \y) {$x_{j_1}$};
  \filldraw (\x+6, \y) circle (0.06);
   \node[anchor = south] at (\x+6, \y) {$x_{j_2}$};
  \filldraw (\x+7, \y) circle (0.06);
   \node[anchor = south] at (\x+7, \y) {$x_{j_3}$};
  \filldraw (\x+8, \y) circle (0.06);
  \node[anchor = south] at (\x+8, \y) {$x_{j_4}$};
  % \filldraw (\x+9, \y) circle (0.06);
  % \node[anchor = south] at (\x+9, \y) {$x_{i_{10}i_1}$};
 \draw[thick][red] (\x, \y) -- (\x, \y-0.5) -- (5+\x, \y-0.5) -- (\x+5, \y); 
 \draw[thick][blue] (\x+1, \y) -- (\x+1, \y-0.7) -- (6+\x, \y-0.7) -- (\x+6, \y);
 \draw[thick][violet] (\x+2, \y) -- (\x+2, \y-0.9) -- (7+\x, \y-0.9) -- (\x+7, \y);
  \draw[thick][teal] (\x+3, \y) -- (\x+3, \y-1.1) -- (8+\x, \y-1.1) -- (\x+8, \y);
  % \draw[thick][orange] (\x+4, \y) -- (\x+4, \y-1.3) -- (9+\x, \y-1.3) -- (\x+9, \y);
  \node at (4.50, -2.35) {Complete overlap};
  \end{tikzpicture}
  \end{center} 
  %\caption{Sequential intra chain merging}
   % \label{fig:k=10}
\end{figure}

That means we have
$$
i_1=j_1,\qquad i_2=j_2, \qquad i_3=j_3, \qquad i_4=j_4.
$$

From the modular constraints,
$$i_1+i_2+i_3+i_4=0 (\mathrm{mod}\ N), \qquad j_1+j_2+j_3+j_4=0 (\mathrm{mod}\ N),$$  we obtain

$$i_1+i_2+i_3+i_4=0 (\mathrm{mod}\ N).$$

Thus, we have 3 free indices under this type of linking. If every element in the first chain is paired with an element in the second chain, then the number of free indices is
$$
\text{(length of the chains)}-1
$$degree of freedom.

\textbf{Case 1(b) (Partial chain linking).}
Suppose we link the chains only partially (i.e., not a complete linking), for example, as follows.

\begin{figure}[h]
    \begin{center}
\begin{tikzpicture}

   \def\x{0}
   \def\y{0}
  \filldraw (\x, \y) circle (0.06);
   \node[anchor = south] at (\x, \y) {$x_{i_1}$};
  \filldraw (\x+1, \y) circle (0.06);
   \node[anchor = south] at (\x+1, \y) {$x_{i_2}$};
  \filldraw (\x+2, \y) circle (0.06);
   \node[anchor = south] at (\x+2, \y) {$x_{i_3}$};
  \filldraw (\x+3, \y) circle (0.06);
   \node[anchor = south] at (\x+3, \y) {$x_{i_4}$};
  % \filldraw (\x+4, \y) circle (0.06);
  %  \node[anchor = south] at (\x+4, \y) {$x_{i_5j_1}$};
  \filldraw (\x+5, \y) circle (0.06);
   \node[anchor = south] at (\x+5, \y) {$x_{j_1}$};
  \filldraw (\x+6, \y) circle (0.06);
   \node[anchor = south] at (\x+6, \y) {$x_{j_2}$};
  \filldraw (\x+7, \y) circle (0.06);
   \node[anchor = south] at (\x+7, \y) {$x_{j_3}$};
  \filldraw (\x+8, \y) circle (0.06);
  \node[anchor = south] at (\x+8, \y) {$x_{j_4}$};
  % \filldraw (\x+9, \y) circle (0.06);
  % \node[anchor = south] at (\x+9, \y) {$x_{i_{10}i_1}$};
 \draw[thick][red] (\x, \y) -- (\x, \y-0.5) -- (5+\x, \y-0.5) -- (\x+5, \y); 
 \draw[thick][blue] (\x+1, \y) -- (\x+1, \y-0.7) -- (6+\x, \y-0.7) -- (\x+6, \y);
 \draw[thick][violet] (\x+2, \y) -- (\x+2, \y-0.9) -- (3+\x, \y-0.9) -- (\x+3, \y);
  \draw[thick][teal] (\x+7, \y) -- (\x+7, \y-0.9) -- (8+\x, \y-0.9) -- (\x+8, \y);
  % \draw[thick][orange] (\x+4, \y) -- (\x+4, \y-1.3) -- (9+\x, \y-1.3) -- (\x+9, \y);
  \node at (4.50, -2.35) { Partial chain linking};
  \end{tikzpicture}
  \end{center} 
  %\caption{Sequential intra chain merging}
   % \label{fig:k=10}
\end{figure}
That means we have
$$
i_1=j_1,\qquad i_2=j_2, \qquad i_3=i_4, \qquad j_3=j_4.
$$

From the modular constraints
$$i_1+i_2+i_3+i_4=0 (\mathrm{mod}\ N), \qquad j_1+j_2+j_3+j_4=0 (\mathrm{mod}\ N),$$ we have

$$i_1+i_2+2i_3=0 (\mathrm{mod}\ N), \qquad i_1+i_2+2j_3=0 (\mathrm{mod}\ N).$$

In this case, we obtain only 2 degrees of freedom, which is strictly less than in the complete linking case. This is because partial linking introduces additional dependencies, thereby reducing the number of free variables.

\textbf{Case 1(c) (More than two random variables are equal).}
In the previous case 1(a) and case 1(b), we considered the case where exactly two random variables are equal. We now consider the case where more than two random variables are equal. For this, consider the following type of linking.
\begin{figure}[h]
    \begin{center}
\begin{tikzpicture}

   \def\x{0}
   \def\y{0}
  \filldraw (\x, \y) circle (0.06);
   \node[anchor = south] at (\x, \y) {$x_{i_1}$};
  \filldraw (\x+1, \y) circle (0.06);
   \node[anchor = south] at (\x+1, \y) {$x_{i_2}$};
  \filldraw (\x+2, \y) circle (0.06);
   \node[anchor = south] at (\x+2, \y) {$x_{i_3}$};
  \filldraw (\x+3, \y) circle (0.06);
   \node[anchor = south] at (\x+3, \y) {$x_{i_4}$};
  % \filldraw (\x+4, \y) circle (0.06);
  %  \node[anchor = south] at (\x+4, \y) {$x_{i_5j_1}$};
  \filldraw (\x+5, \y) circle (0.06);
   \node[anchor = south] at (\x+5, \y) {$x_{j_1}$};
  \filldraw (\x+6, \y) circle (0.06);
   \node[anchor = south] at (\x+6, \y) {$x_{j_2}$};
  \filldraw (\x+7, \y) circle (0.06);
   \node[anchor = south] at (\x+7, \y) {$x_{j_3}$};
  \filldraw (\x+8, \y) circle (0.06);
  \node[anchor = south] at (\x+8, \y) {$x_{j_4}$};
  % \filldraw (\x+9, \y) circle (0.06);
  % \node[anchor = south] at (\x+9, \y) {$x_{i_{10}i_1}$};
 \draw[thick][red] (\x, \y) -- (\x, \y-0.5) -- (5+\x, \y-0.5) -- (\x+5, \y); 
 \draw[thick][blue] (\x+1, \y) -- (\x+1, \y-0.7) -- (6+\x, \y-0.7) -- (\x+6, \y);
 \draw[thick][violet] (\x+2, \y) -- (\x+2, \y-0.9) -- (7+\x, \y-0.9) -- (\x+7, \y);
  \draw[thick][violet] (\x+3, \y) -- (\x+3, \y-0.9) -- (8+\x, \y-0.9) -- (\x+8, \y);
  % \draw[thick][orange] (\x+4, \y) -- (\x+4, \y-1.3) -- (9+\x, \y-1.3) -- (\x+9, \y);
  \node at (4.50, -2.35) {Not competently linked};
  \end{tikzpicture}
  \end{center} 
  %\caption{Sequential intra chain merging}
   % \label{fig:k=10}
\end{figure}
That means
$$
i_1=j_1,i_2=j_2, i_3=i_4=j_3=j_4
$$
In this case, we again obtain only $2$ degrees of freedom. The reason is that equating more variables creates additional dependencies among the indicesand consequently decreases the degrees of freedom. However, it should be noted that if a random variable appears with multiplicity more than two, then due to the nature of exploding moment, as mentioned in Condition \ref{condition:Circulant Ensemble with Exploding Moments}, one gets an exploding contribution of $N^{k/2-1}C_{k}.$ Nevertheless, this contribution can not compensate for the lost degree of freedom. Because every additional power of a random variable amounts to loss of one additional degree of freedom, which is a multiplication by $N^{-1}$. By contrast, from the exploding moment nature of the random variable, only $N^{1/2}$ is gained per additional power. Therefore overall, per additional constraint, it is a loss of $1/2$ degrees of freedom.

From this, we conclude that if two chains are linked, they must be linked completely to get the leading order contribution. Any other type of partial linking yields a strictly smaller contribution.

Therefore, in the following chains
$$
(x_{i_1}x_{i_2}x_{i_3}x_{i_4})(x_{j_1}x_{j_2}x_{j_3}x_{j_4})(x_{t_1}x_{t_2}x_{t_3}x_{t_4})(x_{s_1}x_{s_2}x_{s_3}x_{s_4}),
$$
if we overlap the first chain completely with any of the remaining chains, and the other two chains are overlapped completely, then each pair of chains contributes $3$ degrees of freedom, and hence the total number of degrees of freedom is $6.$

\textbf{Case 2 (Linking more than two chains).}

% Consider the following example with four chains.
% $$
% (x_{i_1}x_{i_2}x_{i_3}x_{i_4})(x_{j_1}x_{j_2}x_{j_3}x_{j_4})(x_{t_1}x_{t_2}x_{t_3}x_{t_4})(x_{s_1}x_{s_2}x_{s_3}x_{s_4}).
% $$

We now link more than two chains. Let us link the first three chains as shown above, while the fourth chain is linked within itself.

\begin{figure}[h]
    \begin{center}
\begin{tikzpicture}

   \def\x{0}
   \def\y{0}
  \filldraw (\x, \y) circle (0.06);
   \node[anchor = south] at (\x, \y) {$x_{i_1}$};
  \filldraw (\x+1, \y) circle (0.06);
   \node[anchor = south] at (\x+1, \y) {$x_{i_2}$};
  \filldraw (\x+2, \y) circle (0.06);
   \node[anchor = south] at (\x+2, \y) {$x_{i_3}$};
  \filldraw (\x+3, \y) circle (0.06);
   \node[anchor = south] at (\x+3, \y) {$x_{i_4}$};
  % \filldraw (\x+4, \y) circle (0.06);
  %  \node[anchor = south] at (\x+4, \y) {$x_{i_5j_1}$};
  \filldraw (\x+5, \y) circle (0.06);
   \node[anchor = south] at (\x+5, \y) {$x_{j_1}$};
  \filldraw (\x+6, \y) circle (0.06);
   \node[anchor = south] at (\x+6, \y) {$x_{j_2}$};
  \filldraw (\x+7, \y) circle (0.06);
   \node[anchor = south] at (\x+7, \y) {$x_{j_3}$};
  \filldraw (\x+8, \y) circle (0.06);
  \node[anchor = south] at (\x+8, \y) {$x_{j_4}$};
  \filldraw (\x+10, \y) circle (0.06);
  \node[anchor = south] at (\x+10, \y) {$x_{t_1}$};
  \filldraw (\x+11, \y) circle (0.06);
  \node[anchor = south] at (\x+11, \y) {$x_{t_2}$};
  \filldraw (\x+12, \y) circle (0.06);
  \node[anchor = south] at (\x+12, \y) {$x_{t_3}$};
  \filldraw (\x+13, \y) circle (0.06);
  \node[anchor = south] at (\x+13, \y) {$x_{t_4}$};
  % \filldraw (\x+9, \y) circle (0.06);
  % \node[anchor = south] at (\x+9, \y) {$x_{i_{10}i_1}$};
 \draw[thick][red] (\x, \y) -- (\x, \y-0.5) -- (5+\x, \y-0.5) -- (\x+5, \y); 
 \draw[thick][blue] (\x+1, \y) -- (\x+1, \y-0.7) -- (6+\x, \y-0.7) -- (\x+6, \y);
 \draw[thick][violet] (\x+2, \y) -- (\x+2, \y-0.9) -- (7+\x, \y-0.9) -- (\x+7, \y);
  \draw[thick][teal] (\x+3, \y) -- (\x+3, \y-1.1) -- (10+\x, \y-1.1) -- (\x+10, \y);
   \draw[thick][orange] (\x+8, \y) -- (\x+8, \y-0.5) -- (11+\x, \y-0.5) -- (\x+11, \y);
   \draw[thick][black] (\x+12, \y) -- (\x+12, \y-0.7) -- (13+\x, \y-0.7) -- (\x+13, \y);
  \node at (7.50, -2.35) {Three chain link};
  \end{tikzpicture}
  \end{center} 
  %\caption{Sequential intra chain merging}
   % \label{fig:k=10}
\end{figure}

\begin{figure}[H]
    \begin{center}
\begin{tikzpicture}

   \def\x{0}
   \def\y{0}
  \filldraw (\x, \y) circle (0.06);
   \node[anchor = south] at (\x, \y) {$x_{s_1}$};
  \filldraw (\x+1, \y) circle (0.06);
   \node[anchor = south] at (\x+1, \y) {$x_{s_2}$};
  \filldraw (\x+2, \y) circle (0.06);
   \node[anchor = south] at (\x+2, \y) {$x_{s_3}$};
  \filldraw (\x+3, \y) circle (0.06);
   \node[anchor = south] at (\x+3, \y) {$x_{s_4}$};
 \draw[thick][red] (\x, \y) -- (\x, \y-0.5) -- (1+\x, \y-0.5) -- (\x+1, \y); 
 %\draw[thick][blue] (\x+1, \y) -- (\x+1, \y-0.7) -- (6+\x, \y-0.7) -- (\x+6, \y);
 \draw[thick][violet] (\x+2, \y) -- (\x+2, \y-0.5) -- (3+\x, \y-0.5) -- (\x+3, \y);
  \node at (1.60, -1.35) { Forth chain};
  \end{tikzpicture}
  \end{center} 
  %\caption{Sequential intra chain merging}
   % \label{fig:k=10}
\end{figure}

From this type of linking, the reduced modular constraints are
$$
i_1+i_2+i_3+i_4=0\;(\mathrm{mod}\ N),\qquad i_1+i_2+i_3+j_4=0\;(\mathrm{mod}\ N), 
$$
$$
i_4+j_4+2t_3=0\;(\mathrm{mod}\ N), \qquad 2s_1+2s_2=0\;(\mathrm{mod}\ N).
$$

Therefore, this configuration yields only $4$ degrees of freedom, which is strictly less than in Case 1, where we had complete pairwise linking of two chains. The reason is that linking more than two chains introduces additional dependencies among the indices, thereby reducing the number of free variables. Hence, such configurations contribute strictly less than the leading order.

Therefore, from Case 1 and Case 2, we conclude that if more than two chains are connected through the pairing. For instance, consider a configuration in which three chains are linked together. In such a case, the indices from these chains become coupled through the pairing, which creates additional dependencies among them. As a result, several indices are forced to be equal, and the corresponding modular constraints are no longer independent. This reduces the number of free variables compared to the case where chains are paired disjointly. In such cases, the total number of free indices is strictly less than in the case where exactly two chains are linked completely. Consequently, the contribution from such configurations is of lower order. Therefore, these configurations contribute strictly less compared to the pairing case (i.e., when chains are connected in disjoint pairs).

\textbf{Exact degree of freedom in complete overlapping}

We now determine the exact degree of freedom in the case where exactly two chains are completely linked. Consider $r$ chains, each of the same length, given by

$$
(x_{i_1^{(1)}}x_{i_2^{(1)}}x_{i_3^{(1)}}\dots x_{i_{k_1}^{(1)}})(x_{i_1^{(2)}}x_{i_2^{(2)}}x_{i_3^{(2)}}\dots x_{i_{k_1}^{(2)}})\dots (x_{i_1^{(r)}}x_{i_2^{(r)}}x_{i_3^{(r)}}\dots x_{i_{k_1}^{(r)}}).
$$

Suppose that the chains are paired and each pair is completely linked. Then the reduced modular constraints become

\begin{align*}
&i_1^{(1)}+i_2^{(1)}+\cdots+i_{k_1}^{(1)} \equiv 0 \ (\mathrm{mod}\ N),\\
&i_1^{(2)}+i_2^{(2)}+\cdots+i_{k_1}^{(2)} \equiv 0 \ (\mathrm{mod}\ N),\\
&\vdots \\
&i_1^{(r/2)}+i_2^{(r/2)}+\cdots+i_{k_1}^{(r/2)} \equiv 0 \ (\mathrm{mod}\ N).
\end{align*}
Thus, each pair contributes $(k_1 - 1)$ degrees of freedom. Therefore, the total number of degrees of freedom is 
$$
(k_1-1)+(k_2-1)+\dots (k_{r/2}-1).
$$
From \eqref{eq:expectation of product}, the corresponding exponent of $N$ is 
$$
N^{r/2}N^{-\frac{1}{2}(\sum_{i=1}^{r}k_i)}N^{(k_1-1)+(k_2-1)+\dots (k_{r/2}-1)}
$$
which is of order $O(1),$ since all chains have the same length $k_1.$ Therefore, when exactly two chains are completely linked (i.e., chains are paired), the contribution is of constant order.

On the other hand, as shown in Case 1 and Case 2, any configuration in which more than two chains are linked leads to fewer degrees of freedom. In such cases, the total number of free indices is strictly less than 
$$\frac{1}{2}\sum_{i=1}^r k_i-\frac{r}{2}.$$ Consequently, all configurations other than pairwise linking contribute at most
$$
O(N^{-\epsilon}), \epsilon>0
$$

Thus
\begin{align*}
\mathbb{E}\left[\prod_{i=1}^{2m} Z_N(k_i)\right]
=
\sum_{\pi\in\mathcal{P}_2(2m)}
\prod_{(i,j)\in\pi}
\mathbb{E}[Z_N(k_i)Z_N(k_j)]+o(1).
\end{align*}

%%%%%%%%%%%%%%%%%%%%%%%%%%%%%%%%%%%%%%%%%%%%%%%%%%%%%%%%%%%%%%%%%%%%%%%%%%%%%%%%%%%%%%%%%%%%%%%      Calculation of the covariance kernel              %%%%%%%%%%%%%%%%%%%%%%%%%%%%%%%%%%%%%%%%%%%%%%%%%%%%%%%%%%%%%%%%%%%%%%%%%%%%%%%%%%%%%%%%%%

\textbf{Calculation of the covariance kernel}

From \eqref{eq:expectation of product}, taking $r=2$, we obtain
\begin{align}
&\mathbb{E}[Z_N(k)Z_N(l)]\\
&=
N\frac{1}{N^{(k+l)/2}}
\sum_{\substack{
j_1,\dots,j_k=0\\
j_1+\cdots+j_k=0\ (\mathrm{mod}\ N)
}}^{N-1}
\sum_{\substack{
i_1,\dots,i_l=0\\
i_1+\cdots+i_l=0\ (\mathrm{mod}\ N)
}}^{N-1}
\mathbb{E}\Big[
A_1(j_1,\dots,j_k)\,A_2(i_1,\dots,i_l)
\Big].
\end{align}
Now the expectation is nonzero only when each index among
$$
\{j_1,\dots,j_k,i_1,\dots,i_l\}
$$
appears exactly twice.

\textit{Case 1: $k \neq l$.} In this case, it is not possible to pair all indices between the two blocks so that each index appears exactly twice. Hence,
$$
\mathbb{E}[Z_{N}(k)Z_{N}(l)] = 0.
$$
\textit{Case 2: $k = l$.} In this case, the only contributing configurations are those in which each index from the first block is paired with exactly one index from the second block. Such configurations correspond to bijections between $\{1,\dots,k\}$ and $\{1,\dots,k\}$, and their number is $k!$. Therefore, we have
% \begin{align}
% &\mathbb{E}[Z_N(k)Z_N(l)]\\
% &=
% \frac{1}{N^{k-1}}
% \sum_{\substack{
% i_1,\dots,i_l=0\\
% i_1+\cdots+i_l=0\ (\mathrm{mod}\ N)
% }}^{N-1}
% \mathbb{E}\Big[
% A_1(i_1,\dots,i_k)A_2(i_1,\dots,i_k)
% \Big].
% \end{align}

\begin{align}
\mathbb{E}[Z_{N}(k)Z_{N}(l)]
=
\begin{cases}
k! \, (\mathbb{E}[x_0^2])^k + o(1), & \text{if } k=l, \\
0, & \text{if } k \neq l.
\end{cases}
\end{align}
\end{proof}

We believe that, by using techniques similar to those discussed here for circulant matrices, we can also study the spectra of reverse circulant matrices, symmetric circulant matrices, Toeplitz matrices, and Hankel matrices.

\subsection*{Acknowledgment} Indrajit Jana's research is partially supported by INSPIRE Fellowship\\DST/INSPIRE/04/2019/000015, Dept. of Science and Technology, Govt. of India.\\

Sunita Rani's research is fully supported by the University Grant Commission (UGC), New Delhi.

\bibliographystyle{abbrv} % We choose the "abbrv" reference style
\bibliography{MasterBib} % Entries are in the MasterBib.bib file

\begin{thebibliography}{10}

\bibitem{adhikari2017fluctuations}
K.~Adhikari and K.~Saha.
\newblock Fluctuations of eigenvalues of patterned random matrices.
\newblock {\em Journal of Mathematical Physics}, 58(6):063301, 2017.

\bibitem{adhikari2018universality}
K.~Adhikari and K.~Saha.
\newblock Universality in the fluctuation of eigenvalues of random circulant matrices.
\newblock {\em Statistics \& Probability Letters}, 138:1--8, 2018.

\bibitem{BenaychGeorgesGuionnet2014}
F.~Benaych-Georges and A.~Guionnet.
\newblock Central limit theorem for eigenvectors of heavy tailed matrices.
\newblock {\em Electronic Journal of Probability}, 19:1--27, 2014.

\bibitem{benaych2014central}
F.~Benaych-Georges, A.~Guionnet, and C.~Male.
\newblock Central limit theorems for linear statistics of heavy tailed random matrices.
\newblock {\em Communications in Mathematical Physics}, 329(2):641--686, 2014.

\bibitem{benaych2016fluctuations}
F.~Benaych-Georges and A.~Maltsev.
\newblock Fluctuations of linear statistics of half-heavy-tailed random matrices.
\newblock {\em Stochastic Processes and their Applications}, 126(11):3331--3352, 2016.

\bibitem{jana2025cltles}
I.~Jana and S.~Rani.
\newblock Clt for les of correlated non-hermitian random matrices, 2025.
\newblock arXiv preprint arXiv:2503.22542.

\bibitem{jana2024spectrum}
I.~Jana and S.~Rani.
\newblock Spectrum of random centrosymmetric matrices; clt and circular law.
\newblock {\em Random Matrices: Theory and Applications}, 14(01):2450026, 2025.

\bibitem{male2017limiting}
C.~Male.
\newblock The limiting distributions of large heavy wigner and arbitrary random matrices.
\newblock {\em Journal of Functional Analysis}, 272(1):1--46, 2017.

\bibitem{o2016central}
S.~O’Rourke and D.~Renfrew.
\newblock Central limit theorem for linear eigenvalue statistics of elliptic random matrices.
\newblock {\em Journal of Theoretical Probability}, 29:1121--1191, 2016.

\bibitem{weaver1985centrosymmetric}
J.~R. Weaver.
\newblock Centrosymmetric (cross-symmetric) matrices, their basic properties, eigenvalues, and eigenvectors.
\newblock {\em The American Mathematical Monthly}, 92(10):711--717, 1985.

\bibitem{west2001introduction}
D.~B. West et~al.
\newblock {\em Introduction to graph theory}, volume~2.
\newblock Prentice hall Upper Saddle River, 2001.

\end{thebibliography}

\end{document}